\documentclass[letterpaper,11pt]{article}

\usepackage{amsmath,amsfonts,amssymb,amsthm,mathrsfs,dsfont}
\usepackage{cases}
\usepackage{enumerate}
\usepackage[usenames,dvipsnames]{color}
\usepackage{verbatim} % for \begin{comment} ... \end{comment}
\usepackage{array} % for tables
\usepackage{graphicx,epstopdf} % for inserting figures
\usepackage{mathtools}
%%%  This next one seems to cause trouble - J
% \usepackage[colorlinks=true,linkcolor=blue,citecolor=blue,filecolor=blue,urlcolor=blue]{hyperref} % for hyperlinks in the TOC and with the commands \url, \ref, \href, \cite, etc.

\theoremstyle{plain} \newtheorem{thm}{Theorem}[section]
\theoremstyle{plain} \newtheorem{lemma}[thm]{Lemma}
\theoremstyle{plain} \newtheorem{prop}[thm]{Proposition}
\theoremstyle{plain} \newtheorem{cor}[thm]{Corollary}
\theoremstyle{plain} 
\theoremstyle{plain} \newtheorem{question}[thm]{Question}
\theoremstyle{definition} 
\theoremstyle{definition} 
\theoremstyle{plain} 

%%%%%%%%%%%%%%%%%%%%%%%%%%%%%%%%%%%%%%%%%%%%%%%%%%%%  Jeff's
%%%%%%%%%%%%%%%%%%%%%%%%%%%%%%%%%%%%%%%%%%%%%%%%%%%%

\newcommand{\red}[1]{\textcolor{Red}{#1}}

\newcommand{\sub}[0]{\subseteq}
\newcommand{\sm}[0]{\setminus}
\renewcommand{\dots}[0]{,\ldots,}
\newcommand{\ra}[0]{\rightarrow}

\newcommand{\ov}[0]{\overline}

\newcommand{\beq}[1]{\begin{equation}\label{#1}}
\newcommand{\enq}[0]{\end{equation}}

\newcommand{\bn}[0]{\bigskip\noindent}
\newcommand{\mn}[0]{\medskip\noindent}
\newcommand{\nin}[0]{\noindent}

\newcommand{\0}[0]{\varnothing}
\newcommand{\E}[0]{{\mathbb{E}}}

\newcommand{\Cc}[0]{\tbinom}
\newcommand{\ga}[0]{\alpha }
\newcommand{\gb}[0]{\beta }
\newcommand{\gc}[0]{\gamma }
\newcommand{\gd}[0]{\delta }
\newcommand{\gD}[0]{\Delta }
\newcommand{\gG}[0]{\Gamma }

\newcommand{\gl}[0]{\lambda }
\newcommand{\gL}[0]{\Lambda}
\newcommand{\go}[0]{\omega}
\newcommand{\gO}[0]{\Omega}

\newcommand{\gs}[0]{\sigma}

\newcommand{\gz}[0]{\zeta}
\newcommand{\eps}[0]{\varepsilon }
\newcommand{\vt}[0]{\vartheta}
\newcommand{\vs}[0]{\varsigma}

\newcommand{\cee}[0]{{\cal C}}
\newcommand{\D}[0]{{\cal D}}
\newcommand{\eee}[0]{{\cal E}}

\newcommand{\pee}[0]{{\cal P}}
\newcommand{\Q}[0]{{\cal Q}}
\newcommand{\R}[0]{{\cal R}}
\newcommand{\sss}[0]{{\cal S}}
\newcommand{\T}[0]{{\cal T}}
\newcommand{\U}[0]{{\cal U}}

\newcommand{\W}[0]{{\cal W}}

\newcommand{\q}[0]{q}

%%%%%%%%%%%%%%%%%%%%%%%%%%%%%%%%%%%%%%%%%%%%%%%%%%%%
%%%%%%%%%%%%%%%%%%%%%%%%%%%%%%%%%%%%%%%%%%%%%%%%%%%%

\author{Jacob D. Baron\thanks{Department of Mathematics, Rutgers University, Piscataway, NJ. 
Supported by the U.S. Department of Homeland Security under Grant Award 2012-ST-104-000044. The views and conclusions contained in this document are those of the authors and should not be interpreted as necessarily representing the official policies, either express or implied, of the U.S. Department of Homeland Security.} \and Jeff Kahn\thanks{Department of Mathematics, Rutgers University, Piscataway, NJ. Supported by the National Science Foundation under Grant Awards DMS1201337 and DMS1501962.} }

\title{On the Cycle Space of a Random Graph}

\date{Sept 2016}

%%%%%%%%%%%%%%%%%%%%%%%%%%%%%%%%%%%%%%%%%%%%%%%%%%%

\begin{document}

\maketitle

\begin{abstract}
Write $\cee(G)$ for the cycle space of a graph $G$, $\cee_\kappa(G)$ for the subspace of $\cee(G)$ spanned by the copies of the $\kappa$-cycle $C_\kappa$ in $G$, $\T_\kappa$ for the class of graphs satisfying $\cee_\kappa(G)=\cee(G)$, and $\Q_\kappa$ for the class of graphs each of whose edges lies in a $C_\kappa$. We prove that for every odd $\kappa \geq 3$ and $G=G_{n,p}$, \[\max_p \, \Pr(G \in \Q_\kappa \setminus \T_\kappa) \rightarrow 0;\] so the $C_\kappa$'s of a random graph span its cycle space as soon as they cover its edges. For $\kappa=3$ this was shown in \cite{DHK}.
\end{abstract}

\section{Introduction}\label{Intro}

An issue of considerable interest in combinatorics over the last few decades
has been the extent to which
various standard facts,
for instance the classic theorems of Tur\'an, Ramsey and Szemer\'edi, remain true
in a ``sparse random'' setting. Thus, for example,
one may ask for which $p$ a given (deterministic) assertion regarding the complete graph $K_n$
is likely to hold in the (``Bernoulli") random graph $G_{n,p}$. Our main result
follows this theme.

\medskip
Our underlying deterministic statement is Proposition~\ref{prop:C5Kn}
below, for which we need a few definitions. The \emph{edge space} of a graph $G$,
denoted $\mathcal{E}(G)$, is the vector space $\mathbb{F}_2^{E(G)}$.
Its elements are naturally identified with the (spanning) subgraphs of $G$. The \emph{cycle space}
of $G$, denoted $\mathcal{C}(G)$, is the subspace of $\mathcal{E}(G)$ generated by the
(indicators of) cycles of $G$ (see e.g. \cite[Sec.\ 1.9]{Diestel} for an exposition).
For a  fixed graph $H$, the $H$-{\em space} of $G$ is the subspace of
$\mathcal{E}(G)$ generated by the copies of $H$ in $G$; this will be denoted
$\cee_H(G)$, or simply $\cee_\kappa(G)$ if $H=C_\kappa$ (the $\kappa$-cycle or $\kappa$-\emph{gon}).

\begin{prop}
\label{prop:C5Kn}
If $\kappa \geq 3$ is odd, then for any $n \geq \kappa$, $\cee_\kappa(K_n) = \mathcal{C}(K_n)$.
\end{prop}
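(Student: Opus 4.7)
My plan is to proceed by induction on odd $\kappa \geq 3$.  For the base case $\kappa = 3$, I would invoke the standard fact that triangles span $\cee(K_n)$ for $n \geq 3$: any cycle $v_1 v_2 \cdots v_\ell v_1$ equals the sum of the fan triangles $v_1 v_i v_{i+1}$ for $2 \leq i \leq \ell - 1$.

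For the inductive step, assuming the proposition for $\kappa - 2 \geq 3$, I would use that $\cee_{\kappa-2}(K_n) = \cee(K_n)$ for all $n \geq \kappa - 2$.  It then suffices to show $\cee_{\kappa-2}(K_n) \sub \cee_\kappa(K_n)$ for $n \geq \kappa$, i.e., that every $(\kappa-2)$-cycle of $K_n$ is a sum of $\kappa$-cycles.  Since $S_n$ acts on $K_n$ transitively on $(\kappa-2)$-cycles and preserves $\cee_\kappa(K_n)$ (automorphisms send $\kappa$-cycles to $\kappa$-cycles), it is enough to exhibit a single such sum.

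For the construction, I would pick any $\kappa$ vertices of $K_n$ (possible since $n \geq \kappa$), label them $1, 2, \ldots, \kappa$, and take the three Hamilton cycles of the induced $K_\kappa$
\begin{align*}
H_1 &= (1, 2, \ldots, \kappa-2, \kappa-1, \kappa), \\
H_2 &= (1, 2, \ldots, \kappa-2, \kappa, \kappa-1), \\
H_3 &= (1, 2, \ldots, \kappa-3, \kappa-1, \kappa-2, \kappa),
\end{align*}
each a $\kappa$-cycle in $K_n$.  All three contain every edge of the common path $1, 2, \ldots, \kappa-3$, so those $\kappa-4$ edges each appear three times in the sum and hence survive.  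Among the remaining edges on the vertices $\{\kappa-3, \kappa-2, \kappa-1, \kappa, 1\}$, a routine check shows that only $(\kappa-3)(\kappa-1)$ and $(\kappa-1)1$ occur an odd number of times, while the other five edges each occur exactly twice.  Thus $H_1 + H_2 + H_3$ is precisely the $(\kappa-2)$-cycle $(1, 2, \ldots, \kappa-3, \kappa-1)$.

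The main obstacle, such as it is, lies in finding the right three Hamilton cycles; the ansatz of having them agree on a long common path and differ only by permuting the last three symbols $\kappa-2, \kappa-1, \kappa$ is the crucial insight (it forces the pairwise edge-cancellations among those symbols).  Everything else — the base case, the $S_n$-symmetry reduction, and the edge bookkeeping in the sum — is essentially mechanical.
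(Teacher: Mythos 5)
Your proof is correct, but it takes a genuinely different route from the paper's. The paper derives Proposition~\ref{prop:C5Kn} as a special case of the more general Theorem~\ref{thm:Hspace} (proved in Section~\ref{SEC:ThmHspace}): there one first shows that the sum of two $\kappa$-cycles on the same vertex set sharing a $P_{\kappa-3}$ is a $C_4$, giving $\cee_4(K_n)\sub\cee_\kappa(K_n)$; one then invokes the identity $\cee_4(K_n)=\cee(K_n)\cap\D(K_n)$, notes that $C_\kappa\notin\D(K_n)$ when $\kappa$ is odd, and concludes by codimension-one considerations that $\cee_\kappa(K_n)=\cee(K_n)$. Your argument instead inducts on $\kappa$ directly: the base case $\kappa=3$ is the standard fan decomposition, and the inductive step exhibits a single $(\kappa-2)$-cycle as a sum of three $\kappa$-cycles (your $H_1+H_2+H_3$ computation is correct; I verified the cancellation pattern both in general and for small cases) and then uses $S_n$-transitivity on $(\kappa-2)$-cycles. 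The paper's route is shorter given the $\cee_4$ machinery and has the advantage of yielding the full classification of $\cee_H(K_n)$ at once; your route is more elementary and self-contained, dispensing with both the $\cee_4=\cee\cap\D$ identification and the dimension/parity bookkeeping, at the price of being tailored to odd cycles.
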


\nin
Of course for even $\kappa$, $\cee_\kappa(G)$ is at most the space spanned by \emph{even} cycles.
Below, in Theorem \ref{thm:Hspace}, we will characterize $\cee_H(K_n)$ for any
fixed $H$ and large enough $n$.

Assuming $\kappa$ is odd,
when, in terms of $p$ ($=p(n)$), are the $\kappa$-gons of $G_{n,p}$ likely to span its cycle space?
Let $\mathcal{T}_\kappa$
be the class of graphs $G$ satisfying $\cee_{\kappa}(G) = \mathcal{C}(G)$
and let $\mathcal{Q}_\kappa$ be the class of nonempty graphs each of whose edges
lies in a copy of $C_\kappa$. For any $G$,
it's easy to see that $G \notin \mathcal{T}_\kappa$ unless every edge of $G$ that lies in a cycle
in fact lies in a $\kappa$-gon. On the other hand, if $p> (1+\gO(1))\log n/n$ then
w.h.p.\footnote{W.h.p. (``with high probability'') means with probability tending to 1 as
$n \rightarrow \infty$.}
every edge of $G_{n,p}$
does lie in a cycle (e.g. \cite[p.\ 105]{JLR}).  So for such $p$, $G_{n,p} \in \mathcal{T}_\kappa$
w.h.p. at least requires $G_{n,p} \in \Q_\kappa$ w.h.p., and
we should first understand when this is true.
Let
\begin{align}
\label{eq:p^*intro}
p^*_\kappa = p^*_\kappa(n) = [(\kappa/(\kappa-1))n^{-(\kappa-2)}\log n]^{1/(\kappa-1)}
\end{align}
(where we always use $\log$ for $\ln$).
Note $\Q_\kappa$ is not an \emph{increasing} property---that is, it is not preserved by adding edges.
Nonetheless, $p^*_\kappa$ is a \emph{sharp threshold} for $\mathcal{Q}_\kappa$,
in the sense that:

\begin{lemma}
\label{lem:Qsharpthresh}
For any fixed $\kappa \geq 3$ and $\eps>0$,
\begin{align}
\label{eq:lemQsharpthresh}
\Pr(G_{n,p} \in \mathcal{Q}_{\kappa}) \rightarrow
\begin{cases}
0 &\text{ if } p < (1-\eps)p^*_\kappa, \\
1 &\text{ if } p > (1+\eps)p^*_\kappa.
\end{cases}
\end{align}
\end{lemma}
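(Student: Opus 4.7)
I would study the random variable
\[
X := \#\{uv \in E(G) : \text{no copy of } C_\kappa \text{ in } G \text{ contains } uv\},
\]
the number of edges of $G=G_{n,p}$ lying in no $\kappa$-gon. Since $G \in \Q_\kappa$ iff $G\neq\emptyset$ and $X=0$ (and $G\neq\emptyset$ w.h.p.\ for the relevant range of $p$), the lemma reduces to estimating $\Pr(X=0)$. A fixed pair $uv$ lies in exactly $N := (n-2)(n-3)\cdots(n-\kappa+1) \sim n^{\kappa-2}$ potential copies of $C_\kappa$ in $K_n$ (pick $\kappa-2$ further vertices and one of the $(\kappa-2)!$ Hamilton cycles through $uv$ on $\{u,v\}$ plus those vertices). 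Setting $\mu := N p^{\kappa-1}$, at $p = c p^*_\kappa$ one has $\mu = c^{\kappa-1}\tfrac{\kappa}{\kappa-1}\log n$, while $\log\bigl(\binom{n}{2}p\bigr) = \tfrac{\kappa}{\kappa-1}\log n + O(\log\log n)$. If $\Pr(uv\text{ in no }C_\kappa \mid uv\in G) = e^{-(1+o(1))\mu}$ (Poisson heuristic, to be justified), then
\[
\log \E[X] = (1+o(1))\tfrac{\kappa}{\kappa-1}(1 - c^{\kappa-1})\log n,
\]
which crosses zero exactly at $c=1$.

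For the upper half ($p > (1+\eps)p^*_\kappa$), I would condition on $uv\in G$ and apply Janson's inequality to the $N$ monotone events $B_F := \{F \subseteq G\}$, indexed by potential $C_\kappa$'s $F \ni uv$, each determined by the $\kappa - 1$ edges of $E(F)\setminus\{uv\}\subseteq E(G)\setminus\{uv\}$. This yields
\[
\Pr(uv\text{ in no }C_\kappa \mid uv\in G) \leq \exp(-\mu + \Delta/2),
\]
with $\Delta$ summed over ordered pairs $F \neq F'$ sharing some edge besides $uv$. A routine enumeration, classifying $F \cap F'$ by its combinatorial shape as a disjoint union of paths, shows $\Delta = o(\mu)$ (the dominant contributions come from $F, F'$ sharing a single extra edge, giving $O(\mu^2/(np)) = o(\mu)$ since $np\to\infty$). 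The first moment then gives $\E[X] \to 0$.

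For the lower half ($p < (1-\eps) p^*_\kappa$), Chebyshev's inequality suffices. FKG applied in the edges of $G \setminus \{uv\}$ gives the matching lower bound $\Pr(uv\text{ in no }C_\kappa \mid uv\in G) \geq (1-p^{\kappa-1})^N \geq e^{-(1+o(1))\mu}$, so $\E[X] \to \infty$. For $\mathrm{Var}(X)$ one bounds the covariances $\mathrm{Cov}(\mathbf{1}_{A_e}, \mathbf{1}_{A_{e'}})$ of $A_e := \{e \in E(G),\ e \text{ in no } C_\kappa\}$ by splitting pairs $(e, e')$ according to whether any potential $C_\kappa$ contains both: pairs whose ``cycle-neighborhoods'' are nearly edge-disjoint have negligible covariance, and the few ``close'' pairs admit crude bounds.

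The main obstacle is the combinatorial bookkeeping of overlap patterns, both in $\Delta = o(\mu)$ for Janson and in the covariance sum for Chebyshev. Both reduce to finite sums of the form $\sum_t (\#\text{overlapping pairs of shape }t)\cdot p^{(\#\text{extra edges})}$, stratified by the shape $t$ of the overlap (a disjoint union of paths), and each term must be checked to be of smaller order than the main term; this is elementary but tedious.
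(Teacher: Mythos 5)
Your approach is essentially the same as the paper's: Janson's inequality for the $1$-statement and a second-moment (Chebyshev) argument with the Harris/FKG inequality supplying the matching lower bound for the $0$-statement; the paper phrases things in terms of $(\kappa-1)$-paths from $x$ to $y$ rather than $C_\kappa$'s through $uv$ conditional on $uv\in G$, but that is the same computation.

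One slip worth flagging: your justification that $\Delta = O(\mu^2/(np)) = o(\mu)$ ``since $np\to\infty$'' is wrong. What is needed is $\mu/(np) = n^{\kappa-3}p^{\kappa-2} = o(1)$, which holds when $p = O(p^*_\kappa)$ but fails for larger $p$ (e.g.\ $\kappa = 5$, $p = n^{-1/2}$), so the estimate $\Delta=o(\mu)$ is \emph{not} valid across the entire range $p>(1+\eps)p^*_\kappa$. The paper explicitly restricts the Janson computation to $p = O(p^*_\kappa)$ and disposes of larger $p$ by citing Theorem~\ref{thm:Spencer} (Spencer's path-count concentration, which already gives that every pair is joined by many $(\kappa-1)$-paths). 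Alternatively, one can keep Janson for all $p$ by using the stronger form $\exp[-\mu^2/\overline{\Delta}]$, since for large $p$ one has $\mu^2/\overline{\Delta} = \Theta(np) \gg \log n$, which still beats the union bound over $\binom{n}{2}p$ edges; but some separate argument or more careful bookkeeping is required, and the phrase ``since $np\to\infty$'' on its own does not supply it.
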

\noindent
(Throughout the paper limits are taken as $n\ra\infty$.)
We prove this routine observation in Section \ref{SEC:LemmaThresh}.
The cases in \eqref{eq:lemQsharpthresh} are called the \emph{0-statement}
and the \emph{1-statement} (respectively).

Given Lemma \ref{lem:Qsharpthresh}, one might hope that $p^*_\kappa$ is also a sharp
threshold for $\mathcal{T}_\kappa$, and it essentially is, but for a small glitch in the
0-statement: for $p< (1-\gO(1))/n$,
we have $\lim\Pr(G_{n,p} \in \mathcal{T}_\kappa)>0$
for
the silly reason that the probability of having no cycles at all
is (asymptotically) positive (see e.g. \cite[Thm. 1]{Pittel}). Thus we will show:

\begin{thm}
\label{thm:C5threshold}
For any fixed odd $\kappa \geq 3$ and $\eps>0$,
\begin{align*}
\label{eq:lemQsharpthresh}
\Pr(G_{n,p} \in \mathcal{T}_{\kappa}) \rightarrow
\begin{cases}
0 &\text{ if } (1-o(1))/n < p < (1-\eps)p^*_\kappa, \\
1 &\text{ if } p > (1+\eps)p^*_\kappa.
\end{cases}
\end{align*}
\end{thm}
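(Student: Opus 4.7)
For the \emph{0-statement}, the key observation is that if an edge $e \in E(G_{n,p})$ lies in some cycle $C$ but in no $\kappa$-cycle, then $C \in \cee(G_{n,p})\setminus\cee_\kappa(G_{n,p})$, since every element of $\cee_\kappa$ has support avoiding $e$. Lemma~\ref{lem:Qsharpthresh} already produces, w.h.p., an edge of $G_{n,p}$ in no $\kappa$-cycle; the additional content needed is that such an edge can be chosen not to be a bridge. Let $X$ count edges of $G_{n,p}$ in no $\kappa$-cycle and $Y$ count bridges. A refinement of the Poisson-style first-moment estimate behind Lemma~\ref{lem:Qsharpthresh} shows $\E[X]$ grows polynomially in $n$ for $p<(1-\eps)p^*_\kappa$, with concentration $X=(1+o(1))\E[X]$ w.h.p.\ via Chebyshev (the events ``$e$ in no $\kappa$-cycle'', for distant $e$'s, are essentially independent). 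In the range $p>(1-o(1))/n$, standard random-graph estimates give $\E[Y] = o(\E[X])$ (and indeed $Y=0$ w.h.p.\ above the 2-edge-connectivity threshold), so $X>Y$ w.h.p.

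For the \emph{1-statement}, fix $p>(1+\eps)p^*_\kappa$; the goal is $\cee_\kappa(G)=\cee(G)$ w.h.p.\ for $G=G_{n,p}$. Proposition~\ref{prop:C5Kn} is the underlying deterministic fact: in $K_n$ every cycle is an $\mathbb{F}_2$-sum of $\kappa$-cycles. The task is to carry out such decompositions using only $\kappa$-cycles actually lying in $G$. I would first record, via Janson's inequality (strengthening Lemma~\ref{lem:Qsharpthresh}), a \emph{typicality} event holding w.h.p.: every edge of $G$ lies in $n^{\gO(1)}$ copies of $C_\kappa$, and for every pair $\{u,v\}$ of vertices the number of $uv$-paths of length $\kappa-1$ in $G$ is concentrated around its (large) mean. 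Given typicality, fix any cycle $C$ of $G$ and apply Proposition~\ref{prop:C5Kn} in $K_n$ to write $C = K_1 + \cdots + K_m$ with each $K_i$ a $\kappa$-cycle of $K_n$. Each ``bad'' edge $b \in E(K_n)\setminus E(G)$ in this sum appears in an even number of $K_i$'s (since $C\subset E(G)$), so bad edges come in pairs; for each pair $(K_i,K_j)$ sharing $b=uv$ one picks a $uv$-path $P\subset G$ of length $\kappa-1$ and adds the $K_n$-$\kappa$-cycle $\{b\}\cup P$ to both $K_i$ and $K_j$, which deletes $b$ and inserts $P$ in each. Iterating eliminates all bad edges, expressing $C$ as a sum of cycles of $G$; a further length-reduction step, again using typicality to trade length against $\kappa$-cycles of $G$, completes the expression of $C$ as a sum of $\kappa$-cycles of $G$.

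The \emph{main obstacle} is the global consistency of the repair choices: the paths $P$ chosen for different bad edges must not interact in ways that spoil later cancellations or the length-reduction step. The cleanest way to decouple these choices is a two-round (\emph{sprinkling}) exposure $G = G^{(1)}\cup G^{(2)}$, each of density of order $p$: use $G^{(1)}$ to fix $C$ and its $K_n$-level decomposition, and use the fresh randomness of $G^{(2)}$ to sample disjoint repair paths (and reduction steps) one by one, with a Janson-type tail at each step guaranteeing availability of a path avoiding those previously chosen. This mirrors the triangle-based strategy of \cite{DHK} for $\kappa=3$; the essential new ingredient is the combinatorics of $\kappa$-cycle re-decomposition, which replaces the simpler triangle algebra.
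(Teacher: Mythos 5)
Your proposal and the paper take fundamentally different routes, and there are genuine gaps on both sides of your argument.

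\textbf{The $1$-statement.} The paper does not prove this directly at all: Theorem~\ref{thm:C5threshold} is derived in a couple of lines from Theorem~\ref{thm:C5precise} (the paper's main theorem). Namely, for $p>(1+\eps)p^*_\kappa$ the $1$-statement of Lemma~\ref{lem:Qsharpthresh} gives $\Pr(G\in\Q_\kappa)\to1$, while Theorem~\ref{thm:C5precise} gives $\Pr(G\in\Q_\kappa\sm\T_\kappa)\to0$; combining the two yields $\Pr(G\in\T_\kappa)\to1$. All of the real work is in the proof of Theorem~\ref{thm:C5precise}, which proceeds along entirely different lines: one introduces a canonical smallest witness $F\in\cee_\kappa^\perp(G)\sm\cee^\perp(G)$, rules out large $F$ via the Conlon--Gowers sparse stability theorem, and rules out small nonempty $F$ via a delicate two-round coupling. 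Your proposed ``start from a $K_n$-decomposition of an arbitrary cycle $C$ and repair bad edges by swapping in $G$-paths'' is not how the paper argues, and the obstacle you flag (global consistency of the repairs) is real and not resolved by your sketch. In particular $C$ can be a Hamilton cycle, the number of bad edges can be enormous, and you must succeed for \emph{every} cycle $C$ simultaneously; a step-by-step sprinkling with a Janson tail at each step does not obviously survive the necessary union bounds. As written this is a heuristic, not a proof.

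\textbf{The $0$-statement.} Your starting observation is exactly right and matches the paper: it suffices to exhibit an edge that lies in some cycle but in no $C_\kappa$. However, your quantitative claim does not hold in the low end of the range. With $X$ the number of edges in no $C_\kappa$ and $Y$ the number of bridges, note $Y\le X$ always (a bridge is in no cycle, hence in no $C_\kappa$); what you actually need is $X-Y>0$, i.e.\ some \emph{non-bridge} edge avoids all $C_\kappa$'s. Your claim $\E[Y]=o(\E[X])$ is false when $p$ is of order $1/n$: there $|G|=\Theta(n)$, the number of $C_\kappa$'s is $O_P(1)$ so $X=(1-o(1))|G|$, and a constant fraction of edges are bridges (tree components and pendant trees off the $2$-core), so $\E[Y]=\Theta(n)=\Theta(\E[X])$. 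The appeal to the $2$-edge-connectivity threshold is irrelevant in this regime. The paper handles this by splitting: for $p>(1+\gO(1))\log n/n$, w.h.p.\ every edge lies in a cycle, and the $0$-statement of Lemma~\ref{lem:Qsharpthresh} supplies an edge in no $C_\kappa$; for $(1-o(1))/n<p<(1+\gO(1))\log n/n$, w.h.p.\ $G$ contains a cycle of length $\go(1)$ (length $\gO(n^{.3})$ once $p>(1+\gO(1))/n$), while the number of edges covered by $C_\kappa$'s is w.h.p.\ less than $\go\cdot(np)^\kappa$ for any $\go=\go(1)$, so the $C_\kappa$'s cannot even cover one longest cycle. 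You should replace your first-moment comparison of $X$ and $Y$ with a comparison of longest-cycle length against the (small) total number of edges in $C_\kappa$'s.
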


We actually prove the following stronger statement
(see Section~\ref{SEC:LemmaThresh} for ``stronger"), which says that edges not in
$\kappa$-gons are the obstruction to $\T_\kappa$ in a precise sense. This is our main result.

\begin{thm}
\label{thm:C5precise}
For any fixed odd $\kappa \geq 3$,
\beq{orig}
 \max_p \, \Pr(G_{n,p} \in \mathcal{Q}_\kappa \setminus \T_\kappa) \ra 0;
 \enq
equivalently,
\beq{equiv}
\forall \, p=p(n), ~~~\Pr(G_{n,p} \in \mathcal{Q}_\kappa \setminus \T_\kappa) \ra 0.
\enq
\end{thm}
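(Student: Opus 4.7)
I would split the range of $p$ at a small constant $\eps>0$ times the threshold $p^*_\kappa$. For $p \le (1-\eps) p^*_\kappa$, the $0$-statement of Lemma~\ref{lem:Qsharpthresh} gives $\Pr(G_{n,p} \in \Q_\kappa) \to 0$, which immediately bounds $\Pr(G_{n,p} \in \Q_\kappa \sm \T_\kappa)$ (the required uniformity in $p$ is a routine subsequence consequence of Lemma~\ref{lem:Qsharpthresh}). For the remaining range $p > (1-\eps)p^*_\kappa$ I would aim instead to prove $\Pr(G_{n,p} \in \T_\kappa) \to 1$ outright, which dominates the target probability.

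The natural reformulation is: $G \in \T_\kappa$ iff for every edge $e=uv \in G$ and every $u$-$v$ path $P$ in $G$, $e + P \in \cee_\kappa(G)$ (since $\cee(G)$ is generated by fundamental cycles of this form). I would try to prove this by induction on $|P|$. The base $|P|=\kappa-1$ is automatic, as $e+P$ is then a $\kappa$-cycle in $G$. For $|P| \ge \kappa$ I would seek a ``short chord'' $v_iv_{i+\kappa-1}$ of $P$ in $G$; this chord together with the corresponding $\kappa-1$ window along $P$ forms a $\kappa$-cycle in $G$, and using it replaces $P$ by a strictly shorter $u$-$v$ path $P'$ with $P + P' \in \cee_\kappa(G)$, whereupon $e+P \equiv e+P' \pmod{\cee_\kappa(G)}$ and the induction closes.

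This leaves a gap at paths of length $2,3,\ldots,\kappa-2$, i.e.\ cycles in $G$ of length $3,\ldots,\kappa-1$. For those I would use $\Q_\kappa$ in reverse: replace each edge $e$ of a short cycle $C$ by the length-$(\kappa-1)$ ``path portion'' of a $\kappa$-cycle through $e$ in $G$, producing a longer cycle (length $\ge \kappa+1$) that is equivalent to $C$ modulo $\cee_\kappa(G)$ and to which the shortening induction applies. Proposition~\ref{prop:C5Kn} is the combinatorial template: it guarantees the analogous decomposition in $K_n$, and the task is to execute it inside $G$ without leaving the graph.

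The main obstacle is the near-threshold range $p \sim p^*_\kappa$, where the expected number of $\kappa$-cycles through a given edge is only $\Theta(\log n)$, common neighbors are scarce, and the short-chord property is fragile. To handle this I anticipate needing a sprinkling argument---expose $G_{n,p}$ as $G_{n,p_1} \cup G_{n,p_2}$, use $G_{n,p_1}$ to realize $\Q_\kappa$ (one $\kappa$-cycle through each edge) and use the independent $G_{n,p_2}$ to supply the missing chords needed for path-shortening---together with a switching/enumeration argument in the spirit of the $\kappa=3$ treatment of~\cite{DHK}. Passing from $\kappa=3$ to general odd $\kappa$ primarily inflates the combinatorial bookkeeping (more local ``bad'' configurations and a recursive decomposition of sub-$\kappa$ cycles). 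The hypothesis that $\kappa$ is odd enters crucially via Proposition~\ref{prop:C5Kn}, since without it sub-$\kappa$ cycles of the wrong parity could not be decomposed into $\kappa$-cycles at all.
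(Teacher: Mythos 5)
Your plan has a structural error before the hard work even begins. For $p$ in the window $((1-\eps)p^*_\kappa,(1+\eps)p^*_\kappa)$, the probability of $\Q_\kappa$ is bounded away from both $0$ and $1$ (at $p=p^*_\kappa$ the number of edges outside $\kappa$-gons is asymptotically Poisson with positive mean), and since $\T_\kappa$ essentially forces $\Q_\kappa$ once $G$ has cycles, one cannot have $\Pr(\T_\kappa)\to 1$ anywhere in that window. So ``prove $\Pr(\T_\kappa)\to 1$ outright for $p>(1-\eps)p^*_\kappa$'' is false as stated; the theorem is genuinely conditional there, and your plan offers no way to prove the conditional form $\Pr(\Q_\kappa\sm\T_\kappa)\to 0$, which is exactly what makes the problem hard.

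The deeper gap is that the chord-shortening induction is a dead end near $p^*_\kappa$, not a bookkeeping burden. At $p=\Theta(p^*_\kappa)$ one has $np\asymp (n\log n)^{1/(\kappa-1)}$, so the diameter of $G$ is about $\kappa-1$ and every fundamental cycle (for any spanning tree) has length $O(\kappa)$. Such a cycle has only $O(\kappa)$ chord positions, each present with probability $p\ll1$, so the expected number of usable chords is $o(1)$ and w.h.p.\ a constant fraction of fundamental cycles have \emph{no} chord at all. Sprinkling cannot fix this: the chord budget per position is still $\Theta(p)$ regardless of how you split $G$ into $G_{n,p_1}\cup G_{n,p_2}$, and the induction needs a shortening move for \emph{every} cycle. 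More generally, replacing a length-$j$ subpath by a path of length $\kappa-1-j$ closing a $\kappa$-gon shortens only when $j>(\kappa-1)/2$, but near $p^*_\kappa$ the expected number of $\ell$-edge paths between two prescribed vertices is $\Omega(1)$ only for $\ell\ge\kappa-1$, so the needed short replacement paths essentially never exist. This is precisely the wall the paper alludes to in saying the $\kappa=3$ argument of \cite{DHK}---also built around explicit local decompositions---does not extend to $\kappa\ge5$. The paper instead avoids decompositions entirely: it works in the orthogonal complement, fixing $F$ to be a smallest element of $\cee_\kappa^\perp(G)\sm\cee^\perp(G)$ (so $F\neq\0$ iff $G\notin\T_\kappa$), invokes the Conlon--Gowers sparse stability theorem to force $|F|=O(n^2p)$ small, and then rules out small nonzero $F$ by a two-directional coupling of $G_{n,p}$ with $G_{n,q}$ plus path-count and spectral estimates. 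No cycle is ever explicitly written as a sum of $\kappa$-gons.
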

\nin
(The (trivial) equivalence is given by the observation that
\eqref{equiv} holds iff it holds when, for each $n$, $p=p(n)$ is a value
achieving the maximum in \eqref{orig} (and in this case the two statements
are the same).)

Theorems~\ref{thm:C5threshold} and
\ref{thm:C5precise} for $\kappa=3$ were proved in \cite{DHK};
even the former had been open and of interest, being the first unsettled case of a
conjecture of M.\ Kahle (see
\cite{Kahle,KahleArxiv}) on the homology of the clique complex of
$G_{n,p}$. Perhaps surprisingly, the argument of \cite{DHK} does not extend
to $\kappa \geq 5$, though, as discussed below, it does share a starting point with what we do here.

What happens if we replace the $C_\kappa$ of Proposition \ref{prop:C5Kn} by some other graph?
With $\mathcal{D}(G) = \{D \in \mathcal{E}(G) : |D| \equiv 0\pmod{2}\}$,
the proposition generalizes neatly:

\begin{thm}
\label{thm:Hspace}
For any graph $H$ with at least one edge and n large enough with respect to H,
\begin{align}
\label{eq:Hspace}
\cee_H(K_n) =
\begin{cases}
\mathcal{C}(K_n) &\text{ if } H \text{ is Eulerian and } |H| \text{ is odd,} \\
\mathcal{C}(K_n) \cap \mathcal{D}(K_n) &\text{ if } H \text{ is Eulerian and } |H| \text{ is even,} \\
\mathcal{E}(K_n) &\text{ if } H \text{ is not Eulerian and } |H| \text{ is odd,} \\
\mathcal{D}(K_n) &\text{ if } H \text{ is not Eulerian and } |H| \text{ is even.}
\end{cases}
\end{align}
\end{thm}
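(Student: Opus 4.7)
The plan is to verify the upper bounds directly, establish a single core lower bound $\cee(K_n)\cap\D(K_n)\subseteq\cee_H(K_n)$, and then finish with a short orbit analysis in the quotient $M:=\mathcal{E}(K_n)/(\cee(K_n)\cap\D(K_n))$. The upper bounds are immediate: if $H$ is Eulerian, every copy of $H$ decomposes into edge-disjoint cycles, giving $\cee_H(K_n)\subseteq\cee(K_n)$; if $|H|$ is even, every copy has an even number of edges, giving $\cee_H(K_n)\subseteq\D(K_n)$. Intersecting these yields the ``$\subseteq$'' half of each of the four cases.

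For the core lower bound I would use a vertex-swap move. Fix $v^*\in V(H)$ of degree $d\geq 1$ and two embeddings $\phi,\phi':V(H)\hookrightarrow V(K_n)$ that agree everywhere except at $v^*$, with $\phi(v^*)=a$, $\phi'(v^*)=b$, and $b\notin\phi(V(H))$; then
\[
\phi(H)+\phi'(H) \;=\; DS(a,b;L), \qquad L:=\phi(N_H(v^*)),
\]
where $DS(a,b;L):=\{a\ell,b\ell:\ell\in L\}$ is the ``double star'' with centers $\{a,b\}$ and leaves $L$. Since $DS(a,b;L_1)+DS(a,b;L_2)=DS(a,b;L_1\Delta L_2)$, choosing $|L_1\Delta L_2|=2$ isolates any 4-cycle $DS(a,b;\{x,y\})$ in $\cee_H(K_n)$. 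I would then invoke the standard fact that 4-cycles span $\cee(K_n)\cap\D(K_n)$ for $n\geq 4$---each even cycle of length $\geq 6$ splits off a 4-cycle via a chord, and each even-edge Eulerian subgraph is a sum of such cycles.

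With this kernel now inside $\cee_H(K_n)$, I would identify $M$ with $\{(O,\epsilon)\in 2^{V(K_n)}\times\mathbb{F}_2 : |O|\text{ even}\}$ via $G\mapsto(O(G),|E(G)|\bmod 2)$, where $O(G)$ is the odd-degree vertex set of $G$. In this language $\cee/(\cee\cap\D)=\langle(\emptyset,1)\rangle$ and $\D/(\cee\cap\D)=\{(S,0):|S|\text{ even}\}$. A copy $\phi(H)$ maps to $(\phi(V_\mathrm{odd}(H)),\epsilon_0)$ with $\epsilon_0:=|H|\bmod 2$; for $n\geq|V(H)|$ the first coordinate ranges freely over all $o$-subsets of $V(K_n)$, where $o:=|V_\mathrm{odd}(H)|$.

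The four cases then fall out. If $H$ is Eulerian ($o=0$), the image of $\cee_H(K_n)$ in $M$ is $\langle(\emptyset,\epsilon_0)\rangle$: Case~1 ($\epsilon_0=1$) gives $\cee_H=\cee$ and Case~2 ($\epsilon_0=0$) gives $\cee_H=\cee\cap\D$. If $H$ is not Eulerian ($o\geq 2$), then differences $(O_1,\epsilon_0)+(O_2,\epsilon_0)=(O_1\Delta O_2,0)$ with $|O_1\Delta O_2|=2$ span all $(S,0)$ with $|S|$ even, i.e., all of $\D/(\cee\cap\D)$; in Case~4 ($\epsilon_0=0$) this is already the full image, yielding $\cee_H=\D$, while in Case~3 ($\epsilon_0=1$) adding back any $(O,1)$ covers every $(T,1)$ with $|T|$ even, reaching all of $M$ and so $\cee_H=\mathcal{E}$. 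The main obstacle is the core lower bound: formulating the vertex-swap identity cleanly and invoking that 4-cycles span $\cee(K_n)\cap\D(K_n)$.
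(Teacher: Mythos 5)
Your proof is correct, and while it shares the key ``vertex-swap'' move with the paper (swapping one vertex of $H$ between two overlapping embeddings and summing the resulting double stars $DS(a,b;L_1)+DS(a,b;L_2)=DS(a,b;L_1\Delta L_2)$ to isolate a $C_4$), the overall architecture is genuinely different in a way that streamlines the argument. The paper first dispatches matchings and $|H|\le2$ by hand, then restricts to $H$ having a vertex of degree $\ge 2$ so that iterating the swap via $K_{2,d(x)}$ yields a $C_4$; you observe that the swap already works with a vertex of degree $1$, so the matching case needs no separate treatment. More substantially, after reaching $\cee(K_n)\cap\D(K_n)\subseteq\cee_H(K_n)$ the paper continues concretely: it takes a smallest element $F$ of $\tilde H + \cee_4$, proves a structural claim that $F$ is a triangle or a disjoint union of a matching and a star, and then argues that copies of such an $F$ span the required space. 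You instead pass to the quotient $M=\eee/(\cee\cap\D)$, identify it (correctly, by a homomorphism with kernel $\cee\cap\D$ and matching dimensions) with $\{(O,\epsilon): O\subseteq V,\ |O|\ \text{even},\ \epsilon\in\mathbb{F}_2\}$ via odd-degree set and edge parity, and read off all four cases from the orbit of the image point $(\phi(V_{\mathrm{odd}}(H)), |H|\bmod 2)$ under $S_n$. This is tidier: it removes the case analysis and the structural lemma about $F$. Both you and the paper take as given the standard identity $\cee_4(K_n)=\cee(K_n)\cap\D(K_n)$ for $n$ large; note your one-line justification (``each even-edge Eulerian subgraph is a sum of such cycles'') elides the point that a cycle decomposition of an even-edge Eulerian graph can include pairs of odd cycles, which must then be recombined, but since this is a standard fact invoked (not proved) by the paper as well, it is not a gap attributable to you. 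One small bookkeeping point you might make explicit is the value of ``$n$ large enough'': your swap needs $b\notin\phi(V(H))$ and room to vary $L_1,L_2$, and the quotient orbit argument needs $O_1,O_2$ with $|O_1\Delta O_2|=2$ extendable to embeddings; all of this holds for $n\ge v_H+2$, matching the paper's (optimal) bound.
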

\noindent Here $|H| = |E(H)|$ and ``Eulerian'' means degrees are even, but not that
the graph is necessarily connected.
Of course the left-to-right containments ($\cee_H(K_n)\sub \cee(K_n)$ and so on) are obvious.
%(If $|H|$ is even (resp. Eulerian), then so is each element of $\cee_H(K_n)$.)
%The easy proof of Theorem~\ref{thm:Hspace} is given in Section~\ref{SEC:ThmHspace}.

The natural value of $\cee_H(G)$, which we will denote $\W_H(G)$,
is then what one gets by replacing
$K_n$ by $G$ in the appropriate expression on the r.h.s. of \eqref{eq:Hspace};
e.g. for $H=C_\kappa$,
\beq{W}
\mathcal{W}_H(G)=\left\{\begin{array}{ll}
\cee(G)&\mbox{if $\kappa$ is odd,}\\
\cee(G)\cap \D(G)&\mbox{if $\kappa$ is even.}
\end{array}\right.
\enq
(We could instead set
$\mathcal{W}_H(G) = \eee(G)\cap \cee_H(K_n)$, which by
Theorem~\ref{thm:Hspace} is the same for all but a few
values of $n$.)
So we are interested in understanding when $G_{n,p}$ is likely to lie in
\[
\mathcal{T}_H := \{G:\cee_{H}(G) = \mathcal{W}_H(G)\}.
\]
(Again, $\cee_H(G) \subseteq \mathcal{W}_H(G)$ is trivial for any $H$ and $G$.)

As before, membership in $\T_H$ will (in non-silly cases) at least require that the copies of $H$
cover the edges of $G:=G_{n,p}$, but when $H$ is non-Eulerian there is a second
requirement:
each vertex of $G$ should have odd degree in some copy of $H$ in $G$
(since for any $v\in V(G) $, $\W_H(G)$ will contain graphs in which $v$ has odd degree).
For example if $H$ is a pair of triangles joined by a slightly long path and
$n^{-1+\eps}< p\ll n^{-2/3}$
for a suitable small $\eps$ depending on the length of the path,
then (w.h.p.) all edges of $G$ are in copies of $H$, but
most vertices fail to lie in triangles, so have even degree in every copy.

Generalizing $\mathcal{Q}_\kappa$, let $\mathcal{Q}_H$ be the class of nonempty
graphs $G$ satisfying (i) each edge of $G$ is in a copy of $H$, and
(ii) if $H$ is not Eulerian,
then each vertex of $G$ has odd degree in some copy of $H$;
so we have just said that we ``essentially" have
$\T_H\sub \Q_H$.
Though we hesitate to make it
a conjecture, we don't know that the following generalization of Theorem~\ref{thm:C5precise}
is wrong.
\begin{question}\label{Gen'l?}
Could it be that for each (fixed) H,
\beq{origH}
\max_p \, \Pr(G_{n,p} \in \mathcal{Q}_H \setminus \T_H) \ra 0?
\enq
\end{question}
\nin

\mn
Understanding when $G_{n,p}\in \Q_H$ w.h.p.\ is easier, so this would also tell us
when $\T_H$ is likely to hold.
(Note that in general we don't expect a statement like Theorem~\ref{thm:C5threshold},
since the ``threshold" for $\Q_H$ itself may not be sharp.)
Even if \eqref{origH} is not true in general, it seems likely
to hold for reasonably nice $H$ (even, say, edge-transitive to start, though this
should be much more than is needed).
One could also relax \eqref{origH} to an Erd\H{o}s-R\'enyi-like threshold statement;
e.g. with
$p_{\Q_H}=\min\{p_0: \Pr(G_{n,p} \in \mathcal{Q}_H )\geq 1/2 ~\forall p\geq p_0\}$,
%{\em viz.}
\[
\mbox{if $p\gg p_{\Q_H} $ then $G_{n,p}\in \T_H$ w.h.p.}
\]

\bn \textbf{\emph{Outline.}} The rest of the paper is organized as follows.
Usage notes conclude the introduction. Section~\ref{SEC2} recalls edge space
preliminaries
and outlines the main points (Lemmas~\ref{lem:p=O(p)'}-\ref{LemCG})
for the proof of Theorem~\ref{thm:C5precise}.
Section~\ref{SEC:Tools} reviews tools and
derives some relatively routine consequences.
%One more interesting point here is
%Theorem~\ref{thm:BKStochDom}, a natural extension of the well-known
%``BK Inequality" of van den Berg and Kesten \cite{BK} that,
%surprisingly, appears to be new.
%(The reason for including this will be clear when we get to it,
%but as we will make no use of it here, the proof will be given \red{elsewhere}.)
Section~\ref{SEC:LemmaThresh} proves Lemma~\ref{lem:Qsharpthresh}
and gives the easy derivation of Theorem~\ref{thm:C5threshold} from
Theorem~\ref{thm:C5precise}.

The heart of the paper is
Sections~\ref{SEC:PLCG}-\ref{SEC:LemmaP=O(p*)},
which
prove Lemmas~\ref{LemCG}, \ref{lem:couplingdown}
and \ref{lem:p=O(p)'}.  These are, respectively, very easy (modulo a big machine);
easy but a little circuitous; and not so easy and \emph{quite} circuitous
(and by far the most interesting part of the argument).
Finally, Section \ref{SEC:ThmHspace} gives the easy proof of Theorem~\ref{thm:Hspace},
which we postpone as it is unrelated to the rest of the paper.

\bn \textbf{\emph{Usage.}}  Given a graph $G$, we will use $V$ and $E$ for $V(G)$ and $E(G)$
when the meaning is clear.
We will often identify graphs with their edge sets.

For $v\in V$ and $F \subseteq G$ we use $N_F(v) = \{x : vx \in F\}$ and $d_F(v) = |N_F(v)|$.
For disjoint $A, B \subseteq V$, $\nabla_F(A,B)$ is the set of $F$-edges joining $A$ and $B$,
and we use $\nabla_F(A)$ for $\nabla_F(A, V\setminus A)$---these are the {\em cuts} of $G$---and
$\nabla_F(v)$ for $\nabla_F(\{v\})$.  In all cases we drop the subscripts when $F=G$.
As usual $\alpha(G)$ and $\Delta(G)$ (or $\gD_G$)
denote independence number and maximum degree of $G$.
We will sometimes use $v_G$ and $e_G$ for the numbers of vertices and edges of $G$.

We use $[n]$ for $\{1\dots n\}$ (for positive integer $n$),
$\log$ for $\ln$ and
$a = (1 \pm b)c$ for $(1-b)c \leq a \leq (1+b)c$.
Asymptotic notation ($\sim$, $O(\cdot)$, $\gO(\cdot)$ and so on), is standard,
with $a\ll b$ and $a\asymp b$ replacing $a=o(b)$ and $a=\Theta(b)$ when convenient.
Throughout the paper we assume $n$ is large enough to support our various assertions,
and usually pretend large numbers are integers.

\section{Main points for the Proof of Theorem~\ref{thm:C5precise}}
\label{SEC2}

Before outlining the proof of Theorem~\ref{thm:C5precise},
we need to review just a little more background.

\subsection{Edge space basics}
\label{subsec:EdgeSpace}
The edge space $\mathcal{E}(G)$ of a graph $G$ (defined in the paper's second paragraph),
being an $\mathbb{F}_2$-vector space, comes equipped with a standard inner product:
$\langle J,K\rangle = \sum_{e \in E(G)} J(e)K(e) = |J \cap K|$, where the sum and cardinality
are interpreted mod 2. (The first expression thinks of $J$ and $K$ as vectors, the second as
subgraphs of $G$.) With this, the orthogonal complement, $\mathcal{S}^\perp$, of a subspace
$\mathcal{S}$ of $\mathcal{E}(G)$ is defined as usual. Then $\mathcal{C}^\perp(G)$, called
the \emph{cut space} of $G$, consists of the (indicators of) cuts of $G$ (which, note,
includes $\0$);
$(\cee(G)\cap \D(G))^\perp$ consists of cuts and their complements; and
$\cee_H^\perp(G)$ is the set of
subgraphs of $G$ having even intersection with every copy of $H$ (in $G$).

As mentioned earlier, $\cee_H(G) \subseteq \mathcal{W}_H(G)$ always; dually,
$\mathcal{W}^\perp_H(G) \subseteq \cee^\perp_H(G)$. In particular, for odd $\kappa \geq 3$,
\begin{align}
\label{eq:edgespaceperps}
\mathcal{C}^\perp(G) \subseteq \cee^\perp_\kappa(G), \text{ and equality here is the
same as $G \in \T_\kappa$.}
\end{align}

The next (trivial) observation will be useful at a few points.

\begin{prop}\label{prop:addcut}
Let $G$ be a graph and $L\sub G$, and suppose
$L',L''$ are (respectively) smallest and largest members of
the coset $L + \cee^\perp(G)$.  Then
\[
\forall \, v\in V
~~~ d_{L'}(v)\leq d_{G}(v)/2 \leq
d_{L''}(v).
\]
\end{prop}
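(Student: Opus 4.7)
The plan is to exploit the fact that for any vertex $v \in V$, the star (cut) $\nabla_G(v)$ lies in $\cee^\perp(G)$, so $L + \nabla_G(v) \in L + \cee^\perp(G)$ is in the same coset as $L$. Toggling by $\nabla_G(v)$ therefore gives a one-step local move within the coset whose effect on the size is easy to compute.

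Specifically, for any $K$ in the coset,
\[
|K + \nabla_G(v)| \;=\; |K| + |\nabla_G(v)| - 2|K \cap \nabla_G(v)| \;=\; |K| + d_G(v) - 2 d_K(v),
\]
so the size strictly decreases precisely when $d_K(v) > d_G(v)/2$, and strictly increases precisely when $d_K(v) < d_G(v)/2$.

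Now suppose, for contradiction, that $d_{L'}(v) > d_G(v)/2$ for some $v$. Then $L' + \nabla_G(v)$ lies in the coset and has strictly smaller size than $L'$, contradicting the minimality of $L'$. Symmetrically, if $d_{L''}(v) < d_G(v)/2$, then $L'' + \nabla_G(v)$ lies in the coset and is strictly larger, contradicting the maximality of $L''$. This gives both inequalities. There is no real obstacle here; the only point to get right is remembering that stars belong to the cut space, which is just the statement that every cycle meets every star in an even number of edges.
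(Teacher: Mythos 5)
Your proof is correct and is exactly the argument the paper sketches in the parenthetical remark following the proposition: toggling by the star $\nabla(v)$, which lies in $\cee^\perp(G)$, strictly decreases size when $d_K(v)>d_G(v)/2$ and strictly increases it when $d_K(v)<d_G(v)/2$, contradicting minimality or maximality as appropriate. You have merely written out the size calculation explicitly.
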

\nin
(For example if $d_{L'}(v)>d_{G}(v)/2$, then $L' + \nabla(v)$
($\in L + \cee^\perp(G)$)
is smaller than $L'$.)

In particular, if $G \notin \T_\kappa$, then since
$\cee_{\kappa}^\perp(G) \setminus \mathcal{C}^\perp(G) \supseteq L+\cee^\perp(G)$
for any $L \in \cee_{\kappa}^\perp(G) \setminus \mathcal{C}^\perp(G)$, a smallest
element $F$ of $\cee_{\kappa}^\perp(G) \setminus \mathcal{C}^\perp(G)$ satisfies
\begin{align}
\label{dFdG}
d_F(v)\leq d_{G}(v)/2 \;\;\; \forall \, v \in V.
\end{align}

\subsection{Structure of the proof}
\label{SEC:Overview}

For the rest of the paper we fix an odd $\kappa \geq 5$ (as mentioned earlier,
the case $\kappa=3$ of
Theorem~\ref{thm:C5precise} was proved in \cite{DHK}),
and set
$p^*=p^*_\kappa$, $\Q=\Q_\kappa$ and $\T=\T_\kappa$; so our objective,
\eqref{orig}, becomes
\beq{orig'}
 \max_p \, \Pr(G_{n,p} \in \Q\sm \T) \ra 0.
 \enq

As sometimes happens, though \eqref{orig'} should become ``more true'' as $p$ ($>p^*$) grows,
some points in the proof
run into difficulties for larger $p$, and it seems easiest to deal first with smaller $p$
and then derive the full statement from this restricted version. The next two lemmas,
the first of
which is our main point, implement this plan.

\begin{lemma}
\label{lem:p=O(p)'}
For any fixed $K$ and $p\leq  Kp^*$,
\beq{MP}
\Pr(G_{n,p} \in \Q\setminus \T)\ra 0.
\enq
\end{lemma}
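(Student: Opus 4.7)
The plan is to establish \eqref{MP} via a first-moment estimate on minimal witnesses to $G \in \Q \sm \T$. Suppose $G = G_{n,p} \in \Q \sm \T$ with $p \leq K p^*$; as in the discussion around \eqref{dFdG}, a smallest $F \in \cee_{\kappa}^{\perp}(G) \sm \cee^{\perp}(G)$ satisfies $d_F(v) \leq d_G(v)/2$ for every $v$, so it is enough to show that the expected number of pairs $(G, F)$ with $G \in \Q$ and $F \sub G$ a ``bounded-degree $\kappa$-orthogonal non-cut'' tends to $0$.

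The structural content of $F \in \cee_\kappa^\perp(G)$ is that every $\kappa$-gon of $G$ has even intersection with $F$. Equivalently, for any $uv \in E(G)$ and any path $P$ of length $\kappa-1$ from $u$ to $v$ in $G$ (so that $P + uv$ is a $\kappa$-gon), the value $F(uv)$ is determined mod $2$ by the $F$-values on $P$. When $G \in \Q$, such closing paths exist for every edge, giving a dense network of local determinations on $F$. The failure $F \notin \cee^\perp(G)$ then provides a ``defect'': two $u$-$v$ walks in $G$ of opposite $F$-parity, equivalently a closed walk in $G$ of odd $F$-parity, which one should hope to localize in a small subgraph.

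My approach would be a careful union bound, parametrizing $F$ by a small ``core'' (say, its support together with a short defect witness). For each candidate $F_0 \sub K_n$ of specified shape, I would bound
\[
\Pr\bigl[F_0 \sub G,\ F_0 \in \cee_\kappa^\perp(G),\ F_0 \notin \cee^\perp(G),\ G \in \Q\bigr].
\]
The first condition contributes $p^{|F_0|}$. The $\kappa$-orthogonality constraint forbids every $\kappa$-gon in $G$ of odd intersection with $F_0$, giving an exponential suppression in the number of ``potentially bad'' $\kappa$-gons; the $\Q$-condition forces each edge (including each edge of $F_0$) to lie in some $\kappa$-gon. After grouping $F_0$'s by degree profile and support size, and using the standard concentration estimates from Section~\ref{SEC:Tools} for counts of short paths and of $\kappa$-gons through a vertex or an edge in $G_{n,p}$, one aims to show the resulting sum is $o(1)$.

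The main obstacle, I expect, is the critical regime $p \asymp p^*$: here each edge of $G_{n,p}$ lies in only $O(\log n)$ $\kappa$-gons, so the $\kappa$-orthogonality conditions on $F$ are just barely more restrictive than the $\Q$-condition itself, leaving very little margin for a crude union bound. Overcoming this will likely require a more delicate analysis, exploiting how $C_\kappa$-parity constraints propagate along $(\kappa-2)$-paths to cover many edges from few ``seeds'', using the sub-half-degree bound $d_F(v) \leq d_G(v)/2$ to bound the entropy of ``nearly-cut'' $F$'s, and quite possibly a conditioning or switching argument that decouples the defect of $F$ from the residual randomness of $G$ used to enforce $\Q$. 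This aligns with the ``not so easy and quite circuitous'' preview in Section~\ref{SEC:Overview}, and is where I would expect the bulk of the technical work to lie.
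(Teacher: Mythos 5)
Your proposal correctly diagnoses the obstruction---a naive first-moment/union bound over candidate $F\subseteq K_n$ has essentially no margin when $p\asymp p^*$, since each edge lies in only $O(\log n)$ $\kappa$-gons---but the final paragraph only lists directions (``propagation along paths,'' ``entropy bound,'' ``conditioning or switching'') without supplying a mechanism, and that missing mechanism is the substance of the proof. The paper works with a two-sided coupling $G_0=G_{n,q}\subseteq G=G_{n,p}$ for a \emph{fixed} small $q=\vartheta p$, sets $F_0=F\cap G_0$, and runs the union bound over candidate values $S\subseteq G_0$ of $F_0$ rather than over $F\subseteq K_n$. This is the concrete realization of the ``decoupling'' you gesture at: conditioned on $G_0$, the edges of $G\setminus G_0$ are genuinely fresh randomness. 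The two pillars are (a) \emph{coupling down} (Lemmas~\ref{lem:Claim2} and~\ref{prop:Claim1}): if $F\neq\0$ and $G\in\Q$ then w.h.p.\ a constant fraction of the edges $xy\in F$ satisfy $\gs_0(x,y;F_0)=\Omega(n^{\kappa-2}q^{\kappa-1})$, hence lie in the set $R(F_0)$ of ``heavy'' pairs; and (b) \emph{coupling up}: for each fixed $S\subseteq G_0$, the number of $G\setminus G_0$-edges landing in $R(S)$ is binomial with mean about $|R(S)|p$, which is then compared to the forced $\Omega(\alpha n^2p)$.

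The decisive quantitative ingredient---absent from your sketch---is the ``ropes'' bound (Lemma~\ref{lem:InterruptedPathsBound}), a spectral estimate on the number of $P_t$'s whose two terminal edges lie in a given small $S\subseteq G_0$. Together with the path-count genericity of $G_0$, it gives $|R(S)|=O(\ga_S^{1+\gd}n^2)$ for a fixed $\gd>0$, a factor $\ga_S^{-\gd}$ beyond the trivial bound; this extra power of $\ga_S$ is exactly what beats the entropy $\binom{|G_0|}{|S|}$ in the union over $S$. Your own observation that the $\kappa$-orthogonality of $F$ is ``just barely more restrictive'' than $\Q$ describes precisely the collapse that occurs without this gain (the paper notes explicitly that replacing $\ga^{-\Omega(1)}$ by $\ga^{-o(1)}$ would kill the argument). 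You also do not anticipate the need for $F$-\emph{central} paths (requiring an internal $F$-edge), which is what avoids the degenerate $l=0$ case in the ropes argument, nor the auxiliary results (Propositions~\ref{prop:lightedgesJanson}--\ref{prop:Claim1C}) needed to show that $\sigma(x,y)$ and $\sigma(x,y;F)$ are both large for most $xy\in F$. So this is a genuine gap: the diagnosis is right, but the key lemma and the two-sided coupling architecture on which the whole proof rests are missing.
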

\nin
(The interest here is really in $p$ at least about $p^*$, smaller
values being handled by Lemma~\ref{lem:Qsharpthresh}; see
\eqref{eq:pgeq1-eps}.)

\begin{lemma}
\label{lem:couplingdown}
There exists $K$ such that if $p>q:=Kp^*$, then
\[
\Pr(G_{n,p}\notin \T) < \Pr(G_{n,q}\notin \T) + o(1).
\]
\end{lemma}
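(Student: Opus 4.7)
The plan is to run the standard sprinkling coupling $G_{n,q}\sub G_{n,p}$ (so $H:=G_{n,p}\sm G_{n,q}$ is, conditionally on $G_{n,q}$, a $G_{n,p'}$-random subset of $K_n\sm G_{n,q}$ with $p'=(p-q)/(1-q)$). Under this coupling,
\[
\Pr(G_{n,p}\notin \T)\le \Pr(G_{n,q}\notin \T)+\Pr(G_{n,q}\in\T \text{ and } G_{n,p}\notin \T),
\]
so the task reduces to bounding the second term by $o(1)$.

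The deterministic heart of the argument is the following sufficient condition: \emph{if $G\in\T_\kappa$ and every edge $e\in H$ (with $H\cap G=\varnothing$) is contained in a $\kappa$-cycle $P_e$ whose remaining $\kappa-1$ edges lie in $G$, then $G\cup H\in\T_\kappa$.} I would verify this through the orthogonal characterization \eqref{eq:edgespaceperps}. Take $F\in\cee_\kappa^\perp(G\cup H)$; since every $C_\kappa\sub G$ is also a $C_\kappa$ of $G\cup H$, the restriction $F\cap G$ lies in $\cee_\kappa^\perp(G)=\cee^\perp(G)$, so $F\cap G=\nabla_G(A)$ for some $A\sub V$. For each $e\in H$, the relation $|F\cap P_e|\equiv 0\pmod 2$, combined with $P_e\cap H=\{e\}$ and the elementary fact that the $(\kappa-1)$-path $P_e\sm\{e\}\sub G$ crosses $A$ an odd number of times iff its endpoints are separated by $A$, forces $e\in F \Leftrightarrow e\in\nabla(A)$. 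Hence $F=\nabla_{G\cup H}(A)\in\cee^\perp(G\cup H)$, so $G\cup H\in\T_\kappa$.

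It therefore suffices to show that w.h.p.\ every pair of distinct vertices is joined by a path of length $\kappa-1$ in $G_{n,q}$: on this event together with $\{G_{n,q}\in\T\}$, every edge of $H$ has its required witness $P_e$. For a fixed pair $\{u,v\}$ the expected number of such paths is $\sim n^{\kappa-2}q^{\kappa-1}=\tfrac{\kappa}{\kappa-1}K^{\kappa-1}\log n$, which can be made an arbitrarily large multiple of $\log n$ by choosing $K$ large. A routine Janson-type lower tail bound then yields $\Pr(\text{no such path between a fixed pair})=o(n^{-2})$ for $K$ sufficiently large in terms of $\kappa$, and the union bound over the $\binom{n}{2}$ pairs finishes the proof.

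The only (minor) obstacle is calibrating $K$ so that the one-pair deviation estimate beats $n^{-2}$; with the tools assembled in Section~\ref{SEC:Tools} this is a standard exercise.
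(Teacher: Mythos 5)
Your proposal is correct and takes a genuinely different, and notably cleaner, route than the paper's. The paper works with viewpoint~(A) of Section~\ref{subsec:Coupling} (choose $G=G_{n,p}$ first, percolate down to $G_0=G_{n,q}$) and argues that if $G\notin\T$ then w.h.p.\ so is $G_0$; to make this work it leans on the $F(G)$-device together with Lemma~\ref{LemCG} (hence the Conlon--Gowers stability result), Corollary~\ref{cor:pip}, Proposition~\ref{prop:Frightsize}, Proposition~\ref{cpts} and Proposition~\ref{density}, and the authors themselves describe the resulting argument as ``more circuitous than one might wish.'' You instead couple up (viewpoint~(B)) and establish the deterministic implication: if $G\in\T_\kappa$ and every edge $e$ of $H$ (disjoint from $G$) lies in a $C_\kappa$ whose other $\kappa-1$ edges are in $G$, then $G\cup H\in\T_\kappa$. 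Your verification of this via the cut characterization is correct: for $F\in\cee^\perp_\kappa(G\cup H)$ one gets $F\cap G=\nabla_G(A)$ from $G\in\T_\kappa$, and the parity constraint from each witnessing cycle $P_e$ forces $F\cap H=\nabla(A)\cap H$, whence $F=\nabla_{G\cup H}(A)\in\cee^\perp(G\cup H)$. The probabilistic input then reduces to showing that w.h.p.\ \emph{every} pair of vertices is joined by a $P_{\kappa-1}$ in $G_{n,q}$, which for $q=Kp^*$ with $K$ large is a standard Janson lower-tail calculation (one checks $\ov{\gD}\sim\mu\sim K^{\kappa-1}\tfrac{\kappa}{\kappa-1}\log n$, so the per-pair failure probability is $\exp[-(1-o(1))\mu]=o(n^{-2})$ once $K^{\kappa-1}\tfrac{\kappa}{\kappa-1}>2$). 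The upshot is that your argument buys the lemma without invoking the stability machinery or the $F$-bookkeeping, using only the elementary path-count estimate already implicit in the $1$-statement of Lemma~\ref{lem:Qsharpthresh} (strengthened from ``every edge'' to ``every pair,'' which is exactly why $K$ needs to be taken large). The one thing you wave at---that $\ov{\gD}\sim\mu$ so the Janson bound has the right exponent---should be stated explicitly, but it is indeed the routine computation the paper already performs in the proof of Lemma~\ref{lem:Qsharpthresh}.
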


\nin
Applying Lemmas~\ref{lem:couplingdown} and \ref{lem:p=O(p)'}, together with (the
1-statement of) Lemma~\ref{lem:Qsharpthresh} to $p'(n):=\min\{p(n),Kp^*(n)\}$
then easily gives
Theorem~\ref{thm:C5precise}.
(For $n$'s with $p(n)>Kp^*$, we have,
using Lemma~\ref{lem:couplingdown} for the first inequality and
Lemmas~\ref{lem:p=O(p)'} and
\ref{lem:Qsharpthresh} for the final $o(1)$,
\begin{align*}
\Pr(G_{n,p}\in \Q\sm \T)&< \Pr(G_{n,p'}\not\in \T) +o(1)\\
&< \Pr(G_{n,p'}\in \Q\sm \T) +\Pr(G_{n,p'}\not\in \Q)+o(1) =o(1),
\end{align*}
and for the remaining $n$'s we have $p=p'$ and
Lemma~\ref{lem:p=O(p)'} applies directly.)

%%%%%%%%%%%%%%%%%%%%%%%
\iffalse
these are enough: if $p=p(n)$ violates \eqref{equiv},
then Lemma~\ref{lem:couplingdown}, with the 1-statement of Lemma~\ref{lem:Qsharpthresh}, says
$p'(n):=\min\{p(n),Kp^*(n)\}$ (with $K$ as in Lemma \ref{lem:couplingdown})
violates Lemma~\ref{lem:p=O(p)'}.
\fi
%%%%%%%%%%%%%%%%%%%%%%%

\medskip
The following device will play a central role in the proofs of both of these lemmas
(so in most of the paper).
For the remainder of our discussion we fix some rule that associates with each finite
graph $G$ a subgraph $F(G)$ satisfying
\begin{align}
\label{eq:f}
F(G) =
\begin{cases}
\0 &\text{if } G \in \T, \\
\text{some smallest element of } \cee_{\kappa}^\perp(G)
\setminus \cee^\perp(G) &\text{if } G \notin \T.
\end{cases}
\end{align}
%(Note we always have
%$\cee^\perp(G) \subseteq \cee^\perp_\kappa(G)$---the dual of
%$\cee_\kappa(G) \subseteq \cee(G)$---with equality iff $G\in \T_\kappa$,
%so the definition makes sense.)

\nin
We will use this only with $G=G_{n,p}$, so set $F(G_{n,p})=F$ throughout.
A crucial point is that $G$ determines $F$ (for ``crucial" see the paragraph preceding
Proposition~\ref{prop:Frightsize}).
That $F$ is a
minimizer will be used only to say that it is small and has small
degrees, as promised by \eqref{dFdG}.

With $F$ thus defined we may replace the event $\{G_{n,p}\notin \T\} $
by the more convenient $\{F \neq \0 \}$, which in particular allows us
to tailor our treatment to the size of a hypothetical $F$.
As we will see, ruling out fairly large $F$'s is easy---not
from scratch, but with the help of a powerful result
from \cite{Conlon-Gowers} (Theorem~\ref{thm:ConlonGowers} below), which more
or less immediately yields:

\begin{lemma}
\label{LemCG}
For fixed $c>0$ and $p\gg n^{-(\kappa-2)/(\kappa-1)}$,
\beq{eq:LemCG}
\Pr(|F|>c n^2p)\ra 0.
\enq
\end{lemma}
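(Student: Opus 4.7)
\nin The plan is a short structural observation followed by one application of the ``big machine''. First observe: if $F\in\cee_\kappa^\perp(G)$ then $F$ cannot contain any copy of $C_\kappa$, since such a copy $C\sub F$ would be a $\kappa$-cycle of $G$ with $|F\cap C|=\kappa$, which is odd. So any putative $F$ with $|F|>cn^2p$ is $C_\kappa$-free, and by \eqref{dFdG} also satisfies $|F|\le |G|/2$; together with $|G|=(1+o(1))\binom{n}{2}p$ (w.h.p.), this places the relative density $\alpha:=|F|/|G|$ in a range $[\alpha_0,1/2]$ with $\alpha_0>0$ depending only on $c$.

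\mn
The next step is to use Theorem~\ref{thm:ConlonGowers} to exhibit a $\kappa$-cycle of $G$ having \emph{odd} intersection with $F$, contradicting $F\in\cee_\kappa^\perp(G)$. The hypothesis $p\gg n^{-(\kappa-2)/(\kappa-1)}=n^{-1/m_2(C_\kappa)}$ is exactly the CG threshold, so the associated sparse (KLR-type) counting lemma applies: after a sparse $\eps$-regular decomposition, if $F$ has density $\alpha_i$ in the $i$th pair of a $\kappa$-tuple of parts forming a cycle pattern, the number of $C_\kappa$'s of $G$ realizing edge-in-$F$ pattern $\eta\in\{0,1\}^\kappa$ is approximately $\prod_i\alpha_i^{\eta_i}(1-\alpha_i)^{1-\eta_i}$ times the total $C_\kappa$-count on those parts. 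Summing over $\eta$ of odd Hamming weight gives the factor
\[
\tfrac{1}{2}\bigl(1-\prod_i(1-2\alpha_i)\bigr),
\]
which is strictly positive as soon as some $\alpha_i\neq 0$.

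\mn
Aggregating, since $\alpha\ge\alpha_0>0$ there are plenty of $\kappa$-tuples of parts in which some pair carries $F$-edges, and each contributes positively to the count of odd-intersection $C_\kappa$'s; summing yields a strictly positive total, contradicting $F\in\cee_\kappa^\perp(G)$. I expect the main (mild) obstacle to be the sparse-regularity bookkeeping: one must confirm that the KLR approximation errors are dominated by the honest positive contributions, and in particular that the degree bound $d_F(v)\le d_G(v)/2$ --- which prevents $F$ from being too concentrated on a handful of pairs --- leaves enough ``honestly mixed'' $\kappa$-tuples to secure the positive total. With Theorem~\ref{thm:ConlonGowers} supplying the counting, everything else is, as advertised, ``more or less immediate''.
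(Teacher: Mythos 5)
The proposal is a genuinely different route from the paper's, but it has a real gap. Your opening observations are fine ($F$ is $C_\kappa$-free, $|F|\le|G|/2$ by \eqref{dFdG}), but the counting-lemma step fails as written. The factor $\tfrac12\bigl(1-\prod_i(1-2\alpha_i)\bigr)$ is \emph{not} strictly positive whenever some $\alpha_i\neq 0$: it vanishes precisely when $\prod_i(1-2\alpha_i)=1$, e.g.\ when all $\alpha_i\in\{0,1\}$ with an even number of $1$'s. That is exactly the signature of $F$ being a cut $\nabla_G(A)$: any $\kappa$-cycle through the parts crosses $A$ an even number of times, so $\alpha_i\approx1$ on the crossing pairs and $\approx0$ elsewhere, and the factor is $\approx 0$. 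The degree bound \eqref{dFdG} does not exclude this --- a balanced cut in $G_{n,p}$ has $d_{\nabla(A)}(v)\approx np/2\approx d_G(v)/2$. Worse, even when $F$ is merely \emph{close} to a cut the factor is small, and then the KLR-type approximation error swallows the main term, so positivity of the odd-intersection count cannot be concluded. In short, the only hypotheses you use are ``dense, $C_\kappa$-free, small degrees,'' which a balanced cut also satisfies; the decisive facts $F\notin\cee^\perp(G)$ and, more importantly, that $F$ is \emph{minimal} in its coset $F+\cee^\perp(G)$, are never invoked. (You also use a sparse regularity/counting lemma that is not what Theorem~\ref{thm:ConlonGowers} states; the paper only imports the stability statement.)

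For comparison, the paper sidesteps all of this by passing to the \emph{largest} element $F'$ of the coset $F+\cee^\perp(G)$: by Proposition~\ref{prop:addcut} we have $|F'|\geq|G|/2$, and $F'$ is $C_\kappa$-free for the same reason as $F$, so Theorem~\ref{thm:ConlonGowers} (with $\eps=c/3$) produces $A$ with $|F'\setminus\nabla_G(A)|<\eps n^2p$. Then $|\nabla_G(A)|<(1+\eps)n^2p/4$ (Proposition~\ref{density}(c)) and $|G|\sim n^2p/2$ give $|F'\triangle\nabla_G(A)|<3\eps n^2p$; since $F'+\nabla_G(A)$ lies in the same coset as $F$ and $F$ is a smallest element of that coset, $|F|\leq|F'\triangle\nabla_G(A)|<cn^2p$. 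The stability theorem is applied where it is actually applicable (size $\geq|G|/2$), and minimality of $F$ does the rest. A potential repair of your approach would be a dichotomy --- either $F$ is far from every cut (and then try to force a positive odd-intersection count) or close to some cut $\nabla_G(A)$ (and then $|F+\nabla_G(A)|<|F|$ contradicts minimality) --- but the first branch is essentially the stability theorem again, applied to an object of the wrong size, so there seems to be no shortcut.
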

\nin
Thus the real problem in proving Lemma~\ref{lem:p=O(p)'},
and the most interesting part of the whole business,
is dealing with $F$'s that are small relative to
$G$ (but nonempty).
%---that is, proving that for $p \leq Kp^*$ and $\lambda = \lambda(n) \ra 0$,
%\[\Pr((G_{n,p} \in \Q) \; \wedge \; (0 < |F| < \lambda n^2p)) \ra 0.\]
Thus far---and a little further;
see the preview following the statement of Lemma~\ref{lem:InterruptedPathsBound}---our
structure mirrors that of
\cite{DHK}, but the (two-page) argument handling this main point there
offers no help here.

\mn
\emph{Remark.}
In connection with Question~\ref{Gen'l?},
it seems worth noting here that Lemma~\ref{LemCG}, at least, extends
to considerably more general $H$;
see
Section~\ref{SEC:PLCG} for a little more on this.

\section{Tools}
\label{SEC:Tools}

\subsection{Deviation and correlation}
\label{Deviation and correlation}
Set
\beq{eq:varphidef}
\varphi(x) = (1+x)\log(1+x)-x
\enq
for $ x > -1$ and (for continuity) $ \varphi(-1)=1$.
We use ``Chernoff's Inequality" in the following form; see for example
\cite[Thm. 2.1]{JLR}.
\begin{thm}
\label{thm:Chernoff}
If $X \sim \mathrm{Bin}(n,p)$ and $\mu = \mathbb{E}[X] = np$, then for $t \geq 0$,
\begin{align}
\label{eq:ChernoffUpper}
\Pr(X \geq \mu + t) &\leq \exp\left[-\mu\varphi(t/\mu)\right]
\leq \exp\left[-t^2/(2(\mu+t/3))\right], \\
\label{eq:ChernoffLower}
\Pr(X \leq \mu - t) &\leq \exp[-\mu\varphi(-t/\mu)]
\leq \exp[-t^2/(2\mu)].
\end{align}
\end{thm}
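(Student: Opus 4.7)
The plan is the standard Cram\'er--Chernoff exponential Markov method. Writing $X=\sum_{i=1}^n X_i$ as a sum of i.i.d.\ Bernoulli$(p)$'s, I would first apply exponential Markov: for any $\lambda>0$,
\[
\Pr(X\geq \mu+t)\leq e^{-\lambda(\mu+t)}\mathbb{E}[e^{\lambda X}].
\]
Independence and the inequality $1+y\leq e^y$ give $\mathbb{E}[e^{\lambda X}]=(1-p+pe^{\lambda})^n\leq \exp(\mu(e^{\lambda}-1))$, so the upper tail is at most $\exp(\mu(e^{\lambda}-1)-\lambda(\mu+t))$.

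Next I would optimize in $\lambda$. Differentiating the exponent gives $\lambda^{\ast}=\log(1+t/\mu)$; substituting and writing $x=t/\mu$ collapses the exponent into $-\mu[(1+x)\log(1+x)-x]=-\mu\varphi(t/\mu)$, which is the first inequality of \eqref{eq:ChernoffUpper}. The lower-tail counterpart is handled identically with $\lambda<0$ (optimum at $\lambda^{\ast}=\log(1-t/\mu)$, valid for $t\leq\mu$, the bound being trivial otherwise), yielding the first inequality of \eqref{eq:ChernoffLower}.

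Converting to the ``Bernstein-style'' second inequalities is a purely analytic exercise on the function $\varphi$. For \eqref{eq:ChernoffUpper} one needs $\varphi(x)\geq x^2/(2+\tfrac{2}{3}x)$ on $x\geq 0$, which follows by checking that both sides agree along with their first derivatives at $0$ and comparing second derivatives (or by a term-by-term power series comparison). For \eqref{eq:ChernoffLower} one needs the cleaner $\varphi(-x)\geq x^2/2$ on $[0,1]$, which is immediate from $\varphi''(-y)=1/(1-y)\geq 1$ on $[0,1)$ together with $\varphi(0)=\varphi'(0)=0$.

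The main obstacle here is really only notational: all the content is in the single Legendre-transform computation $-\mu\varphi(t/\mu)$, and the two ``cleaner'' exponential bounds are simply elementary majorizations of $\varphi$ by a quadratic-over-linear expression and by a quadratic, respectively. This is why the authors cite \cite{JLR} for the statement rather than reproducing the argument.
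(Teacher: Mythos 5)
Your proof is correct and is the standard Cram\'er--Chernoff argument. The paper does not prove this theorem at all---it simply cites \cite[Thm.~2.1]{JLR}---and your argument is essentially the one given there (exponential Markov, MGF bound via $1+y\leq e^y$, Legendre-transform optimization yielding $-\mu\varphi$, then elementary majorizations of $\varphi$ for the Bernstein-style forms), so there is nothing to compare.

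One small caution on the final step: for $\varphi(x)\geq x^2/(2+\tfrac{2}{3}x)$ on $x\geq 0$, simply ``comparing second derivatives'' at $0$ is not by itself conclusive (both sides have the same quadratic \emph{and} cubic Taylor terms, $x^2/2 - x^3/6$); one needs a global comparison, e.g.\ the standard trick of showing $h(x):=\varphi(x)(2+\tfrac{2}{3}x)-x^2$ satisfies $h(0)=h'(0)=0$ and $h''(x)\geq 0$ for $x\geq 0$, or a careful series argument. This is the ``purely analytic exercise'' you flag, and your $\varphi(-x)\geq x^2/2$ argument via $\varphi''(-y)=1/(1-y)\geq 1$ is clean and complete.
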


\nin
For larger deviations the following consequence of the finer bound in \eqref{eq:ChernoffUpper}
will be convenient.
\begin{thm}
\label{thm:Chernoff'}
For $X\sim B(n,p)$ and any $K$, with $\mu=\mathbb{E}[X]=np$,
\begin{eqnarray*}
\Pr(X > K\mu) < \exp[-K\mu \log (K/e)].
\end{eqnarray*}
\end{thm}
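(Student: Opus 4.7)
The plan is to derive this large-deviation bound directly from the finer (entropy-form) Chernoff bound already recorded in Theorem~\ref{thm:Chernoff}, namely
\[
\Pr(X \geq \mu + t) \leq \exp[-\mu\varphi(t/\mu)],
\]
with $\varphi(x) = (1+x)\log(1+x) - x$.

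First I would specialize to $t = (K-1)\mu$, so that $\mu + t = K\mu$ and $t/\mu = K-1$, giving
\[
\Pr(X > K\mu) \leq \Pr(X \geq K\mu) \leq \exp[-\mu\,\varphi(K-1)].
\]
Next I would unwind $\varphi(K-1)$: by the definition \eqref{eq:varphidef},
\[
\varphi(K-1) = K\log K - (K-1) = K\log K - K + 1.
\]
Substituting and pulling the $+1$ term out of the exponent produces
\[
\Pr(X > K\mu) \leq \exp[-\mu(K\log K - K + 1)] = e^{-\mu}\,\exp[-K\mu\,\log(K/e)],
\]
since $K\log K - K = K\log(K/e)$. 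Because $\mu > 0$ (the bound is vacuous otherwise, so we may assume this), $e^{-\mu} < 1$, and we obtain the claimed strict inequality
\[
\Pr(X > K\mu) < \exp[-K\mu\,\log(K/e)].
\]

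There is no real obstacle here: the whole content is the identity $\varphi(K-1) = K\log(K/e) + 1$, and the ``$-1$'' in this identity is exactly what lets us replace $\leq$ by $<$. It is worth noting that the bound is only interesting (and only used later) when $K$ is large enough that $\log(K/e) > 0$, i.e.\ $K > e$; for smaller $K$ the statement is vacuous but still true.
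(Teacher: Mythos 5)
Your proposal is correct and matches the paper's intent exactly: the paper explicitly presents Theorem~\ref{thm:Chernoff'} as a ``consequence of the finer bound in \eqref{eq:ChernoffUpper}'' without spelling it out, and your substitution $t=(K-1)\mu$ together with the identity $\varphi(K-1)=K\log(K/e)+1$ (the $+1$ yielding the strict inequality via $e^{-\mu}<1$) is precisely that derivation.
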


\nin (Of course this is only helpful if $K>e$.)

\medskip
We will make substantial use of
the following fundamental lower tail bound of
Svante Janson (\cite{Janson} or
\cite[Theorem 2.14]{JLR}), for which
we need a little notation.
Suppose $A_1\dots A_m$ are subsets of the
finite set $\gG$. Let $\Gamma_p$ be the random subset of $\Gamma$
gotten by including each $x$ ($ \in \Gamma$) with probability $p$,
these choices made independently.
For $j\in [m]$, let $I_j$ be the indicator of the event
$\{\gG_p\supseteq A_j\}$, and set $X=\sum I_j$,
$\mu = \mathbb{E} X =\sum_j\mathbb{E} I_j$
and
\beq{Delta}
\mbox{$\ov{\gD} = \sum\sum\{\mathbb{E} I_iI_j: A_i\cap A_j\neq\0  \}.$}
\enq
(Note this includes the diagonal terms.)

\begin{thm}\label{TJanson}
With notation as above, for any $t\in [0,\mu]$,
\[\Pr(X\leq \mu -t) \leq \exp[-\varphi(-t/\mu)\mu^2/\ov{\gD}] \leq \exp[-t^2/(2\ov{\gD})].\]
\end{thm}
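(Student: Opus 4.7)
The plan is to prove the first bound (the $\varphi$-version); the second then follows from the elementary inequality $\varphi(-x) \geq x^2/2$ on $[0,1]$, which one checks by noting both sides vanish at $0$ and comparing derivatives.

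The natural starting point is the exponential moment method: for any $\lambda > 0$, Markov gives
$$\Pr(X \leq \mu - t) \leq e^{\lambda(\mu - t)} \E[e^{-\lambda X}],$$
and the work reduces to bounding $\E[e^{-\lambda X}]$. If the $I_j$ were independent this would factor as $\prod_j \E[e^{-\lambda I_j}]$ and, at the optimal choice of $\lambda$, yield the classical Chernoff exponent $\mu \varphi(-t/\mu)$. Since the $I_j$ have positive correlations captured by $\ov{\gD}$, one should recover the same shape of bound but with $\mu$ replaced by the ``effective independent count" $\mu^2/\ov{\gD}$; heuristically, each cluster of mutually intersecting $A_j$'s contributes one genuinely free indicator, and $\mu^2/\ov{\gD}$ is the right measure of how many such clusters there are.

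My preferred route is first to settle the extreme case $t = \mu$, i.e.\ Janson's classical inequality $\Pr(X = 0) \leq \exp(-\mu^2/\ov{\gD})$. One processes the $I_j$ sequentially and bounds, for each $k$,
$$\Pr(I_k = 0 \mid I_1 = \cdots = I_{k-1} = 0) \leq 1 - \E I_k + \sum_{\substack{j < k \\ A_j \cap A_k \neq \0}} \frac{\E[I_j I_k]}{\Pr(I_1 = \cdots = I_{k-1} = 0)},$$
keeping the conditional denominator under control via Harris--FKG applied to the decreasing events $\{I_j = 0\}$. Taking $\log$, using $\log(1-x) \leq -x$, and telescoping over $k$ produces exactly the $-\mu^2/\ov{\gD}$ exponent, where the diagonal terms of \eqref{Delta} account for the leading $\sum_k \E I_k = \mu$ contribution and the off-diagonal terms absorb the correction. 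To lift from $t = \mu$ to general $t \in [0,\mu]$, I would apply the $X = 0$ bound to a $q$-thinning of the family $\{A_j\}$ (retain each $A_j$ independently with probability $q$, which can be realized as a Janson system on the enlarged ground set $\gG \sqcup [m]$), then combine with Markov in the form $\Pr(X \leq \mu - t) \leq (1-q)^{-(\mu - t)} \Pr(X^{(q)} = 0)$ and optimize $q$; this introduces the $\varphi(-t/\mu)$ factor by the same mechanism that produces $\varphi$ in the classical Chernoff argument.

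The main obstacle is the sequential conditional-probability estimate in the $X = 0$ case: one must verify that the Harris--FKG step produces a correction controlled by precisely $\ov{\gD}$ (no weaker quantity), and that conditioning on $\{I_j = 0 : j < k\}$ does not inflate the denominators relative to their unconditional values. Once this is in hand, the lifting to general $t$, the optimization in $q$, and the quadratic reduction are all routine.
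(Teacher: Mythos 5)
The paper does not prove this theorem; it cites it from Janson \cite{Janson} and \cite[Theorem 2.14]{JLR}, so the ``paper's own proof'' is really the one in those references. That proof is a single-shot exponential-moment argument: one sets $\psi(s)=-\log \E[e^{-sX}]$ and shows, via Harris--FKG and Jensen, the differential inequality $\psi'(s)\geq \mu e^{-s\ov{\gD}/\mu}$ for $s\geq 0$; integrating and optimizing $s$ in the Markov bound $\Pr(X\leq\mu-t)\leq e^{s(\mu-t)-\psi(s)}$ produces exactly $\exp[-\varphi(-t/\mu)\mu^2/\ov{\gD}]$. Your route --- sequential conditioning to handle $t=\mu$, then thinning to lift to general $t$ --- is genuinely different.

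Unfortunately step (1) of your plan, as described, does not establish what you claim. The sequential-conditioning bound
$\Pr(I_k=1\mid I_1=\cdots=I_{k-1}=0)\geq \E I_k-\sum_{j<k,\,A_j\cap A_k\neq\0}\E[I_jI_k]$,
followed by $\prod_k(1-r_k)\leq\exp[-\sum r_k]$, yields the \emph{classical} Janson bound
$\Pr(X=0)\leq\exp[-\mu+\gD]$ with $\gD=\tfrac12(\ov{\gD}-\mu)$. This is \emph{not} $\exp[-\mu^2/\ov{\gD}]$: the two coincide only at $\gD=0$, the former is strictly stronger for $\gD<\mu/2$, and it is strictly weaker (indeed vacuous) once $\gD>\mu$, precisely the regime where $\mu^2/\ov{\gD}$ is the whole point. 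So ``telescoping over $k$ produces exactly the $-\mu^2/\ov{\gD}$ exponent'' is incorrect; telescoping produces $-\mu+\gD$. (Your displayed conditional bound also carries a spurious division by $\Pr(I_1=\cdots=I_{k-1}=0)$, which only worsens the estimate, but that is secondary.) If you instead fold in the $q$-thinning to repair the $X=0$ case, you get, after optimizing $q$, $\exp[-\mu^2/(4\gD)]$ when $\mu\leq 2\gD$ and $\exp[-\mu+\gD]$ otherwise --- the right order of magnitude, but off by a factor of $2$ in the exponent when $\gD\gg\mu$, so still not the stated $\exp[-\mu^2/\ov{\gD}]$. The unified form with $\ov{\gD}=\mu+2\gD$ in the denominator, valid for all $\gD$, really does seem to require the generating-function/differential-inequality argument rather than a patchwork of $\exp[-\mu+\gD]$ and thinning.

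Your step (2), lifting from $X=0$ to $X\leq\mu-t$ via $\Pr(X\leq\mu-t)\leq(1-q)^{-(\mu-t)}\Pr(X^{(q)}=0)$ and optimizing $q$, is a legitimate and attractive device (it follows from $\Pr(X^{(q)}=0)=\E[(1-q)^X]\geq(1-q)^{\mu-t}\Pr(X\leq\mu-t)$), and in the independent case ($\ov{\gD}=\mu$) it does recover $\varphi(-t/\mu)\mu$. But since step (1) does not deliver $\exp[-\mu'^2/\ov{\gD}']$ for the thinned system, the two steps cannot be chained to give the theorem as stated. I would recommend either proving the differential inequality $\psi'(s)\geq\mu e^{-s\ov{\gD}/\mu}$ directly (decompose $X=X_k+Y_k$ with $X_k=\sum_{j:A_j\cap A_k\neq\0}I_j$, use FKG to split off $\E[e^{-sY_k}]\geq\E[e^{-sX}]$, and Jensen on $\E[e^{-sX_k}\mid I_k=1]$), or being explicit that your route proves a version with a slightly worse constant in the exponent, which would in fact suffice for every use of Theorem~\ref{TJanson} in this paper (all applications here have $\ov{\gD}\sim\mu$).
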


The next result is \cite[Lemma 2.46]{JLR} (originally \cite[Lemma 2]{Janson}).
\begin{lemma}\label{JUB}
For events $A_1\dots A_n$
in a probability space, and $\mu=\sum\Pr(A_i)$,
\begin{align*}
\Pr(\mbox{some $\mu+t$
independent $A_i$'s occur})
&\leq
\exp\left[-\mu\varphi(t/\mu)\right]\\
&\leq \exp\left[-t^2/(2(\mu+t/3))\right].
\end{align*}
\end{lemma}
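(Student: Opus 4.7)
The plan is to use the exponential moment method with the ``binomial'' MGF $(1+\lambda)^Y$ in place of the usual $e^{\lambda Y}$, where $Y$ denotes the maximum number of (mutually) independent $A_i$'s that occur simultaneously. Set $p_i=\Pr(A_i)$, so $\mu=\sum_i p_i$.

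The central step is the bound
\[
\E\bigl[(1+\lambda)^Y\bigr]\;\le\;e^{\lambda\mu}\qquad(\lambda\ge0).
\]
To see it, fix an outcome and let $I$ be any maximum independent family of occurring $A_i$'s, so that $|I|=Y$. Every subset $S\sub I$ is itself an independent family of events that all occur, hence
\[
(1+\lambda)^Y\;=\;\sum_{S\sub I}\lambda^{|S|}\;\le\;\sum_{S\sub[n]}\lambda^{|S|}\,\mathbf{1}\{S\text{ indep, and all }A_i~(i\in S)\text{ occur}\}.
\]
Taking expectations, applying mutual independence in the form $\Pr(\bigcap_{i\in S}A_i)=\prod_{i\in S}p_i$ for independent $S$, and then dropping the indicator to upper-bound, yields
\[
\E\bigl[(1+\lambda)^Y\bigr]\;\le\;\sum_{S\sub[n]}\lambda^{|S|}\prod_{i\in S}p_i\;=\;\prod_{i=1}^n(1+\lambda p_i)\;\le\;\exp(\lambda\mu).
\]

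Now I would apply Markov to get $\Pr(Y\ge\mu+t)\le(1+\lambda)^{-(\mu+t)}e^{\lambda\mu}$ for every $\lambda>0$, and optimize at $\lambda=t/\mu$ to obtain the first inequality $\exp[-\mu\varphi(t/\mu)]$. The second inequality $\mu\varphi(t/\mu)\ge t^2/(2(\mu+t/3))$ is a standard calculus fact, already used in Theorem~\ref{thm:Chernoff}, so nothing further is needed there.

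The only genuine obstacle is spotting the right MGF. The naive union bound over $k$-subsets at level $k=\mu+t$ gives only $\Pr(Y\ge k)\le e_k(p_1,\ldots,p_n)\le\mu^k/k!$, which is a factor of roughly $e^\mu$ larger than the target and loses the correct Poisson-tail rate. The polynomial MGF $(1+\lambda)^Y$ instead feeds the ``subsets of a maximum independent family'' combinatorics directly into the generating-function factorization $\prod_i(1+\lambda p_i)$, which is exactly what is needed to recover $\exp[-\mu\varphi(t/\mu)]$.
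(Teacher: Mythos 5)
Your proof is correct, and it is essentially the argument from the references the paper cites (Janson 1990, and Lemma~2.46 of Janson--\L{}uczak--Ruci\'{n}ski); the paper itself gives no proof, only the citation together with the remark that the proof there carries over from disjointly occurring events $\{\Gamma_p\supseteq A_i\}$ to general independent events. The polynomial moment generating function $(1+\lambda)^Y$ bounded by $\prod_i(1+\lambda p_i)\le e^{\lambda\mu}$ via the subsets-of-a-maximum-independent-family expansion, followed by Markov's inequality optimized at $\lambda=t/\mu$, is precisely the mechanism in those sources.
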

\nin
Note the bound here is the same as the one in \eqref{eq:ChernoffUpper}, which is
thus contained in Lemma~\ref{JUB}. 
(Strictly speaking, \cite{Janson} and \cite{JLR} state Lemma~\ref{JUB} only in setting
of Theorem~\ref{TJanson}, but the proofs there are valid for the version here.) 
Lemma \ref{JUB} implies the weaker but sometimes convenient
\beq{ET}
\Pr(\mbox{some $l$
independent $A_i$'s occur})~\leq ~\mu^l/l! ~\leq ~(e\mu/l)^l
\enq
observed in
\cite{Erdos-Tetali} (or see \cite[Lemma 8.4.1]{AS}).
%See following Theorem~\ref{thm:BKStochDom} for a little more on
%Lemma~\ref{JUB}.

\mn

The setting for the next %two theorems
theorem is a finite product probability space
$\Omega=\prod_{i=1}^t \Omega_i$ with each factor linearly ordered.
As usual an event $A \subseteq \Omega$ is \emph{increasing} if its indicator is a
nondecreasing function (with respect to the product order on $\Omega$) and \emph{decreasing} if
its complement is increasing.
The seminal ``correlation inequality" is essentially due to Harris \cite{Harris}:

\begin{thm}
\label{thm:Harris}
If $A,B\sub \gO$ are either both increasing or both decreasing, then
\[
\Pr(A \cap B) \geq \Pr(A)\Pr(B);\]
if one is increasing and the other decreasing then the inequality is reversed.
\end{thm}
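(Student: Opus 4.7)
My plan is to prove this FKG-type inequality by induction on the number $t$ of factors, reducing everything to a one-line observation on a single linearly ordered space. I would first establish the case where both $A$ and $B$ are increasing, and then derive the remaining three sign patterns by taking complements.

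For the base case $t=1$, let $f=\mathbf{1}_A$ and $g=\mathbf{1}_B$; the argument in fact works for any two nondecreasing real-valued functions on $\Omega_1$, which will matter below. Since $\Omega_1$ is linearly ordered and $f,g$ are nondecreasing, for every $x,y\in\Omega_1$ the two differences $f(x)-f(y)$ and $g(x)-g(y)$ have the same sign, so $(f(x)-f(y))(g(x)-g(y))\geq 0$. Summing this over $(x,y)$ weighted by $\Pr(x)\Pr(y)$ gives $2\mathbb{E}[fg]-2\mathbb{E}[f]\mathbb{E}[g]\geq 0$, which is exactly $\Pr(A\cap B)\geq \Pr(A)\Pr(B)$.

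For the inductive step, assuming the inequality for $t-1$ factors, I would condition on the last coordinate. Let $X_t$ denote the projection onto $\Omega_t$ and set $F(x_t)=\Pr(A\mid X_t=x_t)$ and $G(x_t)=\Pr(B\mid X_t=x_t)$. For each fixed $x_t$, the $x_t$-sections of $A$ and $B$ are increasing events in $\prod_{i<t}\Omega_i$, so the inductive hypothesis yields $\Pr(A\cap B\mid X_t=x_t)\geq F(x_t)G(x_t)$; averaging over $X_t$ gives $\Pr(A\cap B)\geq \mathbb{E}[FG]$. Moreover $F$ and $G$ are themselves nondecreasing functions of $x_t$, since raising $x_t$ only enlarges the section of an increasing event. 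Applying the $t=1$ case to the (real-valued) pair $F,G$ on $\Omega_t$ then gives $\mathbb{E}[FG]\geq \mathbb{E}[F]\mathbb{E}[G]=\Pr(A)\Pr(B)$, which chains with the previous bound to close the induction.

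The remaining three configurations would reduce to this one by complementation: if $A,B$ are both decreasing, apply the increasing case to $\ov A,\ov B$ and convert via $\Pr(\ov A\cap\ov B)=1-\Pr(A)-\Pr(B)+\Pr(A\cap B)$; and if $A$ is increasing and $B$ decreasing, apply it to $A$ and $\ov B$ and use $\Pr(A\cap B)=\Pr(A)-\Pr(A\cap\ov B)$, which flips the sense of the inequality as required. There is no real obstacle here; the only point to verify carefully is that the conditional probabilities $F,G$ inherit monotonicity in $x_t$, and that is immediate from the product-order definition of an increasing event.
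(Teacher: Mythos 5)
Your proof is correct, and it is the standard induction-on-coordinates proof of Harris' inequality (base case via Chebyshev's sum inequality, inductive step by conditioning on the last coordinate, with the extra observation that the base case must be stated for nondecreasing real-valued functions so it applies to the conditional probabilities $F,G$). The paper does not include a proof of this theorem at all; it states it as a standard tool and cites Harris \cite{Harris}, so there is nothing in the paper to compare against. Your complementation reductions for the other three sign patterns are also handled correctly.
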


\subsection{Density generics}

{\em From now on we use
$G$ for $G_{n,p}$ and $V$ for $[n]=V(G)$.}
%
%Throughout this section, $G=G_{n,q}$ and $V=V(G) $ ($=[n]$).
%
Theorems~\ref{thm:Chernoff} and \ref{thm:Chernoff'}
easily imply the next two standardish propositions, whose proofs we omit.

\begin{prop}
\label{prop:routine}
For $p \gg n^{-1}\log n$, w.h.p.
\[
\mbox{$|G| \sim n^2p/2~~$ and $~~d(v) \sim np \;\, \forall \ v \in V$.}
\]
\end{prop}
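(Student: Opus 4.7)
The plan is to treat the two assertions separately, each as a direct consequence of Theorem~\ref{thm:Chernoff} (applied to an appropriate binomial), with a union bound over vertices for the second.

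For the edge count, observe that $|G| \sim \mathrm{Bin}\bigl(\binom{n}{2},p\bigr)$, with mean $\mu = \binom{n}{2}p \sim n^2p/2$. I would fix an arbitrary $\eps > 0$ and apply \eqref{eq:ChernoffUpper}--\eqref{eq:ChernoffLower} with $t = \eps\mu$ to get
\[
\Pr\bigl(\bigl||G|-\mu\bigr| \geq \eps\mu\bigr) \leq 2\exp\bigl[-\Omega(\eps^2 n^2 p)\bigr],
\]
which tends to $0$ as soon as $n^2 p \to \infty$; this is comfortably implied by $p \gg n^{-1}\log n$.

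For the degree assertion, note that for each $v \in V$ we have $d(v) \sim \mathrm{Bin}(n-1, p)$ with mean $(n-1)p \sim np$. Fix $\eps>0$. Chernoff gives
\[
\Pr\bigl(|d(v) - (n-1)p| \geq \eps (n-1)p\bigr) \leq 2\exp\bigl[-\Omega(\eps^2 np)\bigr].
\]
Union-bounding over the $n$ vertices, the probability that some $d(v)$ fails the $\sim np$ estimate is at most $2n\exp[-\Omega(\eps^2 np)] = \exp[\log n - \Omega(\eps^2 np)]$. The hypothesis $p \gg n^{-1}\log n$ is exactly what makes this $o(1)$.

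There is really no obstacle here; the hypothesis $p \gg n^{-1}\log n$ is tailored precisely so that the union bound over vertices beats the Chernoff tail, and the edge-count piece needs much less. The only mild care is to run the argument for an arbitrary fixed $\eps$ (so the conclusions read as ``$\sim$'') and to intersect the two resulting almost-sure events.
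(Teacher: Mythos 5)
Your proof is correct and is exactly the routine Chernoff-plus-union-bound argument the paper has in mind; the paper omits the proof, noting only that Theorems~\ref{thm:Chernoff} and \ref{thm:Chernoff'} easily imply it.
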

\nin
(Of course the second conclusion implies the first, which just needs $p\gg n^{-2}$.)

\begin{prop}\label{density}
{\rm (a)}
For any $\eps>0$ there is a $K$
such that w.h.p. for all disjoint $S,T\sub V$ with
$|S|,|T|>Kp^{-1}\log n$
\[
\mbox{$|\nabla_G(S,T)| =(1\pm \eps) |S||T|p$}
% ~ $\forall$ disjoint $S,T\sub V$ with $|S|,|T|>Kp^{-1}\log n$}
\]
and
\[
\mbox{$|G[S]| =(1\pm \eps) \Cc{|S|}{2}p$.}
%~ $\forall$ $S\sub V$ with $|S|>Kp^{-1}\log n$}
\]
{\rm (b)}
For $K>3$ w.h.p.
\[
\mbox{$|G[S]|<  K|S|\log n$ for all $S\sub V$ with
$|S|\leq Kp^{-1}\log n$.}
\]
{\rm (c)}
For each $\eps>0$ there is a $K$ such that if $p>Kn^{-1}\log n$ then w.h.p.
\[
\mbox{$|\nabla_G(S)|=(1\pm \eps)|S|(n-|S|) ~~\forall S\sub V$.}
\]
\end{prop}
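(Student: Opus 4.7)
The plan for all three parts is the standard Chernoff-plus-union-bound recipe, tailored to the relevant binomial edge count in each case. I fix first a particular set (or ordered pair of disjoint sets) of a given size, bound the deviation of its edge count from its mean via Theorem~\ref{thm:Chernoff} or \eqref{ET}, then union-bound over all sets of that size and over sizes; the role of $K$ is to make each union bound converge. For (a), with $|S|=s$, $|T|=t$ both at least $Kp^{-1}\log n$, $|\nabla_G(S,T)|\sim\mathrm{Bin}(st,p)$, and Theorem~\ref{thm:Chernoff} gives $\Pr(||\nabla_G(S,T)|-stp|>\eps stp)\leq 2\exp[-\eps^2 stp/3]$. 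There are at most $\binom{n}{s}\binom{n}{t}\leq \exp[s\log(en/s)+t\log(en/t)]$ disjoint pairs of sizes $s,t$, and the hypothesis $s,t\geq Kp^{-1}\log n$ implies $stp\geq \tfrac{K}{2}(s+t)\log n$, so taking $K$ large (depending on $\eps$) lets the Chernoff exponent swamp the union bound with room to sum over the $O(n^2)$ size pairs. The argument for $|G[S]|$ is identical, with $\binom{s}{2}$ replacing $st$.

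For (b), fix $s\leq Kp^{-1}\log n$ and set $l=Ks\log n$; by \eqref{ET}, $\Pr(|G[S]|\geq l)\leq (e\binom{s}{2}p/l)^l\leq (esp/(2K\log n))^{Ks\log n}$. Multiplying by $\binom{n}{s}\leq (en/s)^s$ and taking logs yields a bound that decays exponentially in $s\log n$ whenever $sp$ is bounded away from $2K\log n/e$, and summing over $s$ then gives $o(1)$. Near the boundary $sp\asymp K\log n$ the Erdős–Tetali factor flattens to roughly $(e/2)^l$ and no longer decays, so there I would instead apply the factor-$O(1)$ Chernoff tail $\Pr(|G[S]|>c\mu)\leq\exp[-c'\mu]$ with $\mu=\binom{s}{2}p\asymp Ks\log n$, which (once $K$ is large enough) beats the $\binom{n}{s}\leq \exp[s\log n]$ union bound. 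The delicate point is verifying that the stated constant $K>3$ actually suffices in both sub-regimes, and this is the main obstacle—everywhere else the constants are generous.

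For (c), split on the size of $|S|$. When $|S|,|V\setminus S|\geq Kp^{-1}\log n$, apply (a) directly with $T=V\setminus S$, giving $|\nabla_G(S)|=(1\pm\eps)|S|(n-|S|)p$ w.h.p. When one side is small—say $|S|=O(p^{-1}\log n)$ and thus $|V\setminus S|=n(1-o(1))$—use the identity $|\nabla_G(S)|=\sum_{v\in S}d_G(v)-2|G[S]|$: Proposition~\ref{prop:routine} concentrates $\sum_{v\in S}d_G(v)$ around $|S|np$ (in fact one needs a strengthening valid for every $v$ simultaneously, which the proof of that proposition supplies), and $|G[S]|$ is negligible compared to $|S|np$ either by (b) or via the trivial $|G[S]|\leq\binom{|S|}{2}\ll |S|np$. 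Since $|S|\ll n$ here, $|S|np=(1+o(1))|S|(n-|S|)p$, and the stated estimate follows.
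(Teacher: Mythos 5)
The paper does not actually give a proof of this proposition: it says ``Theorems~\ref{thm:Chernoff} and \ref{thm:Chernoff'} easily imply the next two standardish propositions, whose proofs we omit.'' Your proposal is exactly the Chernoff-plus-union-bound argument the authors had in mind, and parts (a) and (c) are fine. A few remarks worth recording:

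\begin{itemize}
\item In (c), note the paper's displayed conclusion is missing a factor of $p$ (it should read $|\nabla_G(S)|=(1\pm\eps)|S|(n-|S|)p$, as you correctly write, and as is confirmed by the use in the proof of Lemma~\ref{LemCG}). Also, your parenthetical about ``needing a strengthening valid for every $v$ simultaneously'' is unnecessary: Proposition~\ref{prop:routine} already asserts $d(v)\sim np$ for all $v\in V$, which is the uniform statement. Finally, to make your case split airtight you should take the $K$ of (c) large compared to the $K$ you borrow from (a), so that the case where both $|S|$ and $|V\setminus S|$ fall below the threshold of (a) (which would force $np<2K_a\log n$) is excluded by the hypothesis $p>Kn^{-1}\log n$.
\item Your concern about the constant $3$ in (b) is legitimate, and worth being more precise about. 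First, the relevant event is vacuous unless $|S|>2K\log n$, since otherwise $\binom{|S|}{2}<K|S|\log n\leq l$; this already disposes of the regime you flagged as problematic for $\eqref{ET}$ whenever $p$ is bounded away from $0$. But the remaining regime is genuinely delicate: for $s$ near $Kp^{-1}\log n$ one has $l\approx 2\mu$, and the bound $\Pr(X\geq 2\mu)\leq\exp[-\varphi(1)\mu]=\exp[-(2\log 2-1)\mu]$ together with $\binom{n}{s}\leq\exp[s\log(en/s)]\leq\exp[(1+o(1))s\log n]$ and $\mu\approx Ks\log n/2$ requires $K>2/(2\log 2-1)\approx 5.2$, not $K>3$. (Replacing $\exp[-\varphi(1)\mu]$ by the sharper relative-entropy Chernoff bound $\exp[-N\,D(l/N\,\|\,p)]$ with $N=\binom{s}{2}$ helps when $s$ is close to $2K\log n$, but not when $s$ is larger, e.g.\ $p=1/\log^2 n$ and $s=4\log^3 n$ gives trouble for $K=4$.) So either the stated $3$ is a slight overclaim, or a cleverer bookkeeping than the straightforward union bound is needed. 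In any case this is immaterial for the paper: the only place (b) is invoked (in the proof of Lemma~\ref{prop:Claim1}, to get $|G[B]|\ll|B|\theta np$) requires only that the conclusion hold for \emph{some} fixed $K$, which your argument certainly delivers.
\end{itemize}

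In short: same approach as intended, essentially correct, and your flag on the constant in (b) is well taken.
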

\nin
%(It's not hard to derive (c) from (a) and (b).)

\begin{prop}
\label{cpts}
For fixed $\eps >0$ and $p \gg 1/n$, w.h.p.:
if $H \sub G$ satisfies
\beq{deltaF}
d_H(v) > (1-\eps)np/2 ~~~\forall \; v \in V,
\enq
then no component of $H$ has size less than $(1-2\eps)n/2$.
\end{prop}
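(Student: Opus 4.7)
The plan is to convert the ``small component'' conclusion into a pure $G$-event---an unlikely concentration of edges inside some not-too-large set---and then to dispose of it by Chernoff plus a union bound over vertex subsets.

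Suppose, toward contradiction, that there exist $H \sub G$ satisfying~\eqref{deltaF} and a component $S$ of $H$ with $s := |S| < (1-2\eps)n/2$. Because $S$ is a component of $H$, every $H$-neighbor of each $v\in S$ lies in $S$, so the hypothesis $d_H(v) > (1-\eps)np/2$ forces both $s > (1-\eps)np/2$ and $|N_G(v)\cap S| \geq d_H(v) > (1-\eps)np/2$ for every $v \in S$. Summing the second estimate via handshake yields
\[
|G[S]| \;\geq\; |H[S]| \;>\; s(1-\eps)np/4.
\]
Setting $s_0 := \lceil (1-\eps)np/2\rceil + 1$, it therefore suffices to prove that w.h.p.\ no $S \sub V$ satisfies both $s_0 \leq |S| < (1-2\eps)n/2$ and $|G[S]| > |S|(1-\eps)np/4$.

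For a fixed $S$ of size $s$ in this range, $|G[S]| \sim \mathrm{Bin}(\binom{s}{2},p)$ has mean $\mu_s := \binom{s}{2}p$, while the target $X_s := s(1-\eps)np/4$ exceeds $\mu_s$ by a multiplicative factor $r_s := X_s/\mu_s = (1-\eps)n/(2(s-1))$, always at least $(1-\eps)/(1-2\eps) > 1$ and as large as $\Theta(1/p)$ near $s = s_0$. Theorem~\ref{thm:Chernoff} then gives
\[
\Pr\!\bigl(|G[S]| > X_s\bigr) \;\leq\; \exp\!\bigl(-\mu_s\,\varphi(r_s - 1)\bigr),
\]
so the problem reduces to showing $\sum_{s_0 \leq s < (1-2\eps)n/2}\binom{n}{s}\exp(-\mu_s\varphi(r_s - 1)) = o(1)$. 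I would split this sum into two regimes. For $s \geq n/4$ the factor $r_s$ is an absolute constant $>1$, whence $\varphi(r_s - 1) \geq c_\eps > 0$ and $\mu_s\varphi(r_s-1) = \Omega_\eps(n^2p)$; this dominates $\binom{n}{s} \leq 2^n$ whenever $np \to \infty$, so the partial sum is $o(1)$. For $s < n/4$ the ratio $r_s$ can be large, and I would use the identity $\mu_s\varphi(r_s-1) = X_s\log r_s - X_s + \mu_s$ together with $\log r_s \geq \log(n/(2s))$ and $\binom{n}{s} \leq (en/s)^s$; the log of the $s$-th term is then
\[
s\log(en/s) - X_s\bigl(\log(n/(2s)) - 1\bigr) + X_s - \mu_s,
\]
which for $np$ exceeding a constant is negative of order $-\Theta(snp\log(n/s))$. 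Summing over $s$ costs only a further factor $n$, which is absorbed since $(np)^2\log(1/p) \gg \log n$ under $p \gg 1/n$.

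The main obstacle is arranging the Chernoff estimate uniformly in $s$: the cheap bound $\exp(-\Omega_\eps(\mu_s))$ is too weak near $s = s_0$ (where $\mu_s$ itself is tiny), while the large-deviation form $\exp(-\Omega(X_s\log r_s))$ becomes vacuous as $r_s \to 1^+$ near $s = (1-2\eps)n/2$. The two-case split above handles both extremes with minimal fuss; after that, the argument is purely a summation check.
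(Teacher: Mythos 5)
Your argument follows essentially the same route as the paper's: recast the small-component event as the concentration event $|G[S]| > K\mu_S$ with $K = (1-\eps)n/(2(|S|-1))$ bounded away from $1$, apply Chernoff, and union-bound over $S$, splitting the analysis according to whether $K$ is bounded or large (the paper splits at $K = e^2$, you at $|S| = n/4$; either works). One small algebraic slip: your displayed expression carries a spurious trailing $+X_s$---the $-1$ inside $\bigl(\log(n/(2s))-1\bigr)$ already accounts for the $+X_s$ coming from $\mu_s\varphi(r_s-1) = X_s\log r_s - X_s + \mu_s$---and as written the expression is actually positive for $s$ near $n/4$; the corrected bound $s\log(en/s) - X_s\bigl(\log(n/(2s)) - 1\bigr) - \mu_s$ is negative of the order you claim and the summation check then goes through.
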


\begin{proof}
For a given $W\sub V$ of size $w< (1-2\eps)n/2$, let $\chi =|G[W]|$.
Then $\mu:=\mathbb{E}\chi = \binom{w}{2}p< w^2p/2$, while if $W$ is a component of an $H$
satisfying \eqref{deltaF} then
\[
\chi \geq |H[W]| > w(1-\eps)np/4 > \tfrac{(1-\eps)n}{2w}\mu =:K\mu.
\]
But (since $K>(1-\eps)/(1-2\eps)=1+\gO(1)$)
Theorems~\ref{thm:Chernoff} and \ref{thm:Chernoff'} give
\[
\gc_w:=\Pr(\chi >K\mu) <\left\{\begin{array}{ll}
\exp[-\gO(\mu)]&\mbox{if $K<e^2$ (say),}\\
\exp[-K\mu\log (K/e)]&\mbox{otherwise.}
\end{array}\right.
\]
Thus, with sums over $w\in (0,(1-2\eps)n/2)$,
the probability that some $H$ as in the lemma admits a component of size less
than $(1-2\eps)n/2$ is less than
\[
\sum\Cc{n}{w} \gc_w < \sum\exp[w\log(en/w)]\gc_w,
\]
which for $p\gg 1/n$ is easily seen to be $o(1)$.
\end{proof}

Finally, we need to know a little about the adjacency matrix, $A(G)$, of $G$.
A version of \eqref{eq:eigvals} below was proved in \cite{Furedi-Komlos1980}
(see also \cite{Alon-Kriv-Vu2002}) and \eqref{eq:eigvecv1} is shown (e.g.) in \cite{Mitra2009}.

\begin{prop}
\label{prop:spectrum}
Let $\lambda_1 \geq \lambda_2 \geq \ldots \geq \lambda_n$ be the eigenvalues of $A(G)$
and $v_1, v_2, \ldots, v_n$ associated orthonormal eigenvectors, say with
$\max_j v_{1,j} > 0$. If $p \gg n^{-1}\log n$, then w.h.p.
\begin{align}
\label{eq:eigvals}
\lambda_1 \sim np ~~~\text{ and }~~~ \max\{\lambda_2, |\lambda_n|\} < (2+o(1))\sqrt{np}.
\end{align}
If $p > n^{-1}\log^6 n$, then w.h.p.
\begin{align}
\label{eq:eigvecv1}
\max_j v_{1,j} < (1+o(1))\min_j v_{1,j}.
\end{align}
\end{prop}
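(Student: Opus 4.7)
The plan is to work throughout with the decomposition $A(G) = p(J - I) + M$, where $J$ is the $n \times n$ all-ones matrix and $M := A(G) - p(J - I)$ is a symmetric random matrix with independent above-diagonal entries of mean zero, variance $p(1-p)$, and magnitude at most $1$. The deterministic summand $p(J-I)$ has top eigenvalue $(n-1)p$ realized by $\mathbf{1}/\sqrt{n}$ and all other eigenvalues equal to $-p$, so its spectral gap is $np$; both \eqref{eq:eigvals} and \eqref{eq:eigvecv1} reduce to treating $M$ as a perturbation.

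For \eqref{eq:eigvals}, the workhorse is the F\"uredi--Koml\'os-type bound
\[\|M\|_{\mathrm{op}} \le (2+o(1))\sqrt{np} \quad \text{w.h.p.,}\]
valid when $p \gg n^{-1}\log n$; one proves this via truncation followed by a high-moment computation of $\mathbb{E}\,\mathrm{tr}(M^{2k})$ for $k \asymp \log n$, as in \cite{Furedi-Komlos1980, Alon-Kriv-Vu2002}. Weyl's inequality then gives $\lambda_1 = (n-1)p \pm \|M\|_{\mathrm{op}} \sim np$ and $\max(\lambda_2, |\lambda_n|) \le p + \|M\|_{\mathrm{op}} \le (2+o(1))\sqrt{np}$, which is \eqref{eq:eigvals}.

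For \eqref{eq:eigvecv1}, the first step is an $\ell_2$ bound. Fixing the sign of $v_1$ using the normalization $\max_j v_{1,j} > 0$, Davis--Kahan with the spectral gap from part (i) gives $\|v_1 - \mathbf{1}/\sqrt{n}\|_2 = O(\|M\|_{\mathrm{op}}/(np)) = O((np)^{-1/2})$. To upgrade to the entrywise statement, I would bootstrap through the eigenvalue identity
\[\lambda_1 v_{1,i} \;=\; \sum_{j : ij \in E} v_{1,j} \;=\; d(i)/\sqrt{n} + \sum_{j \sim i}\bigl(v_{1,j} - 1/\sqrt{n}\bigr).\]
Since $\lambda_1 \sim np$ and $d(i) \sim np$ uniformly (Proposition~\ref{prop:routine}), the leading term already yields $(1+o(1))/\sqrt{n}$ in each $v_{1,i}$; the task is to show the residual sum on the right is $o(p\sqrt{n})$ uniformly in $i$, which can be done iteratively: if the current bound is $\|v_1 - \mathbf{1}/\sqrt{n}\|_\infty \le \delta_k$, concentration of the random linear form $\sum_{j \sim i}(v_{1,j} - 1/\sqrt{n})$ around its (small) expectation produces a better $\delta_{k+1}$, and a few rounds suffice.

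The main obstacle is the dependence between the neighborhood of $i$ in $G$ and the vector $v_1$, which is a function of the entire matrix $A$. The standard remedy, and the one I would adopt, is a leave-one-out decoupling: let $A^{(i)}$ be $A$ with row and column $i$ zeroed out, and let $v_1^{(i)}$ be its top eigenvector. Since $A - A^{(i)}$ has rank at most $2$ and operator norm $O(\sqrt{np})$, Davis--Kahan again yields $\|v_1 - v_1^{(i)}\|_2 = O((np)^{-1/2})$; because $v_1^{(i)}$ is independent of the $i$-th row of $A$, Chernoff (Theorem~\ref{thm:Chernoff}) controls $\sum_{j \sim i} v_{1,j}^{(i)}$ for each $i$ sharply, and the $\log^6 n$ slack in $p > n^{-1}\log^6 n$ is what allows the iterated concentration and perturbation bounds to survive a union bound over $i \in [n]$. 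This is essentially the argument of Mitra~\cite{Mitra2009}.
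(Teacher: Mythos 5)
The paper does not give its own proof of Proposition~\ref{prop:spectrum}; it simply cites \cite{Furedi-Komlos1980} (and \cite{Alon-Kriv-Vu2002}) for \eqref{eq:eigvals} and \cite{Mitra2009} for \eqref{eq:eigvecv1}, and your outline is a faithful sketch of exactly what those references do: the trace-moment (F\"uredi--Koml\'os) bound on $\|A - p(J-I)\|_{\mathrm{op}}$ combined with Weyl's inequality for the eigenvalues, and a Davis--Kahan $\ell_2$ estimate upgraded to an entrywise one via the eigenvalue equation, leave-one-out decoupling, and iterated concentration for the eigenvector. So you are taking the same route the paper endorses, and the outline is correct.
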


\subsection{Path counts}
\label{subsec:Paths}

\mn
This section discusses what can be said about the
numbers of paths of various lengths joining pairs of vertices in a random graph.

\mn
\emph{Notation.} For $l \geq 1$ and (distinct) $x,y \in V$,
we use $P^l(x,y)$ for the set of $P_l$'s ($l$-edge paths) in $G$ joining $x$ and $y$,
$\tau^l(x,y) $ for $ |P^l(x,y)|$, and $\sigma^l(x,y) $ for
the maximum size of a collection of internally disjoint $P_l$'s
of $G$ joining $x$ and $y$.
(Though $l=1$ is uninteresting, it's convenient to allow this.)

In this section
we use $V(P)$
for the set of {\em internal} vertices of a path $P$ and
write $\gG^l_{x,y}$ for the graph on $P^l(x,y)$ with $P\sim Q$
iff $V(P)\cap V(Q)\neq\0$.

\medskip
Conveniently, most of what we need here has been worked out
(in far greater generality)
by Joel Spencer
in \cite{Spencer}
(see also \cite[Section 8.5]{AS}), and we begin with two special cases of
what's proved there.
\begin{thm}
\label{thm:Spencer}
For any $l\geq 2$ and $\eps>0$ there exists $K$ such that if $n^{l-1}p^l \geq K \log n$, then w.h.p.
\begin{align}
\label{eq:Spencer}
\tau^l(x,y) = (1 \pm \eps) n^{l-1}p^l ~~~~ \forall \, \{x,y\}\in \Cc{V}{2}.
\end{align}
\end{thm}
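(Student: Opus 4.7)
The theorem is a uniform concentration statement for $\tau^l(x,y)$ around its mean $\mu := \E[\tau^l(x,y)]$, so my plan is to fix a pair $\{x,y\}$, prove $\Pr(|\tau^l(x,y)-\mu|>\eps\mu)=o(n^{-2})$, and union-bound over the $\binom{n}{2}$ pairs. The expectation is immediate: each ordered sequence $(v_1\dots v_{l-1})$ of distinct vertices in $V\sm\{x,y\}$ determines a potential $l$-path from $x$ to $y$, present in $G$ with probability $p^l$, so $\mu=(n-2)_{l-1}p^l\sim n^{l-1}p^l$, and the hypothesis gives $\mu\geq (1-o(1))K\log n$. A short calculation from $n^{l-1}p^l\geq K\log n$ also yields $np \gg \log n$ (indeed $np\geq n^{1/l}(K\log n)^{1/l}$), so both of $\mu$ and $np$ exceed $K\log n$ (up to constants) for $n$ large.

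For the lower tail I apply Janson's inequality (Theorem~\ref{TJanson}) to the events $\{P\sub G\}$ over the potential $l$-paths $P$ from $x$ to $y$. The main work is to bound
\[ \ov{\gD} \;=\; \sum_{(P,Q):E(P)\cap E(Q)\neq\0} p^{|E(P)\cup E(Q)|}. \]
Classify ordered pairs $(P,Q)$ by the number $s\in\{1\dots l\}$ of shared edges (and their arrangement inside $P$): a pair sharing $s$ edges contributes $p^{2l-s}$, and a direct count gives that the total contribution from a given $s$ is $O(\mu^2/(np)^s)$—each shared edge effectively replaces a factor of $np$ by $1$ in the ``independent" estimate. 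Summing over $s$ yields $\ov{\gD}=O(\mu+\mu^2/(np))$, hence $\mu^2/\ov{\gD}=\gO(\min(\mu,np))=\gO(K\log n)$, and Janson gives a lower-tail bound $\exp[-\gO(\eps^2K\log n)]=o(n^{-2})$ for $K$ large in $l$ and $\eps$.

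For the upper tail I would use the moment method: expand $\E[\tau^r]$ as a sum over $r$-tuples of potential paths, observe that pairwise edge-disjoint $r$-tuples contribute $(1+o(1))\mu^r$ while each edge-overlap within the tuple costs a factor of $O(1/(np))$, and deduce $\E[\tau^r]\leq \mu^r(1+O(r^2/(np)))^{r}$ for $r$ up to about $\log n$. Markov's inequality at $r\asymp \log n$ then gives $\Pr(\tau^l(x,y)\geq (1+\eps)\mu)=o(n^{-2})$. (Alternatively, one can apply Lemma~\ref{JUB} to edge-disjoint paths---edge-disjoint subgraph events being probabilistically independent---combined with an argument showing total and edge-disjoint path counts essentially agree in the present regime.)

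The main obstacle is the combinatorial accounting in both the $\ov{\gD}$ estimate and the higher-moment expansion: one must enumerate the ways two (or $r$) $l$-paths between fixed endpoints can share edges and vertices, and bound each overlap class by a suitable power of $n$ and $p$. Once one sees that each additional overlap effectively costs a factor $1/(np)$ relative to the independent case, a geometric sum controls everything. This is the essence of the extension-counting framework of~\cite{Spencer}, of which the $l$-path case is a clean special instance.
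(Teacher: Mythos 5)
The paper does not actually prove Theorem~\ref{thm:Spencer}: it cites Spencer \cite{Spencer} (and \cite{AS}), and the only account of a proof is the short \emph{Remark} following Proposition~\ref{prop:tausig}. Your lower-tail argument---Janson's inequality with the estimate $\ov{\gD}=O(\mu+\mu^2/(np))$, hence $\mu^2/\ov{\gD}=\gO(K\log n)$---is exactly what that remark indicates, and the pairwise-overlap bookkeeping you sketch matches the calculation the paper carries out explicitly in the proof of Proposition~\ref{prop:lightedgesJanson}. Your primary upper-tail route (raw moments at $r\asymp\log n$ plus Markov) is, however, a genuinely different approach from the one the paper describes: the remark says to use Proposition~\ref{prop:tausig} to replace $\tau^l$ by $\sigma^l$ (their difference being $O(1)$ w.h.p.) and then apply Lemma~\ref{JUB} to the now-independent events associated with an internally disjoint family---this is precisely your parenthetical alternative. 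That route is cleaner here: the combinatorics of overlapping paths is handled once and for all by Proposition~\ref{prop:tausig}, which the paper needs anyway, and Lemma~\ref{JUB} then delivers a Chernoff-strength upper tail immediately. Two cautions about the moment route as you have written it: the inequality $\E[\tau^r]\le\mu^r(1+O(r^2/(np)))^r$ cannot hold for \emph{raw} moments when $r$ is comparable to $\mu$ (even for a sum of independent indicators the raw $r$-th moment then exceeds $\mu^r$ substantially)---you really want falling-factorial moments, i.e.\ sums over $r$-tuples of \emph{distinct} paths, together with the constraint $r\ll\mu$, which forces $K\gg 1/\eps$ rather than merely ``$r\asymp\log n$''; and the ``each overlap costs $1/(np)$'' accounting for $r$-tuples requires controlling overlap configurations of arbitrary shape among $r$ paths, which is considerably more delicate than the pairwise $\ov{\gD}$ estimate and is precisely the work that the $\tau^l\to\sigma^l$ reduction of Proposition~\ref{prop:tausig} is designed to package.
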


\begin{prop}
\label{prop:tausig}
For any $l \geq 1$ and $\gd>0$, if
$n^{2l-3}p^{2l-1}<n^{-\gd}$ then w.h.p.
\begin{align}
\label{eq:tausig}
\tau^i(x,y)-\sigma^i(x,y) < C ~~~~ \forall \, \{x,y\}\in\Cc{V}{2}, ~ i \in [l],
\end{align}
where $C$ depends only on $l$ and $\gd$.
\end{prop}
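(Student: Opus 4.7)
The plan is to reduce $\tau^i - \sigma^i$ to a count of ``overlapping'' pairs of $i$-paths, bound the expectation of that count using the hypothesis, and then promote this to a uniform constant upper bound via a factorial-moment calculation and union bound. Since $\sigma^i(x,y) = \alpha(\Gamma^i_{x,y})$ and $|V(\Gamma)| - \alpha(\Gamma) \leq |E(\Gamma)|$ for any graph (choose one endpoint per edge as a vertex cover), I would start from
\[
\tau^i(x,y) - \sigma^i(x,y) \leq M^i(x,y) := |E(\Gamma^i_{x,y})|,
\]
where $M^i(x,y)$ counts unordered pairs $\{P, Q\}$ of distinct $i$-paths between $x$ and $y$ in $G$ with $V(P) \cap V(Q) \neq \0$. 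It then suffices to show that w.h.p.\ $M^i(x,y) < C$ for all $\{x,y\}\in \Cc{V}{2}$ and all $i \in [l]$, for some constant $C = C(l, \gd)$.

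To bound $\E[M^i(x,y)]$ I would enumerate overlap types: a contributing pair $(P,Q)$ with $k \geq 1$ shared internal vertices and $j \geq 0$ shared edges has $P \cup Q$ with $m = 2i - k$ vertices and $e = 2i - j$ edges, giving $O(n^{2i - k - 2})$ embeddings in $K_n$ (with $x, y$ fixed), each appearing in $G$ with probability $p^{2i - j}$. Because $P \cap Q$ is a disjoint union of subpaths of $P$ and $P \neq Q$, a brief check gives $j \leq k$; summing over types shows that $(k,j) = (1,1)$ dominates, so $\E[M^i(x,y)] = O(n^{2i-3} p^{2i-1})$. Combining the identity $n^{2i - 3}p^{2i-1} = n^{2l-3} p^{2l-1} \cdot (np)^{-2(l-i)}$ with the hypothesis and a short case split on whether $np \geq 1$, one obtains $\E[M^i(x,y)] \leq n^{-\gd'}$ uniformly over $i \in [l]$, for some $\gd' = \gd'(l, \gd) > 0$. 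To promote this to a tail bound I would compute the $C$-th factorial moment, which counts ordered $C$-tuples of distinct overlapping pair-configurations present in $G$; the dominant contribution, from $C$-tuples whose pair-configurations are pairwise vertex-disjoint apart from $\{x, y\}$, is $(1 + o(1))C!\,(\E M^i)^C = O(n^{-\gd' C})$, while merged configurations give strictly less (their union is denser but has fewer embeddings). Hence $\Pr[M^i(x,y) \geq C] = O(n^{-\gd' C})$, and a union bound over the $\Cc{n}{2} l$ choices of $(\{x,y\}, i)$ finishes the proof once $C > 2/\gd'$.

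The main obstacle will be the overlap-type enumeration in the middle step: one must verify that no exotic pattern (many components in $P \cap Q$, subpaths touching $x$ or $y$, concentrated shared edges near an endpoint, etc.) produces a contribution to $\E[M^i(x,y)]$ exceeding the main term $n^{2i-3}p^{2i-1}$. The key combinatorial input is the inequality $j \leq k$ for $P \neq Q$: because $P \cap Q$ is a disjoint union of subpaths of $P$, the only way to achieve $j = k+1$ would be for $P \cap Q$ to be a single subpath from $x$ to $y$ covering all $k$ shared internal vertices, which forces $P = Q$. A secondary care point is the factorial-moment step, where one needs the routine but careful observation that amalgamating pair-configurations strictly increases edge-to-vertex density, so that disjoint $C$-tuples indeed dominate the sum.
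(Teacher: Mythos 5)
Your reduction $\tau^i - \sigma^i \leq |E(\Gamma^i_{x,y})|$ is exactly the paper's starting point, but from there you diverge, and your route has a genuine gap in the factorial-moment step.

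The claim that ``merged configurations give strictly less (their union is denser but has fewer embeddings)'' is false in a relevant part of the parameter range, and it is doing all the work. Consider the ``star'' configuration: a single path $P$ together with $C$ paths $Q_1,\ldots,Q_C$, each meeting $P$ in one vertex (a $(1,1)$ overlap), with the $Q_a$ otherwise disjoint. This yields $C$ distinct edges of $\Gamma^l_{x,y}$, all sharing the vertex $P$. Counting free internal vertices and edges, its expected multiplicity is $\asymp n^{(l-1)+C(l-2)}\,p^{\,l+C(l-1)}$, while the pairwise-disjoint $C$-tuple you take as dominant contributes $\asymp \bigl(n^{2l-3}p^{2l-1}\bigr)^C$. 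The ratio (star over disjoint) is
\[
n^{(l-1)(1-C)}\,p^{\,l(1-C)} \;=\; \bigl(n^{l-1}p^{l}\bigr)^{1-C},
\]
which is $\gg 1$ whenever $n^{l-1}p^l < 1$ --- a regime entirely compatible with the hypothesis $n^{2l-3}p^{2l-1}<n^{-\gd}$ (e.g.\ $np$ of order a small power of $n$, or $np=O(1)$). So for such $p$ the star term dominates the factorial moment, and your estimate $(1+o(1))C!(\E M^i)^C$ is simply not an upper bound on it. Denser does not mean fewer in expectation here: merging saves $(l-1)$ vertices but also $l$ edges, and when $p$ is small enough the gain of $p^{-l}$ beats the loss of $n^{-(l-1)}$. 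The conclusion that the union bound survives happens to be true for the star and for related clustered configurations, but that has to be verified --- and doing so essentially forces you into the case analysis the paper carries out. A secondary (non-fatal) instance of the same issue appears already in the first-moment step: the claim that $(k,j)=(1,1)$ dominates $\E[M^i]$ also fails when $np<1$, though $\E[M^i]=n^{-\Omega(1)}$ still holds.

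The paper sidesteps all of this by not touching the factorial moment. It observes that $|E(\Gamma^l_{x,y})|$ is $O(1)$ provided (i) every component of $\Gamma^l_{x,y}$ has $O(1)$ vertices and (ii) the maximum induced matching in $\Gamma^l_{x,y}$ has size $O(1)$, and rules out violations of (i) and (ii) separately by first-moment union bounds over explicit structures (chains of overlapping paths for (i); families of vertex-disjoint overlapping pairs for (ii)). Your disjoint $C$-tuples are precisely condition (ii); the merged/star/chain configurations you tried to wave away are precisely condition (i). Both are needed, and the paper's decomposition makes that split explicit rather than hoping one side dominates.
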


\nin
We note for use below that the assumption on $p$ in Proposition~\ref{prop:tausig} implies
\beq{nl2}
n^{l-2}p^{l-1} < n^{-\gz},
\enq
with $\gz = (1+\gd(l-1))/(2l-1)$ ($=\gO(1)$).
Strictly speaking, the proposition is a little stronger than what one gets from
\cite{Spencer}, where the assumption would be
$n^{l-1}p^l = O(\log n)$.
(The
$n^{2l-3}p^{2l-1}$ is more or less the expected number of
non-edge-disjoint pairs of paths joining a given $x$ and $y$.)

\medskip
Proposition~\ref{prop:tausig}, though not difficult,
is a key point in Spencer's proof of Theorem~\ref{thm:Spencer},
and from our perspective is in a sense the main point, since, as
indicated in the remark below, it easily gives the latter
when combined with Lemma~\ref{JUB}.
Since the proof of the proposition itself is not so easy to extract from Spencer's presentation
(see his ``third part" on p. 253), we next sketch an argument along lines similar to his
for the present situation.

\begin{proof}[Proof of Proposition~\ref{prop:tausig}.]
It is enough to handle $i=l$ (since the assumption on $p$ implies
a stronger assumption when we replace $l$ by $i<l$).
Noting that $\tau^l(x,y)-\gs^l(x,y)\leq |E(\gG^l_{x,y})|$,
we find that \eqref{eq:tausig} (with an appropriate $C$) holds at $x,y$ provided
\begin{itemize}
\item[(i)] the maximum number of vertices in a component of $\gG^l_{x,y}$ is $O(1)$ and
\item[(ii)] the maximum size of an induced matching in $\gG^l_{x,y}$ is $O(1)$;
\end{itemize}
so we want to say that w.h.p. these conditions hold for all $x,y$.
(Of course replacing (i) by an $O(1)$ bound on degrees would also suffice.)

\medskip
For (i) we show that, for some fixed $M$, w.h.p. there do not exist
$x,y$ and a collection, $Q_1\dots Q_M$, of $P_l$'s joining $x$ and $y$ such that, for
$i\geq 2$, $V(Q_i)$ meets, but is not contained in,
$\cup_{j<i}V(Q_j)$.
This bounds (by $(l-2)M+1$) the number of internal vertices (of $G$) in the paths belonging
to a component of $\gG^l_{x,y}$, so gives (i).

Suppose $Q_1\dots Q_M$ are $P_l$'s joining $x$ and $y$, with
$R_i=\cup_{j\leq i}Q_j$ and, for $i\geq 2$,
$|E(Q_i)\sm E(R_{i-1})|=b_i$ and
$|V(Q_i)\sm V(R_{i-1})|=a_i\in [1,l-2]$.
Then
$b_i\geq a_i+1$ and $a_i\leq l-2$ imply
$n^{a_i}p^{b_i}\leq n^{l-2}p^{l-1}$
(for $i\geq 2$)
and
\beq{nVRM}
n^{|V(R_M)|}p^{|E(R_M)|} \leq np(n^{l-2}p^{l -1})^M,
\enq
which is thus an upper bound on the probability of finding, for a given $x,y$,
$(Q_1\dots Q_M)$ as above of a given isomorphism type (defined in the obvious way).
So
the probability that there are such $Q_i$'s
for {\em some} $x,y$ (and some isomorphism type) is
$O(n^3p(n^{l-2}p^{l -1})^M )=O(n^3pn^{-\gz M})$
(see \eqref{nl2}),
so is $o(1)$ for large enough $M$.

\medskip
The argument for (ii) is similar.
Here we want to rule out, again for some fixed $M$, existence of $P_l$'s, say
$Q_1,R_1\dots Q_M,R_M$, joining some specified $x,y$, with $V(Q_i)\cap V(R_i)\neq\0$ and
the $V(Q_i)$'s and $V(R_i)$'s otherwise disjoint.
A discussion like the one above shows that for any such sequence, with
$|\cup_i(E(Q_i)\cup (E(R_i))|=b$ and $|\cup_i(V(Q_i)\cup (V(R_i))|=a$, we have
\beq{napb}
n^a p^b < (n^{2l-3}p^{2l-1})^M < n^{-M\gd},
\enq
which bounds the probability of existence by $O(n^{2-M\gd})$.
\qedhere
\end{proof}

\nin
{\em Remark.}
The lower bound in Theorem~\ref{thm:Spencer} is given by
Theorem~\ref{TJanson} (a recent development at the time).
The main issue for the upper bound is handling $p$ with
$n^{l-1}p^l \asymp\log n$, for which
Proposition~\ref{prop:tausig} allows replacing $\tau^l$ by $\gs^l$.
This is then naturally handled by Lemma~\ref{JUB},
replacing Spencer's nice, if slightly \emph{ad hoc} approach
based on maximal disjoint families.

\medskip
Theorem~\ref{thm:Spencer} and Proposition~\ref{prop:tausig}
(with bits of Section~\ref{Deviation and correlation})
easily imply the following bounds on the
$\tau^l(x,y)$'s.

\begin{cor}\label{ubtau}
W.h.p. for all (distinct) vertices $x,y$,
\begin{align}
\tau^l(x,y) \sim n^{l-1}p^l ~~
&~~\mbox{if $n^{l-1}p^l = \omega(\log n) $},
\label{eq:tausummarybig} \\
\tau^l(x,y) = O(\log n) ~~
&~~\mbox{if $n^{l-1}p^l = O(\log n) $},
\label{eq:tausummarymiddle} \\
\tau^l(x,y) = O(1) ~~
&~~\mbox{if $n^{l-1}p^l <n^{-\gO(1)}$}.
\label{eq:tausummarysmall}
\end{align}
\end{cor}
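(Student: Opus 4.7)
\medskip
\nin\textbf{Proof proposal.} The first estimate \eqref{eq:tausummarybig} is exactly Theorem~\ref{thm:Spencer}, and the $l=1$ case of the other two is trivial; we may thus assume $l\geq 2$. The plan for the remaining bounds is to use Proposition~\ref{prop:tausig} to replace $\tau^l(x,y)$ by $\gs^l(x,y)$ (up to an additive constant), and then control $\gs^l(x,y)$ by independent-events tail bounds. The point that lets us do this is that, for $l\geq 2$, any two internally disjoint $l$-paths between $x$ and $y$ are automatically edge-disjoint (a shared edge would force a shared internal vertex); so if the events $A_P=\{P\sub G\}$ are indexed by all $l$-paths in $K_n$ between $x$ and $y$, then $\gs^l(x,y)$ is bounded above by the largest number of mutually independent $A_P$'s that occur, a quantity to which Lemma~\ref{JUB} and \eqref{ET} apply with $\mu=\sum_P\Pr(A_P)\sim n^{l-1}p^l$.

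In the middle regime, $n^{l-1}p^l=O(\log n)$ forces $p=n^{-(l-1)/l+o(1)}$, whence a brief calculation gives $n^{2l-3}p^{2l-1}=n^{-1/l+o(1)}<n^{-\gO(1)}$; so Proposition~\ref{prop:tausig} applies and yields $\tau^l(x,y)\leq \gs^l(x,y)+O(1)$ w.h.p.\ for all $x,y$. Applying Lemma~\ref{JUB} with $t=K\log n$ for a sufficiently large constant $K$ then gives
\[
\Pr\bigl(\gs^l(x,y)\geq \mu+K\log n\bigr)\leq \exp\bigl[-\gO(K\log n)\bigr]=o(n^{-3}),
\]
and a union bound over the $\Cc{n}{2}$ pairs yields \eqref{eq:tausummarymiddle}.

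In the small regime, $n^{l-1}p^l<n^{-\gO(1)}$ likewise gives $n^{2l-3}p^{2l-1}<n^{-\gO(1)}$, so Proposition~\ref{prop:tausig} again yields $\tau^l(x,y)-\gs^l(x,y)=O(1)$ w.h.p. Here $\mu=o(1)$, and for a constant $k_0$ \eqref{ET} gives
\[
\Pr\bigl(\gs^l(x,y)\geq k_0\bigr)\leq (e\mu/k_0)^{k_0}=O\bigl(n^{-k_0\gO(1)}\bigr);
\]
choosing $k_0$ large enough makes this $o(n^{-3})$, and a union bound over pairs produces \eqref{eq:tausummarysmall}.

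The only genuinely nontrivial ingredient is Proposition~\ref{prop:tausig} itself, already established; the rest is routine bookkeeping with the tail bounds of Section~\ref{Deviation and correlation}. The one spot calling for (brief) care is verifying the hypothesis of Proposition~\ref{prop:tausig} in each of the two regimes of interest, which, as indicated, reduces to the elementary power-counting above.
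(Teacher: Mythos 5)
Your proof is correct. Parts \eqref{eq:tausummarybig} and \eqref{eq:tausummarysmall} coincide with the paper's treatment (the latter is exactly the Proposition~\ref{prop:tausig} plus \eqref{ET} computation). The interesting divergence is in \eqref{eq:tausummarymiddle}: the paper handles this by a monotonicity/coupling observation---the event $\{\tau^l(x,y)>C\}$ is increasing in the graph, so the bound at $p_0$ defined by $n^{l-1}p_0^l=K\log n$ transfers down to all smaller $p$ via Theorem~\ref{thm:Spencer}---whereas you reuse Proposition~\ref{prop:tausig} plus Lemma~\ref{JUB}, after checking that $n^{l-1}p^l=O(\log n)$ forces $n^{2l-3}p^{2l-1}<n^{-\gO(1)}$ (your exponent calculation is right). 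Both work. Your route is more uniform, applying the same machinery in the middle and small regimes, and also makes explicit the useful observation that for $l\geq 2$ internally disjoint paths are automatically edge-disjoint, so $\gs^l(x,y)$ is dominated by the maximum number of occurring \emph{independent} path-events, which is precisely what Lemma~\ref{JUB} controls; the paper's monotonicity argument is arguably shorter since it skips the power-counting needed to verify Proposition~\ref{prop:tausig}'s hypothesis. One small wording nit: in the middle regime the hypothesis gives only an upper bound $p\leq n^{-(l-1)/l+o(1)}$, not equality, but since you only use the upper bound this is harmless.
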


\begin{proof}
The first two items are easy consequences of Theorem~\ref{thm:Spencer}:
\eqref{eq:tausummarybig} is immediate and
\eqref{eq:tausummarymiddle} is given by the observation that, for $K$ as in
the theorem (for some specified $\eps$) and $p_0$ defined by
$n^{l-1}p_0^l = K \log n$, the theorem implies that w.h.p.
\beq{cor:taulogn'}
\tau^l(x,y) \leq (1 + \eps) n^{l-1}(\max\{p,p_0\})^l ~~~~ \forall \, \{x,y\}\in \Cc{V}{2}
\enq
(since the probability of the event in \eqref{cor:taulogn'} decreases as $p$ increases
below $p_0$).

For
\eqref{eq:tausummarysmall}, suppose $n^{l-1}p^l <n^{-\ga}$,
with $\ga>0$ fixed.  Since this implies $n^{2l-3}p^{2l-1}<n^{-\gd}$
with $\gd=\gd_\ga>0$ fixed,
Proposition~\ref{prop:tausig} says it suffices to show that for given $x,y$
and suitable fixed $D$ (depending on $\ga$),
\[ %beq{eq:tau>Dunlikely}
\Pr(\gs^{l}(x,y)>D) = o(n^{-2}).
\]  %enq
But \eqref{ET} bounds this probability by
%\[n^{-\ga D}/D! <
$\exp[-D\log(n^\alpha D/e)]$,
which is $o(n^{-2})$ for large enough $D$.
\qedhere
\end{proof}

\medskip
We will also sometimes need {\em lower} bounds on path counts, as summarized in
the next result, which again follows easily from what we already know.

\begin{cor}
\label{cor:pip}
For any $l\geq 2$ there is a $K$ such that if $n^{l-1}p^l \geq K \log n$, then w.h.p.
$\sigma^l(x,y)=\Omega(\pi)$ for all $x,y$, with $\pi = \pi(n,p)$ equal to
\begin{align}
n^{l-1}p^l& \quad \text{ if } ~n^{l-2}p^{l-1} <n^{-\gO(1)}, \label{eq:pi1}\\
n^{l-1}p^l/\log n& \quad \text{ if } ~n^{-o(1)}< ~n^{l-2}p^{l-1} =O(\log n),\label{eq:pi2}\\
np& \quad \text{ if } ~n^{l-2}p^{l-1} =\go(\log n).\label{eq:pi3}
\end{align}
\end{cor}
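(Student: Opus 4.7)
My plan is to handle the three regimes of $n^{l-2}p^{l-1}$ separately, with Theorem~\ref{thm:Spencer} as a common starting point: under the global hypothesis $n^{l-1}p^l \geq K\log n$, it yields $\tau^l(x,y)\sim n^{l-1}p^l$ uniformly in $x,y$, so the task in each case is to extract many internally disjoint copies from the $\tau^l(x,y)$ paths.

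For case (1) the idea is to appeal to Proposition~\ref{prop:tausig}. A short algebraic check gives $n^{2l-3}p^{2l-1} = n^{1/(l-1)}(n^{l-2}p^{l-1})^{(2l-1)/(l-1)}$, so the present hypothesis implies the proposition's hypothesis (for any implicit exponent exceeding $1/(2l-1)$), and one gets $\tau^l - \sigma^l \leq C$ directly. If the implicit exponent happens to be smaller, I would argue instead that the expected number of non-internally-disjoint ordered pairs of $P_l$'s from $x$ to $y$ is $O(n^{2l-3}p^{2l-1})$, and comparing to $\tau^l$, the average degree of the conflict graph $\Gamma^l_{x,y}$ is $O(n^{l-2}p^{l-1}) = o(1)$; Caro--Wei then gives $\sigma^l = (1-o(1))\tau^l$. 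Either way, $\sigma^l \sim \tau^l = \pi$.

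For case (3) I would proceed by a Menger-style greedy construction. Since $n^{l-2}p^{l-1}=\omega(\log n)$, Theorem~\ref{thm:Spencer} applied to length $l-1$ gives $\tau^{l-1}(u,y) \sim n^{l-2}p^{l-1}$ uniformly. Paths are built one at a time, each starting at a fresh neighbor $u \in N(x)$ (of which there are $\sim np$) and reaching $y$ via a $P_{l-1}$ avoiding the $O(np)$ already-used internal vertices. Any single vertex carries only an $O(1/n)$-fraction of such $u$-$y$ paths on average, so after $\Omega(np)$ rounds the killed fraction is $O(p) = o(1)$ and the greedy succeeds (modulo some care to get this uniformly over the random used set $W$, for instance by transferring the count to $G[V\setminus W]$ via a bound on paths of length $l-1$ through any third vertex).

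Case (2) is the most delicate and will be the main obstacle. The required factor $1/\log n$ points to an independent-set argument on $\Gamma^l_{x,y}$: show the max degree is $O(\log n)$ and deduce $\sigma^l \geq \tau^l/O(\log n) = \Omega(\pi)$. The degree at $P$ is bounded by $\sum_{v \in V(P)} N_v$, where $N_v$ counts $P_l$'s from $x$ to $y$ through $v$; decomposing $N_v$ by the position $i$ of $v$ as $\sum_{i=1}^{l-1} \tau^i(x,v)\tau^{l-i}(v,y)$, the boundary terms $i\in\{1,l-1\}$ are $O(\log n)$ by Corollary~\ref{ubtau} (using $\tau^{l-1} = O(\log n)$ and $\tau^1 \leq 1$), and one hopes the interior terms are $O(1)$. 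The difficulty is the low end of case (2), where $np$ may be only polylogarithmic and mean estimates give $\tau^i = O(\log n)$ for interior $i$, producing a $\log^2 n$ loss; here a finer argument---perhaps applying Lemma~\ref{JUB} to the internally disjoint $P_l$-sections through $v$, or a direct second-moment computation on $N_v$ (whose mean is $\lesssim p\log n$)---should cap $N_v$ at $O(\log n)$ and close the gap.
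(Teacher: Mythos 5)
Your instinct for case (2) --- bound $\Delta(\Gamma^l_{x,y})$ by $\max_{i,v}\tau^i(x,v)\tau^{l-i}(v,y)$ and then use $\sigma^l=\alpha(\Gamma^l_{x,y})\geq |V(\Gamma^l_{x,y})|/\Delta(\Gamma^l_{x,y})$ --- is exactly the paper's argument, but the paper applies it \emph{uniformly} in all three cases rather than switching strategies. This resolves each of the difficulties you flag.

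For case (2) specifically, your worry about a $\log^2 n$ loss is unfounded: when $n^{l-2}p^{l-1}=O(\log n)$ and $2\leq i\leq l-2$, one has $n^{i-1}p^i\leq \left(n^{l-2}p^{l-1}\right)^{i/(l-1)}n^{-(l-1-i)/(l-1)} = O\!\left((\log n)^{i/(l-1)}\right)\cdot n^{-(l-1-i)/(l-1)}<n^{-\Omega(1)}$, so \eqref{eq:tausummarysmall} of Corollary~\ref{ubtau} already gives $\tau^i(u,v)=O(1)$ for all interior $i$; only $i\in\{1,l-1\}$ contributes $O(\log n)$, giving $\Delta=O(\log n)$ with no need for \eqref{JUB} or a second moment. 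For case (1) the same mechanism gives $\tau^i=O(1)$ for \emph{all} $i\in[l-1]$, hence $\Delta=O(1)$; this sidesteps both issues in your proposal --- the $\delta\leq 1/(2l-1)$ obstruction to invoking Proposition~\ref{prop:tausig}, and the Caro--Wei fallback, whose bound $\mathbb{E}|E(\Gamma^l_{x,y})|=O(n^{2l-3}p^{2l-1})$ is only an expectation and does not immediately give a w.h.p.\ bound holding simultaneously for all $\binom{n}{2}$ pairs (Markov loses a factor $n^{2+\Omega(1)}$, which ruins the estimate when the $\Omega(1)$ in the hypothesis is small). For case (3) the max-degree bound again suffices (using $\tau^i=O(\max\{n^{i-1}p^i,\log n\})$ and, where $n^{i-1}p^i$ is polynomially small, the $O(1)$ bound), whereas your greedy construction leaves the critical step --- that removing an $O(np)$-vertex set $W$ doesn't kill all $P_{l-1}$'s from some fresh $u\in N(x)$ to $y$, uniformly over the adaptively chosen $W$ --- unproved; making this rigorous would essentially reproduce a max-degree bound on the path hypergraph anyway. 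In short: promote your case-(2) argument to the whole corollary, and use \eqref{eq:tausummarysmall} to finish it.
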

\nin
(Of course in view of Proposition~\ref{prop:routine}, $(1+o(1))np$ is a trivial upper bound.)

\begin{proof}
Let $K$ be as in
Theorem~\ref{thm:Spencer}, for the given $l$ and, say, $\eps =1/2$
(since we don't worry about constants).
Since the theorem says that w.h.p. $|V(\gG^l_{x,y})| >\gO(n^{l-1}p^l)$
for all $x,y$, the present assertion(s) will follow if we show
\beq{gDsmall}
\mbox{w.h.p. $~\gD(\gG^l_{x,y}) =O(n^{l-1}p^l/\pi) ~\forall x,y$,}
\enq
where we use the
the trivial $\ga\geq |V|/\gD$
(recall $\gD$ and $\ga$ are maximum degree and independence number and note
$\gs^l(x,y) = \ga(\gG^l_{x,y})$).
Now the degree in $\gG^l_{x,y}$ of a given vertex $Q$
(that is, a $P_l$ joining $x$ and $y$) is at most
\beq{deltabd}
\mbox{$\sum_v\sum_i\tau^i(x,v)\tau^{l-i}(v,y)\leq
(l-1)^2\max\{\tau^i(x,v)\tau^{l-i}(v,y)\},$}
\enq
where the sums are over $v\in V(Q)$
and $i\in [l-1]$, and
the max is over $i\in [l-1]$ and $v\in V\sm \{x,y\}$
(the initial $(l-1)^2$ is of course irrelevant).
On the other hand,
Corollary~\ref{ubtau}
(with $i$ in place of $l$)
says that w.h.p. we have, for all $u,v$:

\begin{quote}
$\tau^i(u,v) < O(1)~$
if either $i\leq l-2$ and $p$ is as in \eqref{eq:pi1} or \eqref{eq:pi2},
or $i=l-1$ and
$p$ is as in \eqref{eq:pi1},
\end{quote}

\nin
and
$\tau^i(u,v) < O(\max\{n^{i-1}p^i, \log n\})$ in general;
and combining these bounds with \eqref{deltabd} easily yields \eqref{gDsmall}.
\end{proof}

Finally, in connection with the setup introduced at \eqref{eq:f}, we will need
the following simple observation:
\beq{xyF}
xy\in F ~~ \Longrightarrow ~~ |F|\geq \gs^{\kappa-1}(x,y)+1.
\enq
\begin{proof}
Since $F $ lies in $\cee^\perp_{\kappa}(G)$, it must contain a second edge of each $\kappa$-gon
of $G$ containing $xy$, and there is a set of $\gs^{\kappa-1}(x,y)$ such $\kappa$-gons that
share no edges except $xy$.
\end{proof}

\subsection{Stability}\label{Stability}

The following statement is an instance of a major result of
Conlon and Gowers \cite{Conlon-Gowers}. As mentioned in Section \ref{SEC:Overview},
this is the main (essentially only) ingredient in the proof of Lemma~\ref{LemCG}
given in Section \ref{SEC:PLCG}.

\begin{thm}
\label{thm:ConlonGowers}
For each odd $\kappa \geq 3$ and $\eps>0$ there is a $C$ such that if
$p > Cn^{-(\kappa-2)/(\kappa-1)}$, then w.h.p. every $C_\kappa$-free subgraph
of $G=G_{n,p}$ of size at least
$|G|/2$ can be made bipartite by deleting at most $\eps n^2p$ edges.
\end{thm}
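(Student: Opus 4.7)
The plan is to derive this from the sparse regularity method plus a counting lemma for odd cycles in $\eps$-regular pairs of a random host, combined with the classical Erd\H{o}s--Simonovits stability theorem for odd cycles. All three ingredients are now standard, with the counting lemma (a form of the \textls{KLR} conjecture) being the content of the Conlon--Gowers transference theorem.

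First I would apply the sparse regularity lemma to any $C_\kappa$-free $H\sub G=G_{n,p}$ with $|H|\geq |G|/2$: partition $V=V_1\cup\cdots\cup V_t$ into near-equal parts so that all but an $\eps'$-fraction of the pairs $(V_i,V_j)$ are $(\eps',p)$-regular in $H$. Form a reduced graph $R$ on $[t]$ by joining $i\sim j$ when $(V_i,V_j)$ is $(\eps',p)$-regular with normalized density at least $\delta'$ (i.e.\ $H$-density at least $\delta' p$). A routine edge-count, using Proposition~\ref{density}(a) to control the total number of $G$-edges absorbed by irregular pairs, low-density pairs, and pairs inside a single part, yields
\[
e(R) \geq \bigl(\tfrac14 - o_{\eps',\delta'}(1)\bigr) t^2.
\]

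Next I would use the Conlon--Gowers counting lemma (for $C_\kappa$ inside regular pairs of $G_{n,p}$ when $p\gg n^{-(\kappa-2)/(\kappa-1)}$) to conclude that $R$ itself is $C_\kappa$-free: any $\kappa$-cycle in $R$ would, via the counting lemma, guarantee at least one---indeed many---copies of $C_\kappa$ in $H$, contradicting the hypothesis on $H$. Thus $R$ is a $C_\kappa$-free graph on $t$ vertices with $(\tfrac14-o(1))t^2$ edges, so the classical Erd\H{o}s--Simonovits stability theorem for odd cycles gives a bipartition $[t]=A\sqcup B$ with at most $\eps'' t^2$ edges of $R$ inside $A$ or inside $B$.

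Finally I would lift to a bipartition $V=U\sqcup W$ by $U=\bigcup_{i\in A}V_i$, $W=\bigcup_{i\in B}V_i$, and bound the $H$-edges inside $U$ or $W$ by four contributions: edges in a single part ($\lesssim n^2p/t$), edges in irregular pairs ($\lesssim \eps' n^2p$), edges in low-density pairs ($\lesssim \delta' n^2p$), and edges in regular pairs whose reduced-graph edges violate the bipartition ($\lesssim \eps'' n^2p$), where each pair contributes $O((n/t)^2 p)$ by Proposition~\ref{density}(a). Choosing $t$ large and $\eps',\delta',\eps''$ small as a function of $\eps$ makes the total at most $\eps n^2p$. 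The main obstacle is clearly the counting lemma in the sparse regime---this is exactly the deep content of Conlon--Gowers (and, in different forms, of Schacht and of Balogh--Morris--Samotij / Saxton--Thomason), and here it must be invoked as a black box; everything else is a standard regularity-to-stability deduction.
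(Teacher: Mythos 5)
The paper does not prove Theorem~\ref{thm:ConlonGowers}; it is quoted verbatim as ``an instance of a major result of Conlon and Gowers'' and used as a black box, so there is no internal proof to compare against. Your sketch---apply a sparse regularity lemma to $H$, use a counting/embedding lemma for $C_\kappa$ in $(\eps',p)$-regular pairs of $G_{n,p}$ to show the reduced graph $R$ is $C_\kappa$-free, invoke dense Erd\H{o}s--Simonovits stability on $R$, and lift the bipartition---is a legitimate derivation, and the edge-bookkeeping you describe (edges inside parts, in irregular pairs, in low-density pairs, in regular pairs violating the bipartition) is the standard and correct way to close it. Two caveats are worth flagging. First, this is not in fact the route Conlon and Gowers take: their proof is via a transference (dense-model) principle and predates the full resolution of the KLR conjecture, so it does not pass through sparse regularity at all; your packaging is closer to later treatments such as Conlon--Gowers--Samotij--Schacht, which explicitly derive random stability from the sparse counting lemma. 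Second, the hard step in your outline---the sparse counting lemma for $C_\kappa$ at density $p\gg n^{-(\kappa-2)/(\kappa-1)}$---is of essentially the same depth as the statement you are proving, and you invoke it as a black box; so you have reorganized, rather than removed, the external dependence that the paper is candid about. Given that, it might have been cleaner simply to note that the theorem is cited, not proved, in the source.
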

\nin
This (or the more general result of \cite{Conlon-Gowers})
is a ``sparse random'' analogue of the Erd\H{o}s-Simonovits
``Stability Theorem'' \cite{Erdos,Simonovits} that was
conjectured by Kohayakawa {\em et al.} in the seminal
\cite{KLR}.

\medskip
As mentioned in Section~\ref{SEC2}, Lemma~\ref{LemCG}
can be considerably extended; in fact we can prove something similar with
$C_\kappa$ replaced by a general $H$, though not always with the
%(conjecturally correct)
lower bound on $p$ that would correspond to a positive answer to Question~\ref{Gen'l?}.
See
Section~\ref{SEC:PLCG} for a precise statement.
%and Section~\ref{CGA} for a (sketchy) proof.}

\subsection{Coupling}
\label{subsec:Coupling}

A central role in the proofs of
Lemmas~\ref{lem:p=O(p)'} and \ref{lem:couplingdown}
is played by the usual coupling of $G:=G_{n,p}$ and $G_0:=G_{n,q}$,
where $p$ will always be the value we're really interested in and $q<p$ will depend
on what we're trying to do.
A standard description:

Let $\gl_e$, $e\in E(K_n)$, be chosen uniformly and independently from $[0,1]$ and set
\[
G = \{e:\gl_e<p\}, ~~~ G_0 = \{e:\gl_e<q\}.
\]
In particular $G_0\sub G$.
Probabilities in the proofs of Lemmas~\ref{lem:p=O(p)'} and \ref{lem:couplingdown} refer
to the joint distribution of $G$ and $G_0$.

We will get most of our leverage from two alternate ways of viewing the choice of
the pair $(G,G_0)$:

\begin{itemize}
\item[(A)]  Choose $G$ first; thus we choose $G$ ($=G_{n,p}$) in the usual way and let $G_0$
be the (``($q/p$)-random") subset of $G$ gotten by retaining edges of $G$ with probability
$q/p$, these choices made independently (a.k.a.\ {\em percolation on} $G$).

\item[(B)] Choose $G_0$ first; that is, we choose $G_0$ ($=G_{n,q}$) in the usual way,
define $p'$ by $(1-q)(1-p')=1-p$, and let $G$ be the random superset of $G_0$ gotten by
adding each edge of $\overline{G}_0$ to $G_0$ with probability $p'$, these choices again
made independently.
\end{itemize}

\nin
We will often refer to these as ``coupling down" and ``coupling up" (respectively).

\medskip
The proof of
Lemma~\ref{lem:couplingdown} is based naturally (or inevitably) on the viewpoint in (A);
namely, we show that (with $p,q$ as in the lemma) if $G=G_{n,p}$ is ``bad"
(meaning $G\not\in \T$) then
the coupled $G_0=G_{n,q}$ is likely to be bad as well.
For the proof of
Lemma~\ref{lem:p=O(p)'}, viewpoint (B) is the primary mover, though the role of
(A) is also crucial.

With reference to the setup introduced at \eqref{eq:f},
when working with $G=G_{n,p}$ and $G_0=G_{n,q}$ as above, we set
$F_0=G_0 \cap F$
(a $(q/p)$-random subset of $F$; note this has nothing to do with $F(G_0)$,
which will play no role here).
Then automatically
\beq{F0eperp}
F_0 \in \cee^\perp_{\kappa}(G_0),
\enq
since $F_0\cap C=F\cap C$ for any $\kappa$-gon $C$ of $G_0$.

We will want to say that certain features of $(G,F)$ are reflected in
$(G_0,F_0)$.
A simple but crucial point here is that there is no summing (of probabilities) over possible
$F$'s, since there is just one $F$ for each $G$.
The following proposition will be sufficient for our purposes.

\begin{prop}
\label{prop:Frightsize}
With the above setup,
for any $p$, $q$ and $g=g(n)=\go(1)$, w.h.p.
\[
|F_0|\sim |F|q/p ~~\mbox{if} ~~ |F| > gp/q
\]
and
\[
d_{F_0}(v) \left\{\begin{array}{ll}
\sim d_F(v)q/p&\mbox{$\forall v$ with $d_F(v) > (g \log n )p/q$,}\\
<3g\log n &\mbox{$\forall v$ with $d_F(v) \leq (g \log n )p/q$.}
\end{array}\right.
\]
\end{prop}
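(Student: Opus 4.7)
The plan is to condition on $G$ and exploit viewpoint (A) of Section~\ref{subsec:Coupling}. Since $F=F(G)$ is determined by $G$ (the crucial point emphasized just before the statement), conditional on $G$ the subgraph $F_0=G_0\cap F$ is obtained by retaining each edge of $F$ independently with probability $q/p$. Thus conditionally, $|F_0|\sim\mathrm{Bin}(|F|,q/p)$ and $d_{F_0}(v)\sim\mathrm{Bin}(d_F(v),q/p)$ for each $v$, so it suffices to bound the conditional failure probabilities by $o(1)$ uniformly in $G$ and then integrate; no union bound over choices of $F$ is required.

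For the first assertion, suppose $|F|>gp/q$, so $\mu:=\mathbb{E}[|F_0|\mid G]=|F|q/p>g=\omega(1)$. Pick $\eps_n=g^{-1/4}\to 0$, so $\eps_n^2\mu>g^{1/2}\to\infty$; Theorem~\ref{thm:Chernoff} then gives
\[
\Pr(|F_0|\neq(1\pm\eps_n)\mu\mid G)\leq 2\exp[-\gO(g^{1/2})]=o(1),
\]
and taking expectations yields the unconditional bound.

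For the second assertion we treat each vertex separately and union bound. If $d_F(v)>(g\log n)p/q$, then $\mu_v:=d_F(v)q/p>g\log n$, so with the same $\eps_n=g^{-1/4}$, Theorem~\ref{thm:Chernoff} gives
\[
\Pr(d_{F_0}(v)\neq(1\pm\eps_n)\mu_v\mid G)\leq 2\exp[-\gO(g^{1/2}\log n)]=o(n^{-2}).
\]
If instead $d_F(v)\leq(g\log n)p/q$, so $\mu_v\leq g\log n$, then by stochastic monotonicity in $d_F(v)$ we may assume $\mu_v=g\log n$, and Theorem~\ref{thm:Chernoff'} with $K=3$ gives
\[
\Pr(d_{F_0}(v)>3g\log n\mid G)\leq\exp[-3g\log n\cdot\log(3/e)],
\]
which is $o(n^{-C})$ for any fixed $C$ since $g\to\infty$. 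In either case summing the conditional bound over $v\in V$ keeps the total bad probability $o(1)$, and integrating out $G$ completes the proof.

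There is no real obstacle: the proof is a direct application of the standard Chernoff bounds already stated as Theorems~\ref{thm:Chernoff} and~\ref{thm:Chernoff'}. The only conceptual point, and the reason the proposition is useful at all, is that conditioning on $G$ fixes $F$, so each Binomial concentration estimate is invoked once per $G$ rather than once per possible $F$.
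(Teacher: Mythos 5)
Your proof is correct and follows essentially the same route as the paper's: condition on $G$ (which fixes $F$), observe that $|F_0|$ and the $d_{F_0}(v)$ are then binomial, and apply Theorems~\ref{thm:Chernoff} and~\ref{thm:Chernoff'} with a union bound over $v$ for the degree statements. The only cosmetic difference is that you make $\eps_n\to 0$ explicit, whereas the paper fixes an arbitrary $\eps>0$ and lets $g=\go(1)$ do the work; both give the same conclusion.
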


\nin
(This is true for any rule that specifies a particular subgraph (in place of $F$)
for each graph;
but we will only use it with $F$ ($=F(G)$), so just give the statement for this case.)

\begin{proof}
These are straightforward applications of Theorems~\ref{thm:Chernoff} and
\ref{thm:Chernoff'}, so we will be brief.
For the first assertion we want to say that for any fixed $\eps>0$,
\[
\Pr \big((|F|> gp/q)\wedge (|F_0| \neq (1\pm \eps)|F|q/p)\big) \ra 0.
\]
But the probability here is less than
\[
\Pr \big(|F_0| \neq (1\pm \eps)|F|q/p \; \mid \; |F|> gp/q \big),
\]
which by Theorem~\ref{thm:Chernoff} is less than
$\exp[-\gO(\eps^2g)]$.

The second assertion (pair of assertions) is similar, following from
\begin{align*}
\sum_v\Pr\big(d_{F_0}(v)\neq (1\pm \eps)d_F(v) \;\big|\; d_F(v) > (g\log n) p/q \big)
&< n \exp[-\gO(\eps^2g\log n)] \\
&= o(1)
\end{align*}
for any fixed $\eps>0$, and (now switching to Theorem~\ref{thm:Chernoff'})
\begin{align*}
\sum_v\Pr \big(d_{F_0}(v)>3g\log n \;\big|\; d_F(v) \leq (g \log n )p/q \big)
&< n \exp[-(3g\log n)\log (3/e) ]\\
&= o(1). \qedhere
\end{align*}
\end{proof}

\section{Two simple points}
%Proof of Lemma \ref{lem:Qsharpthresh}}
\label{SEC:LemmaThresh}

Here we dispose of Lemma~\ref{lem:Qsharpthresh} and the derivation of
Theorem~\ref{thm:C5threshold} from Theorem~\ref{thm:C5precise}.
(Recall we are using $G$ for $G_{n,p}$ and $V$ for $V(G)$.)

\begin{proof}[Proof of Lemma~\ref{lem:Qsharpthresh}.]
We begin with the 1-statement,
a typical application of Theorem~\ref{TJanson}.
We assume $p>(1+\eps)p^*$ and $p=O(p^*)$ (as we may, since
for larger $p$, the 1-statement is contained in Theorem~\ref{thm:Spencer}).
Given $x,y\in V$, let the $A_i$'s (in the paragraph preceding Theorem~\ref{TJanson})
be the edge sets of the $(\kappa-1)$-paths joining $x$ and $y$ in $K_n$;
so $X=\tau^{\kappa-1}(x,y)$,
$\mu \sim n^{\kappa-2}p^{\kappa-1}$ and
$\overline{\Delta} = \mu + O(\mu n^{\kappa-3}p^{\kappa-2}) \sim \mu$.
Thus (note $\varphi(-1)=1$) Theorem~\ref{TJanson} gives
\begin{align}
\label{eq:LemmaThreshPaths}
\Pr(\tau^{\kappa-1}(x,y)=0) %= \Pr(\tau^{\kappa-1}(x,y) \leq \mu-\mu)
\leq \exp[-(1-o(1))\mu].
\end{align}
So the probability that $\Q$ ($=\Q_\kappa$) fails---that is, that there is some
$xy$ in $G$ with $\tau^{\kappa-1}(x,y)=0$---is less than
\[
\Cc{n}{2}pe^{-(1-o(1))\mu} <\exp[\log(n^2p) - (1-o(1))\mu] =o(1)
\]
(since
$\mu > (1-o(1))(1+\eps)^{\kappa-1}(\kappa/(\kappa-1))\log n \sim (1+\eps)^{\kappa-1}\log(n^2p)$).

\medskip
For the 0-statement we use the second moment method (see e.g. Chapter 4 of \cite{AS})
and, again, Theorem~\ref{TJanson}.
Let $Z_{xy}$ be the indicator of the event $\{xy \in G\} \wedge \{\tau^{\kappa-1}(x,y)=0\}$
($x,y\in V$)
and $Z=\sum Z_{xy}$. Theorem \ref{thm:Harris} gives
$\Pr(\tau^{\kappa-1}(x,y)=0) > (1-p^{\kappa-1})^{n^{\kappa-2}} > \exp[-\mu-o(1)]$
($\mu$ as above),
whence
\begin{align}
\label{eq:LemmaThresh0}
\mathbb{E}[Z_{xy}] > p\exp[-\mu-o(1)].
\end{align}
In particular $\mathbb{E}[Z] = \omega(1)$ (using $p < (1-\eps)p^*$ and ignoring the rather trivial
case $p=O(n^{-2})$),
so for
$\mathbb{E}Z^2\sim \mathbb{E}[Z]^2$ (which gives the 0-statement {\em via} Chebyshev's Inequality),
it's enough to show
\[
\mathbb{E}[Z_{xy}Z_{uv}] < (1+o(1))\mathbb{E}[Z_{xy}]^2
\]
for distinct $\{x,y\}$,$\{u,v\}\in \binom{V}{2}$,
which in view of \eqref{eq:LemmaThresh0} follows from
\begin{align*}
\mathbb{E}[Z_{xy}Z_{uv}] &\leq p^2\Pr(\tau^{\kappa-1}(x,y)=\tau^{\kappa-1}(u,v)=0) \\
&\leq p^2\exp[-(1-O(n^{\kappa-3}p^{\kappa-2}))2\mu]
=p^2\exp[-2\mu+o(1)].
\end{align*}
Here the first inequality is given by Theorem \ref{thm:Harris}
(since the events $\{xy, uv \in G\}$ and $\{\tau^{\kappa-1}(x,y)=\tau^{\kappa-1}(u,v)=0\}$
are increasing and decreasing respectively),
and the second by Theorem~\ref{TJanson}, where the $A_i$'s are the $(\kappa-1)$-edge paths
joining either $x$ and $y$ or $u$ and $v$, for which $\E X\sim 2\mu$ (recall $X$ is the number
of $A_i$'s that occur) and
it's easy to see that
$\ov{\gD} -\mu=O(n^{2\kappa -5}p^{2\kappa-3})= O(n^{\kappa-3}p^{\kappa-2})\mu$ ($=o(\mu)$).
\end{proof}

\mn
\begin{proof}[Proof that Theorem~\ref{thm:C5precise} implies Theorem~\ref{thm:C5threshold}.]
This is again routine and we aim to be brief.
Lemma~\ref{lem:Qsharpthresh} gives the 1-statement
(which is the interesting part).  For the 0-statement,
it is enough to say that for $p$ in the stated range, $G=G_{n,p}$ w.h.p. contains an edge lying
in a cycle but not in a $C_\kappa$.
This is again given by Lemma~\ref{lem:Qsharpthresh} if $p$ is large enough that \emph{all} edges
are in cycles (w.h.p), which is true if $p> (1+\gO(1))\log n/n$ (again, see \cite[p.\ 105]{JLR}).
For smaller $p$, w.h.p. $G$ contains cycles of length $\go(1)$ if $p> (1-o(1))/n$
and of length (say) $\gO(n^{.3})$ if $p> (1+\gO(1))/n$
(see e.g. \cite[Thm. 5.18(i)]{JLR}).  On the other hand, since the expected number of
$C_\kappa$'s in $G$ is less than $(np)^\kappa$, the number of edges in $C_\kappa$'s is
w.h.p. less than $\go \cdot(np)^\kappa$ for any $\go=\go(1)$;
so in the range under discussion, the $C_\kappa$'s w.h.p.  don't cover
the edges of even one longest
cycle in $G$.
\end{proof}

\section{Proof of Lemma~\ref{LemCG}}
\label{SEC:PLCG}

Here we give the easy proof of Lemma~\ref{LemCG} and then state
the extension to general $H$ mentioned in the remark
following the lemma.

For the lemma it's enough to show that the conclusions of
Proposition~\ref{prop:routine}, Theorem~\ref{thm:ConlonGowers} and
Proposition~\ref{density}(c), the latter two with $\eps =c/3$,
imply $|F|< cnp^2$
(deterministically).

Let $F'$ be a largest element of $F + \mathcal{C}^\perp(G)$.
Then $|F'| \geq |G|/2$
(by Proposition~\ref{prop:addcut}), so, since $F'$ is $C_\kappa$-free,
the conclusion of Theorem~\ref{thm:ConlonGowers} gives
an $A \subseteq V$ with
\beq{F'setminus}
|F' \setminus \nabla_G(A)| < \eps n^2p.
\enq
To finish we just check that (under our assumptions),
\eqref{F'setminus} implies
\[
\mbox{($|F|\leq $) $~~|F'\triangle \nabla_G(A)| < 3\eps n^2p~$:}
\]
the conclusion of
Proposition~\ref{density}(c)
gives
$|\nabla_G(A)| < (1+\eps)n^2p/4$, whence
\[
|\nabla_G(A) \setminus F'| \leq (1+\eps)n^2p/4 - (|G|/2 - \eps n^2p) < 2\eps n^2p
\]
(where we again used
Proposition~\ref{prop:routine} to say $|G|\sim n^2p/2$).\qed

\bn
{\em Generalization.}
(We continue to use $G$ for $G_{n,p}$.)
For this discussion we restrict to $H$ with $e_H\geq 2$ (so $v_H\geq 3$).
For such an $H$, set
\beq{m2}
m_2(H) = \max\left\{\frac{e_K-1}{v_K-2}: K\sub H,v_K\geq 3\right\}.
\enq
This parameter plays a central role in various contexts,
in particular in results more or less related to (the general version of) Theorem~\ref{thm:ConlonGowers};
see e.g. \cite{Rodl-Schacht} for an overview.

\begin{thm}\label{TCGA}
For any fixed $H$ the following is true.
For any $\eps>0$ there is a C such that if $p>C n^{-1/m_2(H)}$ then w.h.p.:
for each $F\in \cee_H^\perp(G)$ there is an $X\in \W_H^\perp(G)$ with
$|F\Delta X|< \eps n^2p$;
in particular, if $\cee_H(G)\neq \W_H(G)$, then
\[
\min\{|F|:F\in \cee^\perp_H(G)\sm \W^\perp_H(G)\}< \eps n^2p.
\]
\end{thm}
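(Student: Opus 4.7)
The plan is to mimic the proof of Lemma~\ref{LemCG}, with Theorem~\ref{thm:ConlonGowers} replaced by the general-$H$ sparse random stability theorem of Conlon and Gowers \cite{Conlon-Gowers}: for $p>Cn^{-1/m_2(H)}$, w.h.p.\ every $H$-free subgraph of $G=G_{n,p}$ whose size is close to the Tur\'an density $(1-1/(\chi(H)-1))|G|$ lies within $\eps n^2p$ of some $(\chi(H)-1)$-partite subgraph of $K_n$.

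Fix $F\in\cee_H^\perp(G)$; the goal is to exhibit $X\in\W_H^\perp(G)$ with $|F\Delta X|<\eps n^2p$, equivalently a representative of the coset $F+\W_H^\perp(G)$ of size $<\eps n^2p$. Following the $C_\kappa$ template, I would first enlarge $F$ within its coset. When $H$ is Eulerian, $\cee^\perp(G)\sub\W_H^\perp(G)$, so Proposition~\ref{prop:addcut} yields a representative $F'$ with $|F'|\geq (1-o(1))|G|/2$; in the $|H|$-even cases of Theorem~\ref{thm:Hspace} we also have $E(G)\in\W_H^\perp(G)$ and may replace $F'$ by $G\sm F'$ if that helps. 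This $F'$ stays in $\cee_H^\perp(G)$ since $\cee^\perp(G)\sub\cee_H^\perp(G)$ when $H$ is Eulerian and $E(G)\in\cee_H^\perp(G)$ when $|H|$ is even.

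The key structural input is that $F'\in\cee_H^\perp(G)$ forces $F'$ (or its complement) to be $H$-free: when $|H|$ is odd, containing a copy of $H$ would give $|F'\cap H'|=|H|$ odd; when $|H|$ is even, a parity argument comparing $F'$ with $G\sm F'$ again yields a large $H$-free subgraph. Invoking Conlon--Gowers stability then produces a $(\chi(H)-1)$-partite $K\sub K_n$ with $|F'\Delta K|<\eps n^2p$, and, exactly as in the $C_\kappa$ proof, Proposition~\ref{density}(c) converts this into $|F'\Delta K_G|<O(\eps n^2p)$ for a suitable $K_G\in\W_H^\perp(G)$ (when $\chi(H)=3$, $K_G=\nabla_G(A)$ for $A$ the bipartition of $K$). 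Translating back gives the desired coset representative of size $<O(\eps n^2p)$.

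The main obstacle---and the reason $n^{-1/m_2(H)}$ does not always give a positive answer to Question~\ref{Gen'l?}---is the case $\chi(H)\geq 4$: the enlarging step only brings $|F'|$ up to $\sim|G|/2$, well below the Tur\'an density $(1-1/(\chi(H)-1))|G|$ needed for stability to apply. Getting around this requires a subtler choice of representative (e.g., pre-fixing a balanced $(\chi(H)-1)$-partition of $V$ and maximizing crossing edges of $F'$), and the corresponding identification of $(\chi(H)-1)$-partite ``cuts'' with elements of $\W_H^\perp(G)$ becomes significantly more delicate---most of all in the non-Eulerian cases, where $\W_H^\perp(G)\sub\{0,E(G)\}$ leaves essentially no room for the enlarging step at all, forcing one to argue directly that $F$ itself is small.
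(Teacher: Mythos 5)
Your approach does not match the paper's, and the gaps you flag at the end are fatal rather than incidental. The paper explicitly notes (in the remarks after Lemma~\ref{ESish}) that Theorem~\ref{TCGA} and Theorem~\ref{thm:ConlonGowers} ``live in somewhat different worlds,'' precisely because the Conlon--Gowers input requires $H$-\emph{freeness}, which is much stronger than the actual hypothesis $F\in\cee_H^\perp(G)$ (even intersection with every copy of $H$). The proof the paper points to (in \cite[Sec.~4.8]{Baron}) instead runs the hypergraph container machinery of \cite{BMS,ST} directly on the set system of $H$-copies with odd $F$-intersection, with the deterministic Lemma~\ref{ESish} (a stability statement for $F\sub E(K_n)$ having few odd-intersecting copies of $H$, relative to $\W_H^\perp(K_n)$) playing the role that Erd\H{o}s--Simonovits plays in \cite{BMS,ST}'s proof of Theorem~\ref{thm:ConlonGowers}. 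This route bypasses the chromatic-number issue entirely, because $\W_H^\perp(K_n)$, not the family of $(\chi(H)-1)$-partite graphs, is the target of the stability statement.

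Concretely, your reduction breaks in three separate places. First, the Proposition~\ref{prop:addcut} enlargement only yields $|F'|\geq(1-o(1))|G|/2$, so for $\chi(H)\geq 4$ you are nowhere near the Tur\'an density $(1-1/(\chi(H)-1))|G|$ required to invoke stability; you acknowledge this but offer no fix. Second, when $|H|$ is even, $F'\in\cee_H^\perp(G)$ does \emph{not} force $F'$ (or its complement) to be $H$-free: $F'$ can contain a full copy of $H$, since $|F'\cap H'|=|H|$ is then even, and the same goes for $G\sm F'$, so the ``parity argument'' you invoke does not exist. Third, for non-Eulerian $H$ you correctly observe $\W_H^\perp(G)\sub\{\0,E(G)\}$, which kills the enlargement step outright, and you have nothing to replace it. Finally, a smaller point: you attribute the remark that $n^{-1/m_2(H)}$ may be the wrong range for Question~\ref{Gen'l?} to the $\chi(H)\geq 4$ obstruction, but the paper's stated reason is different---for $H$ equal to two triangles joined by a path, $m_2(H)=2$ while the relevant threshold for $\Q_H$ is $p\asymp n^{-2/3}\log^{1/3}n$, governed by when vertices lie in triangles; this has nothing to do with chromatic number.
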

\nin
Since we aren't using this (and since the present work is already too long),
we refer to \cite[Sec.\ 4.8]{Baron} for the proof, 
here just mentioning that the main ingredients are
the ``container" machinery of \cite{BMS,ST}
and the following analogue of
the Erd\H{o}s-Simonovits
``Stability Theorem" \cite{Erdos,Simonovits}. 
% mentioned following Theorem~\ref{thm:ConlonGowers}.
(The role of this lemma in the proof of Theorem~\ref{TCGA}
is similar to that of Erd\H{o}s-Simonovits
in the proofs of Theorem~\ref{thm:ConlonGowers} in
\cite{BMS,ST}.)

For any $H$ and $F\sub E(K_n)$, let $\tau_H(F)$
be the number of copies of $H$ in $K_n$ (say unlabelled)
having odd intersection with $F$.

\begin{lemma}\label{ESish}
For any fixed graph H and $\eps>0$, there is a $\gd>0$ such that if
$F\sub E(K_n)$
satisfies $\tau_H(F)<\gd n^{v_H}$, then there is an $X\in \W_H^\perp(K_n)$
with $|F\Delta X|< \eps n^2$.
\end{lemma}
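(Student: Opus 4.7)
The plan is to prove Lemma~\ref{ESish} along the lines of the Erd\H{o}s-Simonovits stability theorem, via Szemer\'edi regularity, a parity-sensitive counting identity, and Theorem~\ref{thm:Hspace} itself applied on the (fixed-size) reduced graph $K_k$. First, apply the regularity lemma to $F$ (as a spanning subgraph of $K_n$) to get an equitable partition $V=V_1\cup\cdots\cup V_k$ almost all of whose pairs $(V_i,V_j)$ are $\eta$-regular, set $d_{ij}:=d_F(V_i,V_j)$, and write $x_{ij}:=1-2d_{ij}\in[-1,1]$. The standard counting lemma, applied for each cluster pattern $\phi:V(H)\to[k]$ separately, shows that the number of copies of $H$ in $K_n$ having odd intersection with $F$ equals, up to $o(n^{v_H})$, a weighted sum over $\phi$ of the quantities $\tfrac12(1-\prod_{uv\in E(H)}x_{\phi(u)\phi(v)})$, with weights the number of copies of $H$ in $K_n$ whose cluster pattern is $\phi$ (essentially $(n/k)^{v_H}$ times a combinatorial factor).

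The structural core is a two-step cleanup of the $x_{ij}$'s. \emph{First}, the hypothesis $\tau_H(F)<\delta n^{v_H}$ forces $|x_{ij}|\geq 1-\eta'$ for all but an $\eta'$-fraction of pairs $ij$, where $\eta'=\eta'(\delta,H)\to 0$ as $\delta\to 0$: any $ij$ with $|x_{ij}|\leq 1-\eta'$ contributes a shortfall of order $\eta'$ in $1-\prod_{uv}x_{\phi(u)\phi(v)}$ over $\gO(k^{v_H-2})$ patterns $\phi$ extending $ij$ to an $H$-copy through other already-good pairs, and these cannot cancel. \emph{Second}, round by defining $T\sub E(K_k)$ by $T_{ij}=1$ iff $d_{ij}>1/2$, so that $x_{ij}\approx(-1)^{T_{ij}}$ wherever $|x_{ij}|$ is near $1$. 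The bound on $\tau_H(F)$ then forces $\sum_{uv\in E(H)}T_{\phi(u)\phi(v)}\equiv 0\pmod 2$ for all but an $o(1)$-fraction of patterns $\phi$; but since $k$ is a fixed constant (depending only on $\eps$ and $H$), any single bad $\phi$ already represents a $1/k^{v_H}$-fraction, so taking $\delta$ small enough forces every $\phi$ to be good, i.e., $T\in\cee_H^\perp(K_k)$. Theorem~\ref{thm:Hspace} (applied on $K_k$, with $k$ chosen larger than the threshold in that theorem) then gives $T\in\W_H^\perp(K_k)$, and the blowup of $T$ to $V$ yields the required $X\in\W_H^\perp(K_n)$ with $|F\triangle X|<\eps n^2$.

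The main obstacle is the first cleanup step: converting the aggregate bound ``$\sum_\phi(1-\prod x)$ small'' into the pointwise ``$|x_{ij}|$ near $1$ for almost every $ij$''. The difficulty is that cancellation of signs can make $\prod_{uv}x_{\phi(u)\phi(v)}$ close to $\pm 1$ even when several of the $x_{ij}$'s along the copy are bounded away from $\pm 1$. The remedy is local: for a putative bad pair $ij$, fix an extension of $ij$ to a near-complete $H$-subgraph across other pairs $i'j'$ with $|x_{i'j'}|$ already known to be near $1$ (of which there are plenty, by regularity), and apply the counting lemma to the remaining free vertex; this isolates a $(1-x_{ij}^2)$ factor and forces $|x_{ij}|$ itself to be close to $1$. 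The remaining bookkeeping---the parameter hierarchy $\eta\ll\eta'\ll\delta\ll\eps$, the size of $k$, and the negligible intra-part, irregular-pair, and sub-threshold contributions---is standard.
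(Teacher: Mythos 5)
The paper itself does not prove Lemma~\ref{ESish}: it explicitly defers the argument to \cite[Sec.~4.8]{Baron}, so there is no proof here to compare against. Judged on its own terms, your regularity-lemma approach is a natural and plausible route to an Erd\H{o}s--Simonovits-type stability statement, and the skeleton (counting lemma, the identity $\tfrac12(1-\prod(1-2d_{ij}))$ for parity, density cleanup, rounding to a $T\subseteq E(K_k)$, Theorem~\ref{thm:Hspace} on the reduced graph, blowup) is the right shape.

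However, there is a genuine gap in the step that concludes $T\in\cee_H^\perp(K_k)$. The per-pattern counting argument, and hence the bound on the number of ``bad'' patterns $\phi$, only applies to patterns $\phi:V(H)\to[k]$ that are injective and land entirely on $\eta$-regular pairs. For such $\phi$ you can indeed say that if $\delta$ is small (depending on $k$) then $|T\cap\phi(H)|$ must be even. But Szemer\'edi's lemma leaves up to $\eta k^2$ irregular pairs, and for any $\phi$ using one of them the counting lemma gives no control; the number of such $\phi$ is $\Theta(\eta k^{v_H})$, which is $\gg 1$ (indeed $\gg$ anything you can make small by shrinking $\delta$, since $k$ has already been fixed as a function of $\eta$). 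So ``taking $\delta$ small enough forces every $\phi$ to be good'' is false: only the regular $\phi$'s are forced good, and $T$ need not lie in $\cee_H^\perp(K_k)$, so Theorem~\ref{thm:Hspace} cannot be invoked on $K_k$. What you actually have is that all $H$-copies of $K_k$ supported on regular pairs have even $T$-intersection, and you still need a separate argument that a 2-coloring of the regular pairs of $K_k$ with this property agrees, on the regular pairs, with the restriction of some element of $\W_H^\perp(K_k)$. This is not what Theorem~\ref{thm:Hspace} says (it is a statement about all of $K_n$), and while it should be provable for a dense-enough ``regular-pairs graph'' $R\subseteq K_k$ (e.g.\ by a cut-propagation argument once one knows $\cee_H^\perp(R)$ is no bigger than it should be, which is essentially a robust version of Theorem~\ref{thm:Hspace}), that argument is missing and is not ``standard bookkeeping.''

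Two smaller points. First, the ``main obstacle'' you describe---cancellation making $\prod x_{\phi(u)\phi(v)}$ close to $\pm1$ despite several $x$'s away from $\pm1$---cannot occur: all $|x_{ij}|\le 1$, so $|\prod x|\le\min|x_{ij}|$, hence $1-\prod x\ge\eta'$ already for every $\phi$ touching a single pair with $|x_{ij}|\le 1-\eta'$. The localization you propose is thus unnecessary (though harmless); the first cleanup step is more elementary than you suggest. Second, the parameter hierarchy ``$\eta\ll\eta'\ll\delta\ll\eps$'' has $\delta$ and $\eta'$ in the wrong order: the contradiction from a bad pair $ij$ produces $\tau_H(F)\gtrsim\eta' n^{v_H}/k^2$, so one needs $\delta\ll\eta'/k^2$, i.e.\ $\delta$ is chosen \emph{last}, much smaller than $\eta'$ (and indeed much smaller than $1/M(\eta)^{v_H}$, where $M(\eta)$ is the Szemer\'edi tower bound), not the other way around.
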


\mn
{\em Remarks.}
Notice that Theorem~\ref{TCGA}
{\em contains} an extension of Lemma~\ref{LemCG},
whereas in the preceding discussion we did need a few lines to get
from Theorem~\ref{thm:ConlonGowers} to the lemma.
But the two theorems live in somewhat different worlds, since
Theorem~\ref{thm:ConlonGowers} assumes only that $F$ is $C_\kappa$-{\em free},
which is much weaker than requiring that it have even intersection with every $C_\kappa$.

As mentioned in Section~\ref{Stability}, the value $n^{-1/m_2(H)}$ is not
necessarily what's needed for Question~\ref{Gen'l?}.
For instance, if $H$ is two triangles joined by a $P_l$, then $m_2(H)=2$
(take $K$ to be one of the triangles), but the range where the question
is most interesting
(the point at which $\Q_H$ becomes likely)
is $p\asymp n^{-2/3}\log ^{1/3}n$, corresponding to all {\em vertices} being in triangles.
%turns out to have
%$p =\tilde{\Theta}(n^{-(l+3)/(l+5)})$.
On the other hand, in natural cases---e.g.
the (``balanced") $H$'s for which $K=H$ achieves the max in
\eqref{m2}---Theorem~\ref{TCGA} does give what should be the correct
extension of Lemma~\ref{LemCG}.
(It would be interesting to see if one could push the theorem to give
the correct extension in general; with our current approach this
would mainly require a fairly significant extension of
what we are getting from ``containers," and we haven't yet thought about plausibility.)

\bn

\section{Proof of Lemma \ref{lem:couplingdown}}
\label{SEC:LemmaCoupling}

\mn
By Corollary~\ref{cor:pip} with $l=\kappa-1$, there is a $K>1$ such that if $p > Kp^*$, then w.h.p.
\begin{align}
\label{eq:sigmaLB}
\text{every $\{x,y\}\in\Cc{V}{2}$ satisfies $\sigma^{\kappa-1}(x,y) = \Omega(\pi)$}
\end{align}
(where $\pi=\pi(n,p)$ is as in the corollary).
%and, to emphasize, $\sigma^{\kappa-1}(x,y)$ refers to paths in $G=G_{n,p}$.
We work in the coupling framework of
Section~\ref{subsec:Coupling}, taking $q = Kp^*$ and $G_0=G_{n,q}$.

% with $p>q=2p^*$ as in Lemma~\ref{lem:couplingdown}.

For Lemma \ref{lem:couplingdown} it is of course enough to show
\beq{fGfG0}
\Pr(\{G \notin \mathcal{T}\} \; \wedge \; \{G_0 \in \mathcal{T}\}) \ra 0.
\enq
%(since $\{G \notin \mathcal{T}\} \subseteq \{G_0 \notin \mathcal{T}\}
%\cup \{(G \notin \mathcal{T}) \wedge (G_0 \in \mathcal{T})\}$).
Note that $G_0\in \T$
%the event in \eqref{fGfG0}
implies $F_0\in \cee^\perp(G_0)$,
since we always have $F_0\in \cee^\perp_{\kappa}(G_0)$
(see \eqref{F0eperp}); thus \eqref{fGfG0} will follow from
\beq{fGF0}
\Pr(\{F \neq \0 \} \; \wedge \; \{F_0 \in \cee^\perp(G_0)\})\ra 0.
\enq
So it will be enough to show that
\beq{F0cee}
F_0 \notin \cee^\perp(G_0)
\enq
follows (deterministically) from
\beq{F0}
F \neq \0
\enq
combined with various statements that we already know hold w.h.p.
This is not hard, but is more circuitous than one might wish.
Roughly we show that, barring occurrence of some low probability event,
(i) presence of even one edge in $F$ forces $F$ to be large enough
(not very large) that $F_0 \neq \0 $, and
(ii)
$F_0$ is not substantial enough
to meet all $xy$-paths in $G_0-xy$ for an $xy\in F_0$, so any such
$xy$ is contained in a cycle witnessing \eqref{F0cee}.

\mn
{\em A convention.}
To slightly streamline the presentation we agree
that in this argument, appeals to a probabilistic statement $X$---e.g.
``$X$ implies" or ``by $X$"---actually refer to
{\em the conclusion of} $X$,
which conclusion will always be something that $X$ says holds w.h.p.
See the references to \eqref{eq:sigmaLB}, Lemma~\ref{LemCG}
and Proposition~\ref{prop:Frightsize} in the next paragraph for first instances
of this.

\medskip%Fix $\vt \in (0, 5^{-\kappa})$.
If \eqref{F0} holds, then
\eqref{eq:sigmaLB} and \eqref{xyF}
(for the lower bound) together with
Lemma~\ref{LemCG} (for the upper)
imply that
\beq{couple1}
\Omega(\pi) <|F| < n^2p/10.
\enq
Since $\pi q/p \gg 1$, the lower bound in \eqref{couple1} and the first part
of Proposition~\ref{prop:Frightsize} give $|F_0|\sim |F|q/p$, so
\beq{couple2}
0\neq |F_0| < (1+o(1)) n^2q/10.
\enq

\nin
In addition, Proposition~\ref{prop:routine},
\eqref{dFdG} and the second part of Proposition~\ref{prop:Frightsize} give
\[
d_{F_0}(v) <(1+o(1))nq/2 ~~\forall \, v \in V.
\]
Thus, setting $H_0=G_0\sm F_0$ and recalling the approximate ($nq)$-regularity of $G_0$
given by Proposition~\ref{prop:routine}, we have
\beq{couple3}
d_{H_0}(v) > (1-o(1))nq/2 ~~\forall \, v \in V.
\enq

Now choose an $xy\in F_0$ (recall \eqref{couple2} says $F_0\neq \0 $)
and let $X,Y$ be the $H_0$-components of $x$ and $y$.  By \eqref{couple3} and
Proposition~\ref{cpts} (applied to $G_0$), we have $|X|,|Y| > n/3$, which implies $X=Y$:
otherwise $X$ and $Y$ are disjoint and we have the contradiction
\[
(1-o(1))n^2q/9 < |\nabla_{G_0}(X,Y)| \leq |F_0| < (1+o(1)) n^2q/10,
\]
where the first inequality is given by Proposition~\ref{density}(a) (applied to $G_0$),
the second holds because $\nabla_{G_0}(X,Y)\sub F_0$,
%(since $X$ and $Y$ are distinct components of $H_0$),
and the third is given by \eqref{couple2}.

But this (i.e.\ $X=Y$) gives an $xy$-path in $H_0$, and adding $xy$ to this path
produces a cycle meeting $F_0$ only in $xy$; so we have \eqref{F0cee}.

\section{Proof of Lemma~\ref{lem:p=O(p)'}}
\label{SEC:LemmaP=O(p*)}

\mn
Here we first introduce the main assertions, Lemmas~\ref{lem:Claim2}
and \ref{lem:InterruptedPathsBound}, underlying
Lemma~\ref{lem:p=O(p)'}, and prove the latter assuming them.
The supporting lemmas are then proved in Sections \ref{subsec:Lemma6.1} and \ref{subsec:Lemma6.2}.

Note that for the proof of Lemma~\ref{lem:p=O(p)'},
Lemma~\ref{lem:Qsharpthresh} allows us to restrict attention to the range
\begin{align}
\label{eq:pgeq1-eps}
(1-\eps)p^*< p< Kp^*
\end{align}
(for any fixed
$\eps>0$), and that Lemma~\ref{LemCG} says
it's enough to show that for a given $\lambda=\lambda(n) \ra 0$,
\begin{align}
\label{eq:ETS1}
\Pr(\{G_{n,p} \in \Q\} \; \wedge \; \{0 < |F| < \lambda n^2p\}) \ra 0.
\end{align}

We again work with the coupling of Section~\ref{subsec:Coupling},
now taking $q=\vartheta p$ with a {\em fixed} $\vartheta \in (0,1)$
small enough to support the discussion below
(the rather mild constraints on $\vt$ are at
\eqref{eq:GminusG0capR} and \eqref{vt.constraint2}).
Define the random variables $\alpha$ and $\ga_0$ by
\begin{align}
\label{eq:defalpha}
\mbox{$|F| = \alpha n^2p/2~$ and $~|F_0| = \alpha_0 n^2q/2$.}
\end{align}

\mn
\emph{Definitions.} Henceforth a \emph{path} (with length unspecified) is a $P_{\kappa-1}$
(and an $xy$-path is a path whose endpoints are $x$ and $y$).
Our paths will always lie in $G$ and often in $G_0$.
We now write $\sigma(x,y)$ for
$\sigma^{\kappa-1}(x,y)$ (recall from
Section \ref{subsec:Paths} that this is the maximum size of a set of internally
disjoint $xy$-paths in $G$), and $\sigma_0(x,y)$ for the analogous quantity in $G_0$.
For $S \subseteq G$, a path $P$ is $S$-\emph{central}
if it contains an odd number of edges of $S$, at
least one of which is internal.
Let $\sigma(x,y;S)$ be the maximum size of a collection of internally disjoint
$S$-central $xy$-paths, and $\sigma_0(x,y;S)$ the corresponding quantity in $G_0$.
%The default for $S$ is $F$, and in this case we abbreviate $\sigma(x,y,F)$ by $\sigma'(x,y)$ and $\sigma_0(x,y,F)$ by $\sigma_0'(x,y)$.
An $(S,t)$-\emph{rope} is a $P_t$ whose terminal edges lie in $S$.
Set
\beq{R}
R(S) = \{ \{x,y\} \in \Cc{V}{2} : \sigma_0(x,y;S) > .25n^{\kappa-2}q^{\kappa-1}\}
\enq
and define events
\[     %\beq{R}
\R= \{|F \cap R(F_0)| \geq .12\alpha n^2p\}
\]    %\enq
and
\[
\pee =\{0 < |F| < \lambda n^2p\}
\]
(the second conjunct in \eqref{eq:ETS1}).

\begin{lemma}
\label{lem:Claim2}
There is a fixed $\eps>0$ such that for $p$ as in \eqref{eq:pgeq1-eps}, w.h.p.
\begin{align}
\label{eq:Claim2statement'}
G \in \mathcal{Q}\wedge\pee \;\; \Rightarrow \;\;
G\in \R.
%|F \cap R(F_0)| \geq .12\alpha n^2p.
\end{align}
\end{lemma}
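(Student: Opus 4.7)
\medskip\noindent\textbf{Proof plan for Lemma~\ref{lem:Claim2}.} I will work in the coupling of Section~\ref{subsec:Coupling} and aim to show that, for at least a $0.24$-fraction of $xy\in F$, the count $\sigma_0(x,y;F_0)$ of internally disjoint $F_0$-central $(x,y)$-paths in $G_0$ exceeds $0.25\,n^{\kappa-2}q^{\kappa-1}$. This gives $|F\cap R(F_0)|\geq 0.24|F|=0.12\alpha n^2p$, which is the event $\R$.

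The starting observation is that for $xy\in F$ and any $(\kappa-1)$-path $P$ in $G$ from $x$ to $y$, $P\cup\{xy\}$ is a $\kappa$-gon of $G$; since $F\in\cee_\kappa^\perp(G)$ and $xy\in F$, this forces $|F\cap P|$ to be odd. So $P$ fails to be $F$-central only when $|F\cap P|=1$ with the unique $F$-edge at a terminal (an ``interrupted'' path), and the number of interrupted paths through $xy$ is at most $(d_F(x)+d_F(y))\cdot\max_z\tau^{\kappa-2}(z,\cdot)=O(d_F(x)+d_F(y))$ by Corollary~\ref{ubtau} (noting $n^{\kappa-3}p^{\kappa-2}=n^{-\gO(1)}$ for $p\leq Kp^*$). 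Combined with Theorem~\ref{thm:Spencer} and Proposition~\ref{prop:tausig}, which give $\sigma^{\kappa-1}(x,y)=(1-o(1))n^{\kappa-2}p^{\kappa-1}$, this yields
\[\sigma(x,y;F)\geq(1-o(1))n^{\kappa-2}p^{\kappa-1}-O(d_F(x)+d_F(y)).\]

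I would then condition on $G$ and reveal $G_0$ as a $(q/p)$-random subgraph (viewpoint (A) of Section~\ref{subsec:Coupling}). Since internally disjoint paths of length $\kappa-1\geq 4$ are edge-disjoint, each $F$-central $(x,y)$-path survives in $G_0$ independently with probability $\vartheta^{\kappa-1}$, and its $F$-centrality automatically becomes $F_0$-centrality because $F_0\cap P=F\cap P$ once $P\subseteq G_0$. Theorem~\ref{thm:Chernoff} with a union bound over the $\binom{n}{2}$ endpoint pairs then delivers, w.h.p.\ simultaneously, $\sigma_0(x,y;F_0)\geq(1-o(1))\vartheta^{\kappa-1}\sigma(x,y;F)$; in particular $xy\in R(F_0)$ whenever $\sigma(x,y;F)>(0.25+\delta)n^{\kappa-2}p^{\kappa-1}$ for some fixed $\delta>0$, which by the previous display holds as soon as $d_F(x)+d_F(y)\leq\eta\,n^{\kappa-2}p^{\kappa-1}$ for a small enough fixed $\eta$.

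The main obstacle is the remaining combinatorial step: showing that at least $0.24|F|$ of the edges $xy\in F$ meet this small-$d_F$ condition. The minimality of $F$ only gives $d_F(v)\leq d_G(v)/2\sim np/2$ and, more generally, $|F\cap\nabla(S)|\leq|\nabla(S)|/2$ for every $S\subseteq V$, which by themselves produce only averaging-type bounds---typical $xy\in F$ have $d_F(x)+d_F(y)=O(np)$ via $\sum_v d_F(v)^2\leq np|F|$, not the $O(n^{\kappa-2}p^{\kappa-1})$ needed. The hard part of the argument will be to combine $|F|<\lambda n^2p$ ($\lambda\to 0$) with the cut-minimality constraints, or, more likely, to invoke the forthcoming Lemma~\ref{lem:InterruptedPathsBound}---which presumably furnishes a global cap on the total number of interrupted paths and therefore on how many $xy\in F$ can have large $d_F$ at both endpoints---so as to extract the desired constant fraction of ``good'' $F$-edges.
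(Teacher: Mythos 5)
Your ``coupling down'' step at the end is correct and matches the paper: once one knows that $\sigma(x,y;F)$ is large for a constant fraction of $xy\in F$, Chernoff applied to the survival of internally disjoint central paths (each surviving in $G_0$ with probability $\vartheta^{\kappa-1}$) indeed delivers the statement about $\sigma_0(x,y;F_0)$, and the paper's Lemma~\ref{prop:Claim1} is precisely that intermediate $G$-only statement.

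The problem is in how you propose to get there, and there are two genuine gaps. The first is the assertion that $\sigma^{\kappa-1}(x,y)=(1-o(1))n^{\kappa-2}p^{\kappa-1}$ for all pairs. In the range \eqref{eq:pgeq1-eps} the quantity $n^{\kappa-2}p^{\kappa-1}$ is only a small constant multiple of $\log n$ (about $(\kappa/(\kappa-1))(1\pm\eps)^{\kappa-1}\log n$), and this is exactly the sharp-threshold regime for $\Q$: the expected number of pairs with $\tau^{\kappa-1}(x,y)=0$ is $\Omega(n^{2-\kappa/(\kappa-1)+o(1)})$, which is polynomially large. Spencer's theorem and Corollary~\ref{cor:pip} do not apply at this density (their hypothesis $n^{\kappa-2}p^{\kappa-1}\ge K\log n$ need not hold for the relevant $K$), and no uniform lower bound on $\sigma(x,y)$ over all $\{x,y\}$ is true. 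The paper instead spends Propositions~\ref{prop:lightedgesJanson}--\ref{prop:F.99} (culminating in \eqref{eq:F.99to3}) proving the much weaker and carefully targeted fact that \emph{conditional on} $G\in\Q$, all but $o(|F|)$ of the edges $xy\in F$ have $\sigma(x,y)\ge(1-\gz)\gL$; the argument is specific to $F$ and uses $\Q$ plus the constraint $F\in\cee_\kappa^\perp(G)$ (every $\kappa$-gon meeting $F$ once must meet it again) in an essential way.

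The second gap is the one you flag yourself, but it is more serious than your framing suggests. The condition $d_F(x)+d_F(y)=O(n^{\kappa-2}p^{\kappa-1})=O(\log n)$ needed by your pointwise bound on interrupted paths is far too strong: $\pee$ only gives $|F|=\alpha n^2p/2$ with $\alpha\to 0$ at an arbitrary rate, so a typical $F$-degree is $\alpha np\approx \alpha\, n^{1/(\kappa-1)+o(1)}$, which dwarfs $\log n$ for all but extremely small $\alpha$. No constant fraction of $F$-edges will satisfy your condition, so the pointwise argument cannot yield the $0.24|F|$ you need. The paper's route is a global weighted bound: Proposition~\ref{prop:Claim1C} controls $\tau_S(v)$ (the number of $\kappa$-gons using two $S$-edges at $v$) roughly by $\gamma_v^2 n^{\kappa-1}p^\kappa$ where $\gamma_v=d_F(v)/(np)$, and the key inequality is then the convexity/half-degree step $\sum_v\gamma_v^2\le(\max_v\gamma_v)(\sum_v\gamma_v)\le\tfrac12\cdot\alpha n$, using \eqref{dFdG}. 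This second-moment control on $\sum_{xy\in F}\sigma^*(x,y)$, not a pointwise degree bound, is what makes the $.26$, $.73$, $.36$, $.13$ constants work out at \eqref{eq:Claim1eq6}. Finally, Lemma~\ref{lem:InterruptedPathsBound} is not the missing piece here: it counts $(S,t)$-ropes in $G_0$ and is used to bound $|R(S)|$ in \eqref{Timplies}, a different part of the proof of Lemma~\ref{lem:p=O(p)'}; the tool you actually need in place of it is Proposition~\ref{prop:Claim1C}.
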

\nin
(In other words,
$\Pr(G \in \mathcal{Q}\wedge\pee\wedge \ov{\R}) \ra 0.$
Of course $\R$ holds trivially if $F=\0$, so it's
only the upper bound in $\pee$ that's of interest here.)

\mn
{\em Remarks.}
For $\{x,y\} \in \binom{V}{2}$, $\sigma_0(x,y)$
should be around $n^{\kappa-2}q^{\kappa-1}$.
Lemma~\ref{lem:Claim2} says that, provided $G \in \Q\wedge \pee$,
it's likely that for a decent fraction of the edges $xy$ of $F$, even
$\sigma_0(x,y,F_0)$ is of this order of magnitude---which
is
{\em un}natural if $F_0$ is small relative to $G_0$
(since then paths should typically avoid $F_0$).
Viewed from Lemma~\ref{lem:Claim2} the parity requirement in the definition of
``central" may look superfluous, since a
path of $G_0$ joining ends of an edge of $F$ necessarily has odd intersection with $F_0$;
but this extra condition
will later play a brief but important role in justifying \eqref{RSimplies}.

\medskip
For the next lemma we temporarily expand the range of $q$ and $G_0$, assuming only
what's needed for the proof (though we will use the lemma only with $q$ and $G_0$ as above).
\begin{lemma}
\label{lem:InterruptedPathsBound}
For fixed $t \geq 3$,
$\q =\q (n) > n^{-1} \log^6 n$ and $G_0=G_{n,\q }$, w.h.p.:
for $S \sub G_0$, say with $|S| =\gb n^2\q /2$,
the number of $(S,t)$-ropes in $G_0$ is
\begin{align}
\label{eq:InterruptedPathsBound}
O(\max\{\gb ^2n^{t+1}\q ^t, \; \gb n^{t/2+2} \q ^{t/2+1}\}).
\end{align}
\end{lemma}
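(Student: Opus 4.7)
The plan is to reduce the rope count to a quadratic form in the adjacency matrix $A := A(G_0)$ and then bound that form using the spectral estimates of Proposition~\ref{prop:spectrum}. Let $\vec{d}_S \in \mathbb{R}^V$ denote the vector of $S$-degrees. An ordered $(S,t)$-rope $v_0 v_1 \ldots v_t$ is specified by its inner ends $v_1, v_{t-1}$, an $S$-neighbor $v_0$ of $v_1$ (contributing $d_S(v_1)$ choices), an $S$-neighbor $v_t$ of $v_{t-1}$ (contributing $d_S(v_{t-1})$), and a walk of length $t-2$ from $v_1$ to $v_{t-1}$ in $G_0$ (at most $(A^{t-2})_{v_1,v_{t-1}}$ choices, since paths are walks). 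Summing, the number of (ordered) $(S,t)$-ropes is at most
\[
\sum_{v_1,v_{t-1}} d_S(v_1)\,(A^{t-2})_{v_1,v_{t-1}}\, d_S(v_{t-1}) \;=\; \vec{d}_S^{\,T} A^{t-2} \vec{d}_S,
\]
and the unordered count is half of this.

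Diagonalizing $A$, let $\lambda_1 \geq \cdots \geq \lambda_n$ be its eigenvalues with orthonormal eigenvectors $v_1,\ldots,v_n$, and set $c_i := \langle \vec{d}_S, v_i \rangle$. Then
\[
\vec{d}_S^{\,T} A^{t-2} \vec{d}_S \;=\; \sum_{i=1}^n c_i^2 \lambda_i^{t-2} \;\leq\; c_1^2 \lambda_1^{t-2} \;+\; \bigl(\max_{i \geq 2}|\lambda_i|\bigr)^{t-2} \sum_{i\geq 2} c_i^2.
\]
The hypothesis $q > n^{-1}\log^6 n$ is exactly what is needed to invoke all of Proposition~\ref{prop:spectrum}, giving (w.h.p.) $\lambda_1 \sim nq$, $\max_{i \geq 2}|\lambda_i| = O(\sqrt{nq})$, and $v_{1,j} = (1\pm o(1))n^{-1/2}$ uniformly in $j$. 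Flatness of $v_1$ then yields $c_1 = (1 \pm o(1))\cdot 2|S|/\sqrt{n}$, and so the principal term is
\[
c_1^2 \lambda_1^{t-2} \;=\; O\!\left(|S|^2 (nq)^{t-2}/n\right) \;=\; O(\beta^2 n^{t+1} q^t),
\]
matching the first bound in \eqref{eq:InterruptedPathsBound}.

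For the bulk of the spectrum, $(\max_{i\geq 2}|\lambda_i|)^{t-2} = O((nq)^{(t-2)/2})$. By Parseval, $\sum_{i \geq 2} c_i^2 \leq \|\vec{d}_S\|^2 = \sum_v d_S(v)^2$, and Proposition~\ref{prop:routine} (applicable since $q \gg n^{-1}\log n$) gives $d_S(v) \leq d_{G_0}(v) = O(nq)$ w.h.p.\ for every $v$, whence
\[
\|\vec{d}_S\|^2 \;\leq\; \bigl(\max_v d_S(v)\bigr)\cdot 2|S| \;=\; O(\beta n^3 q^2).
\]
Multiplying these two estimates produces the second bound $O(\beta n^{t/2+2} q^{t/2+1})$, so the two terms in \eqref{eq:InterruptedPathsBound} fall out respectively as the contributions of the leading eigenvalue and of the ``bulk'' spectrum. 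I do not foresee a serious obstacle: the rope-counting inequality is immediate from walks $\supseteq$ paths, and the lower bound on $q$ is precisely calibrated to make the spectral input clean.
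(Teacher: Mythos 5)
Your proof is correct and follows essentially the same approach as the paper: bound the rope count by the quadratic form $\vec{d}_S^{\,T} A^{t-2}\vec{d}_S$, then split the spectral sum into the leading-eigenvalue contribution (using the flatness of $v_1$ from \eqref{eq:eigvecv1}) and the bulk (using $\max_{i\ge 2}|\lambda_i|=O(\sqrt{nq})$ together with the crude bound $\|\vec{d}_S\|_2^2 \le \Delta_{G_0}\cdot 2|S|$). The only cosmetic difference is that you spell out the walk-counting inequality behind $\vec{d}_S^{\,T} A^{t-2}\vec{d}_S$, which the paper states without comment.
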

\nin
%(Though $\q$ will later be $q$, we use the strange font here
%since $q$ has a more particular meaning.)

\mn
{\em Remarks.}
Note this is of interest only when $\gb\ll 1$,
since Proposition~\ref{prop:routine}
bounds (w.h.p.) the number in question by $(1+o(1))n^{t+1}q^t$;
see Section~\ref{subsec:Lemma6.2} for a little more on the bounds in
\eqref{eq:InterruptedPathsBound}.
The bound is also correct, but more trivial, when $t=2$.
The lemma doesn't actually require $S\sub G_0$: the proof shows that,
for any $S\sub E(K_n)$ (of the stated size) with $\gD_S =O(nq)$
(where $\gD$ is maximum degree), we have the same bound
for the number of $P_t$'s with terminal edges in $S$ and internal edges in $G_0$.

\mn
{\em Preview.}
The proof of Lemma~\ref{lem:p=O(p)'}, which we are about to give, is based mainly on
``coupling up": using information about $(G_0,F_0)$ to constrain what
happens when we choose $G\sm G_0$.
(To this extent our strategy is similar to that of \cite{DHK}, but the resemblance ends there.)
On the other hand, the proof of the crucial
Lemma~\ref{lem:Claim2} in Section~\ref{subsec:Lemma6.1}
is based on ``coupling down": most of the work there is devoted to
the proof of a similar statement (Lemma~\ref{prop:Claim1}) involving only $G$
(not $G_0$), from which the desired hybrid statement follows easily {\em via} coupling.
In sum, we couple down to show that $\R$ is likely
(precisely, the conjunction of its failure with $\Q\wedge\pee$ is unlikely),
and couple up to show it is {\em un}likely.
A little more on the latter:

We would like to say that if $G_0$ is sufficiently nice---as it will be w.h.p.---then
$\pee\wedge\R$
is unlikely; this gives \eqref{eq:ETS1} {\em via} Lemma~\ref{lem:Claim2}.
The main point we need to add to
Lemmas~\ref{lem:Claim2}
and \ref{lem:InterruptedPathsBound} is a deterministic one:  if $G_0$
enjoys relevant genericity properties, together with the
conclusion of
Lemma~\ref{lem:InterruptedPathsBound},
then, for each $S\sub G_0$,
$R(S)$ is fairly small (depending on $|S|$; see \eqref{Timplies}).
Combined with $F\neq\0$ (from $\pee$),
this will allow us to say that the lower bound on
$|G \cap R(F_0)|$ ($= |F \cap R(F_0)|$) in
$\R$ is
larger by a crucial factor $\ga^{-\gO(1)}$ than $|R(F_0)|p$---its
natural value when we ``couple up"---which {\em ought} to make $\R$ unlikely.
But of course $F_0$ depends on $G$; so, given
$G_0$, we are forced to sum
the probability of this supposedly unlikely event over possible values $S$ of
$F_0$, which turns out to mean that the whole argument would collapse if we were to
replace the above $\ga^{-\gO(1)}$ by $\ga^{-o(1)}$.
(Here we again use $\pee$, in this case to say $\ga$ is small.)

A word on presentation.
We prove the desired
\beq{QP}
\Pr(\Q\wedge \pee) = o(1)
\enq
(= \eqref{eq:ETS1})
by producing a list of unlikely events and showing
that at least one of these must hold if $\Q\wedge\pee$ does.
A
more intuitive formulation might, for example, begin:
``By Lemma~\ref{lem:Claim2} (since we assume $\Q\wedge \pee$), {\em we may assume}
$\R$."
But note this would really mean, not that we
{\em condition} on $\R$---not something we can hope to understand---but
that we need only bound probabilities $\Pr(\sss\wedge \R)$
for $\sss$'s of interest, and for a formal discussion this
seems most clearly handled by something like the present approach.

\mn

For the derivation of Lemma~\ref{lem:p=O(p)'}
we need two more events (supplementing $\pee,\Q,\R$ above).
The first of these is simply
\[
\sss = \{ \ga_0\sim \ga\}
\]
(i.e.
for any $\eta>0$, $\ga_0=(1\pm \eta)\ga$ for large enough $n$;
recall $\ga,\ga_0$ were defined in \eqref{eq:defalpha}).
The second, which we call $\T$, is the conjunction of a few properties of $G_0$
that we already know hold w.h.p., namely:
$|G_0|\sim n^2q/2$ (see Proposition~\ref{prop:routine});
\eqref{eq:tausummarybig} and
\eqref{eq:tausummarymiddle} for $l \in [\kappa -1,2\kappa -6]$
(meaning, in view of \eqref{eq:pgeq1-eps}, \eqref{eq:tausummarymiddle}
if $l=\kappa-1$ and
\eqref{eq:tausummarybig} otherwise);
and the conclusion of Lemma~\ref{lem:InterruptedPathsBound}
for $t\leq \kappa-1$ (actually we only need this for even $t$).
We first outline and then fill in details.

We will show
\beq{RS}
\Pr(\R\wedge \{F\neq\0\} \wedge \ov{\sss})=o(1).
\enq
(This is easy and a secondary use of $\R$.
Note $\{F\neq\0\}$ is implied by $\pee$.)

We will also show that (\emph{deterministically})
\beq{RSimplies}
\R\wedge\{F\neq\0\} \wedge \sss ~\Longrightarrow ~|(G \setminus G_0) \cap R(F_0)| >  .1\alpha n^2p
\enq
provided $\vt$ is sufficiently small
(this is again easy), and, as mentioned in the preview,
\beq{Timplies}
\T ~\Longrightarrow ~ |R(S)| = O(\ga_S^{1+\gd}n^2)
\enq
for some fixed $\gd>0$ and all $S\sub G_0$,
where we set $\ga_S= 2|S|/(n^2q)$.
%(Note the last two implications are deterministic.)
Thus the conjunction of $\pee,\R,\sss$ and $\T$ implies
(again, deterministically), the event---call it $\U$---that
$|G_0|< n^2q$ (say) and there is an $S\sub G_0$
(namely the one that will become $F_0$) satisfying (say):
\beq{U}
\mbox{$\ga_S<2.1\gl$, $|R(S)|=O(\ga_S^{1+\gd}n^2)$,
and $|(G \setminus G_0) \cap R(S)| >  .09\ga_S n^2p$}.
\enq

Thus, finally, for \eqref{eq:ETS1} it is enough to show (by a routine calculation)
\beq{Utoshow}
\Pr(\U) =o(1).
\enq
(Because:  since $\ov{\U}$ implies $\ov{\pee}\vee\ov{\R}\vee\ov{ \sss}\vee\ov{\T}$,
\eqref{Utoshow} implies
\[
\Pr(\Q\wedge (\ov{\pee}\vee\ov{\R}\vee\ov{ \sss}\vee\ov{\T}))=\Pr(\Q)-o(1);
\]
but the l.h.s. here is at most
\[
\Pr(\Q\wedge \ov{\pee}) + \Pr(\Q\wedge\pee\wedge\ov{\R}) +
\Pr(\pee\wedge \R\wedge \ov{ \sss})+\Pr(\ov{\T})=\Pr(\Q\wedge\ov{\pee})+o(1)
\]
(the second and third terms on the l.h.s. being bounded by Lemma~\ref{lem:Claim2} and \eqref{RS}
respectively);
so we have $\Pr(\Q\wedge \pee) =\Pr(\Q)-\Pr(\Q\wedge \ov{\pee}) =o(1)$.)

\begin{proof}[Proof of \eqref{RS}]

\mn
If $F\neq \0$ (i.e. $\alpha> 0$) and $\R$ holds, then
$F\cap R(F_0)\neq \0$, while by \eqref{xyF}, for any $xy\in F\cap R(F_0)$,
\[
|F|> \gs(x,y)\geq \gs_0(x,y)
> .25n^{\kappa-2}q^{\kappa-1} = \Omega(\log n).
\]
But then (since $\log n \gg p/q$)
Proposition \ref{prop:Frightsize} says that w.h.p.
$|F_0| \sim \vartheta |F|$, which is the same as $\sss$.\qedhere

\end{proof}

\begin{proof}[Proof of \eqref{RSimplies}]

Note it is always true that
$     %\beq{eq:G0capRlsmall}
G_0 \cap R(F_0) \subseteq F_0,
$    %\enq
since
the endpoints of an $xy \in (G_0 \cap R(F_0)) \setminus F_0$ would be joined by a path (many paths)
having odd intersection with $F_0$, and adding $xy$ to such a path would produce a $C_\kappa$
%, say $Q$, with $|F_0\cap Q|$ odd.
having odd intersection with $F_0$.
(As mentioned earlier, this is the reaon for ``odd" in the definition of central.)
So if $\R$, $\sss$ and $\{F_0\neq \0\}$ hold (and $\vt $ is slightly small) then
\begin{align}
\label{eq:GminusG0capR}
|(G \setminus G_0) \cap R(F_0)| > .12 \alpha n^2p - (1+o(1))\ga n^2q/2  > .1\alpha n^2p.
\end{align}\qedhere

\end{proof}

\begin{proof}[Proof of \eqref{Timplies}]

Set $c=(\kappa-3)/2$.
For $l\in [c]$ and $\0\neq S\sub G_0$
(for $S=\0$ there is nothing to show), call an $xy$-path $(S,l)$-{\em central}
if it is $S$-central and at
least one of its $S$-edges is at distance $l$ (along the path) from one of $x,y$.
(So a path may be $(S,l)$-central for several $l$'s.)
Let $\sigma_0(x,y;S,l)$
be the maximum size of a collection of internally disjoint $(S,l)$-central $xy$-paths
in $G_0$ and
\begin{align}
\label{eq:defRl}
R_l(S) = \{ \{x,y\} \in \Cc{V}{2} : \sigma_0(x,y;S,l) > (.25/c)n^{\kappa-2}q^{\kappa-1}\},
\end{align}
and notice that
\beq{Rifnec}
\mbox{$R(S)\sub \cup_{l\in [c]}R_l(S)$.}
\enq

\medskip
Supposing temporarily (through \eqref{rhophi}) that $S$ and $l$ have been specified, we abbreviate
$\sigma_0(x,y;S,l)=\vs(x,y)$, $R_l(S)=R_l$ and use simply {\em ``rope"} for ``$(S,2l+2)$-rope"
(defined before Lemma~\ref{lem:Claim2}).
Set $|R_l|=\rho_l n^2$ and
\beq{r}
r=2(\kappa-1)-2(l+1) =2(\kappa-l)-4 ~\in [\kappa-1,2\kappa-6].
\enq

\mn
We next show that if $G_0$ satisfies
\beq{T}
\mbox{$T:=\max_{u,v}\tau^r(u,v) = O(n^{r-1}q^r)$}
\enq
(as implied by \eqref{eq:tausummarybig} and
\eqref{eq:tausummarymiddle}, so by $\T$),
then
\beq{rope.count}
\mbox{the number of ropes is
$\Omega(\rho_l n^{2l+3} q^{2l+2} ).$}
\enq

\begin{proof}

Say a rope $P=(u_{l+1}\dots u_1,z,v_1\dots v_{l+1})$
is {\em generated by} $\{x,y\}$
if there are internally disjoint paths
$(z, u_1\dots u_{\kappa-2},w)$ and
$(z,v_1\dots v_{\kappa-2},w)$ with $\{z,w\}=\{x,y\}$.
Each $\{x,y\}\in \binom{V}{2}$ generates at least
$2\binom{\lfloor\vs(x,y)/2\rfloor}{2}$ such ropes
(since a set of $a$
internally disjoint $(S,l)$-central $xy$-paths, each with an $S$-edge
at distance $l$ from $x$, produces $\binom{a}{2}$ of them), while
the number of
pairs generating a given rope is at most $T$
(since in the scenario above, the complement of $P$ in the cycle
$(z,u_1\dots u_{\kappa-2},w,v_{\kappa-2}\dots v_1,z)$
is a path of length $r$ (see \eqref{r})
centered at $w$, so with $P$ determines $\{x,y\}$).
Thus the number of ropes
is at least
\[
\mbox{$T^{-1}\sum_{\{x,y\} \in R_l} 2\binom{\lfloor \vs(x,y)/2\rfloor}{2}
=
\gO(|R_l|(n^{\kappa-2}q^{\kappa-1})^2/T)
= \Omega(\rho_l n^{2l+3} q^{2l+2} ).$}\qedhere
\]
\end{proof}

If we now also assume the conclusion of Lemma~\ref{lem:InterruptedPathsBound}
for $t=2l+2$
(again, this is contained in $\T$), then combining that upper bound
with the lower bound in \eqref{rope.count}
gives
\beq{rhophi}
\rho_l =O(\max\{\ga_S^2, \ga_S (nq)^{-l}\})=O(\ga_S^{1+\gd}),
\enq
with $\delta>0$ depending only on $\kappa$.
(Here we use $\ga_{S}\geq n^{-2}$, valid since $S\neq \0$.)

So, now letting $l$ vary,
it follows that if $G_0$ satisfies $\T$
(and so all relevant instances of \eqref{T} and \eqref{eq:InterruptedPathsBound}),
then \eqref{rhophi} holds for all $l\in [c]$, which in view of \eqref{Rifnec} bounds
$|R(S)|$ as in \eqref{Timplies}.\qedhere

\end{proof}

(It may be worth noting that for $l=0$ the above argument gives only
$\rho_l =O(\ga_S)$, which loses the crucial $\gd$ in \eqref{rhophi}; thus the insistence
on {\em central} paths in $\R$ and Lemma~\ref{lem:Claim2}.)

\begin{proof}[Proof of \eqref{Utoshow}]
Given $G_0$, $S$, we have
$|(G \setminus G_0) \cap R(S)|\sim {\rm Bin}(m,p')$, with $m\leq |R(S)|$ and
$p'<p$
defined by $(1-q)(1-p')=1-p$
(as in (B) of Section~\ref{subsec:Coupling}).
So for $|R(S)|$ as in \eqref{U},
Theorem~\ref{thm:Chernoff'} gives
\[
\Pr(|(G \setminus G_0) \cap R(S)| >  .09\alpha_S n^2p) < \exp[- \gO(\ga_Sn^2p\log (1/\ga_S))],
\]
where the implied constant depends on $\gd$ but not on $\vt$.
Thus, assuming $|G_0|< n^2q$ (as given by $\U$),
setting $\ga_s = 2s/(n^2q) $ (where $s$ will be $|S|$, so $\ga_s=\ga_S$),
and
summing over $s< 2.1\gl n^2q$,
we have
\begin{eqnarray}
\Pr(\U|G_0) &<&\mbox{$\sum_s\binom{n^2q}{s}\exp[- \gO(\ga_sn^2p\log (1/\ga_s))]$}
\nonumber\\
&<&\mbox{$\sum_s \exp[\ga_sn^2p\{(\vt/2) \log (2e/\ga_s)- \gO(\log (1/\ga_s)\}]$},
\label{vt.constraint2}
\end{eqnarray}
which is $o(1)$ for small enough $\vt$
(implying \eqref{Utoshow} since
\[\Pr(\U) =\sum\{\Pr(G_0)\Pr(\U|G_0):|G_0|<n^2q\}).
\]\qedhere

\end{proof}

\subsection{Proof of Lemma~\ref{lem:Claim2}}
\label{subsec:Lemma6.1}

Fix $\eps>0$ (as in \eqref{eq:pgeq1-eps})
small enough to support the proofs of Propositions~\ref{Q's} and \ref{prop:Claim1C}
below; these are our
only constraints on $\eps$, and it will be clear they are satisfiable.
We continue to assume
that $p$ is as in \eqref{eq:pgeq1-eps}.

Most of our effort here is devoted to proving the following
variant of
Proposition~\ref{lem:Claim2} in which we replace $\gs_0(x,y,F_0)$ by
$\gs(x,y,F)$ and $q$ by $p$.

\begin{lemma}
\label{prop:Claim1}
W.h.p.
\begin{align}
\label{eq:Claim1statement}
G \in \Q\wedge \pee \;\; \Longrightarrow \;\;
|\{ xy \in F : \sigma(x,y;F) > .26n^{\kappa-2}p^{\kappa-1}\}| \geq .13\alpha n^2p.
\end{align}
\end{lemma}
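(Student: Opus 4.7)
The plan is to prove the equivalent contrapositive: defining $B = \{xy \in F : \sigma(x,y;F) \leq .26\,n^{\kappa-2}p^{\kappa-1}\}$, it suffices to show $|B| \leq .74\,|F|$ (since $|F|=\alpha n^2p/2$). I would do this via a local lower bound on $\sigma(x,y;F)$ followed by averaging.

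\emph{Local inequality.} For $xy \in F$, any $(\kappa-1)$-path $P$ of $G$ joining $x,y$ has odd intersection with $F$, since closing with $xy$ produces a $C_\kappa$ and $F \in \cee^\perp_\kappa(G)$. A non-$F$-central such $P$ therefore has its unique $F$-edge at a terminal position, and the complementary $(\kappa-2)$-subpath lies in $G\setminus F$. Case~\eqref{eq:tausummarysmall} of Corollary~\ref{ubtau} is applicable throughout our range (a short computation gives $n^{\kappa-3}p^{\kappa-2}=n^{-\Omega(1)}$), so $\tau^{\kappa-2}(\cdot,\cdot)=O(1)$ uniformly and the number of non-central $xy$-paths is at most $C(d_F(x)+d_F(y))$ for a fixed $C$. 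Because the component/induced-matching bounds (i), (ii) in the proof of Proposition~\ref{prop:tausig} concern only the intersection graph $\Gamma^{\kappa-1}_{x,y}$, they apply unchanged to its induced subgraph on $F$-central paths, giving $\tau_c(x,y)-\sigma(x,y;F)=O(1)$ and hence
\[
\sigma(x,y;F) \geq \tau^{\kappa-1}(x,y) - C(d_F(x)+d_F(y)) - C'.
\]
A uniform lower bound $\tau^{\kappa-1}(x,y) \geq c\,n^{\kappa-2}p^{\kappa-1}$ for some $c>.26$ --- from Theorem~\ref{thm:Spencer} when $n^{\kappa-2}p^{\kappa-1}$ is a sufficiently large multiple of $\log n$, and from Theorem~\ref{TJanson} applied to the path events (as in the proof of Lemma~\ref{lem:Qsharpthresh}) in the $p=\Theta(p^*)$ regime --- then forces every $xy\in B$ to satisfy $d_F(x)+d_F(y) = \Omega(\log n)$.

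\emph{Averaging step (main obstacle).} Summing over $xy\in B$ gives
\[
\Omega(|B|\log n) \leq \sum_{xy\in F}(d_F(x)+d_F(y)) = \sum_v d_F(v)^2,
\]
so the whole argument reduces to upper-bounding $\sum_v d_F(v)^2$ sharply. I expect this to be the hard part: the trivial estimate $\sum_v d_F(v)^2 \leq (np/2)(2|F|) = O(\alpha n^3p^2)$ is far too weak, yielding $|B| = O(\alpha n^3p^2/\log n)$, which in our range actually \emph{exceeds} $|F|=\alpha n^2p/2$ since $np\gg\log n$. To circumvent this I plan to invoke the stability structure behind Lemma~\ref{LemCG}: the largest cut-translate $F' = F + \nabla_G(A)$ is $C_\kappa$-free, and Theorem~\ref{thm:ConlonGowers} makes it essentially bipartite with respect to some $A$, so $F = F'\triangle\nabla_G(A)$ is a small symmetric difference of a cut with a near-cut, of total size $O(\lambda n^2p)$ under $\pee$. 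Combined with the random-graph regularity of Propositions~\ref{prop:routine} and \ref{density} and Chernoff-type control on how the ``errors'' distribute over vertices, this should force $d_F(v)=O(\alpha np)$ for all but a negligible set of $v$, yielding $\sum_v d_F(v)^2 = O(\alpha^2 n^3p^2)$, the Cauchy--Schwarz lower bound up to a constant.

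\emph{Conclusion.} With the tight sum bound, $|B| = O(\alpha^2 n^3p^2/\log n) = O(\alpha np/\log n)\,|F|$, which is $o(|F|)$ once $\lambda = o(\log n/(np))$; we are free to make this choice since $\lambda(n)\to 0$ is at our disposal in \eqref{eq:ETS1} and $\log n/(np)\to 0$ in our range. In particular $|B|\leq .74\,|F|$ for large $n$, proving the lemma. The local inequality amounts to assembling tools already developed in Section~\ref{SEC:Tools}; it is the degree-sum control, requiring the stability-based structural input, where the real work lies.
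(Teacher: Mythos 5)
Your proposal has the right general shape---combine a lower bound on the number of paths with an upper bound on non-$F$-central ones, and deduce that most $xy\in F$ have many disjoint $F$-central paths---but it diverges from the paper at three essential points, and two of them are genuinely fatal.

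\textbf{1.\ The uniform lower bound on $\tau^{\kappa-1}(x,y)$ is false.}
For $p$ near $(1-\eps)p^*$ (which \eqref{eq:pgeq1-eps} permits, with $\eps$ fixed small), $\Lambda=n^{\kappa-2}p^{\kappa-1}\sim(1-\eps)^{\kappa-1}(\kappa/(\kappa-1))\log n$, and Theorem~\ref{TJanson} gives only $\Pr(\tau^{\kappa-1}(x,y)<c\Lambda)\lesssim\exp[-\varphi(c-1)\Lambda]$. Since $\varphi(c-1)<1$ for every $c>0$ and $(1-\eps)^{\kappa-1}<1$, the exponent is strictly below $(\kappa/(\kappa-1))\log n$, and a union bound over the $\Theta(n^2p)$ edges of $G$ does not give $o(1)$---indeed, w.h.p.\ some edges of $G$ \emph{do} have $\tau^{\kappa-1}$ much smaller than $c\Lambda$ for any fixed $c>0$. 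The paper does \emph{not} claim or need a uniform lower bound; it proves the much more delicate \eqref{eq:F.99to3} ($|F(1-\zeta)|=o(|F|)$) via Propositions~\ref{prop:lightedgesJanson}--\ref{prop:F.99}, and the key step, Proposition~\ref{prop:F.01}, uses the hypothesis $G\in\Q$ and the cycle-space structure ($F\in\cee_\kappa^\perp(G)$, so every $C_\kappa$ through an $F$-edge has another $F$-edge) to show that a ``light'' $F$-edge forces a ``heavy'' one, and distinct light edges force disjoint heavy sets. Nothing in your argument plays this role; you would have to develop an analogue of Propositions~\ref{Q's} and \ref{prop:F.01} to handle the pairs where $\tau^{\kappa-1}$ is small.

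\textbf{2.\ The per-pair bound $\sigma^*(x,y)\leq C(d_F(x)+d_F(y))$ loses a polynomial factor, and your $\lambda$ is not free.}
Your bound is correct, but very lossy: most $F$-edges $xv$ at $x$ have $\tau^{\kappa-2}(v,y)=0$, so the actual count of non-central $xy$-paths is typically $\ll d_F(x)+d_F(y)$. The paper instead double-counts $C_\kappa$'s: each non-central $xy$-path completes (with $xy$) to a $C_\kappa$ with two $F$-edges at $x$ or $y$, and Proposition~\ref{prop:Claim1C} bounds $\varphi_v$ by roughly $d_F(v)^2\cdot n^{\kappa-3}p^{\kappa-2}/2$---a factor $n^{\kappa-3}p^{\kappa-2}\asymp n^{-1/(\kappa-1)+o(1)}$ smaller than your $\binom{d_F(v)}{2}\cdot O(1)$-type bound. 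Plugging your bound into the averaging, even with the (over-optimistic) $\sum_v d_F(v)^2=O(\alpha^2n^3p^2)$, gives $|B|/|F|\asymp\alpha np/\log n\asymp\alpha n^{1/(\kappa-1)+o(1)}$, which for this to be $<.74$ requires $\alpha\lesssim n^{-1/(\kappa-1)}$, i.e.\ polynomial decay of $\lambda$. But Lemma~\ref{LemCG} only rules out $|F|\geq cn^2p$ for \emph{fixed} $c$, so the $\lambda\to0$ in \eqref{eq:ETS1} may decay arbitrarily slowly and is not at your disposal. You correctly flag the degree-sum control as the ``main obstacle,'' but the obstacle is worse than you indicate: it is the combination of the lossy path count and the uncontrolled $\lambda$ that leaves a polynomial hole.

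\textbf{3.\ On the $\sum_v d_F(v)^2$ bound.}
The paper does not achieve $O(\alpha^2n^3p^2)$; it proves (using $d_F(v)\leq d_G(v)/2$ from \eqref{dFdG}, $|B_\theta|:=|\{v:d_F(v)>\theta np\}|=o(n)$ from $\alpha=o(1)$, and Proposition~\ref{density}) the bound $\sum_v d_F(v)^2\leq(1/4+O(\theta))\alpha n^3p^2$---a factor $\alpha^{-1}$ weaker than what you hope for. This weaker bound suffices precisely because the paper's $\varphi_v$ count has the extra $n^{\kappa-3}p^{\kappa-2}$ factor described above. There is also no reason to expect your stronger bound to hold; nothing about $F$ being a small symmetric difference with a near-cut forces its degrees to be $O(\alpha np)$ outside a negligible set.

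In short: the real work the paper does is (i) showing $|F(1-\zeta)|=o(|F|)$ using $G\in\Q$ and the $\cee_\kappa^\perp$ structure (Propositions~\ref{prop:lightedgesJanson}--\ref{prop:F.99}), and (ii) bounding the number of $C_\kappa$'s with two $F$-edges at a common vertex by Proposition~\ref{prop:Claim1C}, whose $\mu=\gamma_v^2n^{\kappa-1}p^\kappa/2$ is the sharp target. Your proposal substitutes for (i) a uniform claim that is false, and for (ii) a bound that is polynomially weaker, and both substitutions break the argument.
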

\nin
``Coupling down" will then easily get us to Lemma~\ref{lem:Claim2} itself.
(The extra .01's---relative to the pretty arbitrary .25 and .12 in \eqref{R},
\eqref{eq:Claim2statement'}---leave a little room for this.)

\mn
{\em Preview.}
The proof of Lemma~\ref{prop:Claim1} breaks into two parts, roughly (w.h.p.):
(a) if $G \in \Q$ (here we don't need to assume $G\in \pee$),
then $\gs(x,y)$ is close to its natural value for most $xy\in F$
(see the paragraph following the proof of Proposition~\ref{prop:F.99});
(b) a decent fraction of the paths produced in (a) are
$F$-central (shown by limiting the number
that are {\em not}; this is based on Proposition~\ref{prop:Claim1C} and does
assume $G\in\pee$).

\mn
\emph{Definitions.}
%We continue to use the notations $\sigma(x,y)$, $\sigma_0(x,y)$, $\sigma(x,y;S)$, $\sigma_0(x,y;S)$
%defined early in Section \ref{SEC:LemmaP=O(p*)}.
It will be convenient to set
\[
\gL  =n^{\kappa-2}p^{\kappa-1} ,
\]
since this quantity---essentially the typical number of paths in $G$ joining a given pair of
vertices---will appear repeatedly below.
We write $Q\sim Q'$ when $Q, Q'$ are distinct $C_\kappa$'s sharing at least one edge.
%(they will always be contained in $G$).
For edges $e,f$ of $G$, we take
\beq{esimf}
e\sim f ~~\Leftrightarrow ~~ \mbox{[some $C_\kappa$ of $G$ contains both $e$ and $f$]},
\enq
\beq{esimf2}
e\approx f ~~\Leftrightarrow ~~ \mbox{[there are $C_\kappa$'s $Q\sim Q'$ of $G$
with $e\in Q$ and $f\in Q'$]},
\enq
$S(e) =\{g\in G:e\sim g\}$, and $T(e) =\{g\in G:e\approx g\}$.
For $\gamma \in (0,1)$, let
\[
L(\gc) =\{\{x,y\}\in \Cc{V}{2}: \sigma(x,y) < \gamma \gL \}
\] and
$F(\gc)=F\cap L(\gc)$.
Finally, with $C$ as in Proposition~\ref{prop:tausig} for $l=\kappa-1$
(and, say, $\gd=1/\kappa$),
let $\mathcal{S}$ be the event that $G$ satisfies \eqref{eq:tausig}
so \emph{not} the $\sss$ used above).

\medskip
Fix $\gz= .01$.
Our goal in the next four propositions is to show that $F(1-\gz)$ is small,
accomplishing (a) of our outline above.
We do this by showing separately
(in Propositions~\ref{prop:F.01} and \ref{prop:F.99}, using the tools
provided by Propositions~\ref{prop:lightedgesJanson} and \ref{Q's})
that $F(\gz)$
and $F(1-\gz)\sm F(\gz)$ are small.
%Propositions~\ref{prop:lightedgesJanson} and \ref{Q's} are tools for these.

\begin{prop}
\label{prop:lightedgesJanson}
For $\gamma \in (0,1)$ and distinct $\{x_1,y_1\},\ldots,\{x_c,y_c\} \in \binom{V}{2}$,
\begin{align}
\label{eq:lightedgesJanson}
\Pr(\mathcal{S} \wedge  \{\{x_i,y_i\} \in L(\gc) ~\forall \, i \in [c]\} )
\leq n^{-(c-o(1)) (\kappa/(\kappa-1)) (1-\eps)^{\kappa-1} \varphi(\gamma-1)}.
\end{align}
\end{prop}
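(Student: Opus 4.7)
The plan is to apply Janson's Inequality (Theorem~\ref{TJanson}) to the family of all $(\kappa-1)$-paths in $K_n$ whose endpoints form one of the prescribed pairs $\{x_i,y_i\}$. Taking the $A_j$'s to be the edge sets of such paths, $X$ becomes $\sum_{i=1}^{c}\tau^{\kappa-1}(x_i,y_i)$, with $\mu := \E X \sim c\gL$ (distinct pairs contribute disjoint families of paths in $K_n$).

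First I would verify $\ov{\gD}\sim\mu$. The diagonal contributes exactly $\mu$, so the point is to show the off-diagonal sum, over ordered pairs $(P,P')$ of distinct $(\kappa-1)$-paths that share at least one edge—either joining the same prescribed pair $\{x_i,y_i\}$ or joining two different prescribed pairs—weighted by $p^{|E(P)\cup E(P')|}$, is $o(\mu)$. A routine count, mirroring the one in the proof of Proposition~\ref{prop:tausig}, handles both contributions for $p\asymp p^*$.

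Next I would invoke $\sss$ to pass from $\gs$ to $\tau^{\kappa-1}$: Proposition~\ref{prop:tausig} gives $\tau^{\kappa-1}(x_i,y_i)\leq \gs(x_i,y_i)+C$, so on $\sss\wedge\bigwedge_i\{\{x_i,y_i\}\in L(\gc)\}$ we have $X<c(\gc\gL+C)=(1+o(1))c\gc\gL$ (using $\gL\asymp\log n\gg C$). The Janson lower-tail bound with $t=\mu-X\geq(1-\gc-o(1))\mu$, combined with continuity of $\vp$ at $\gc-1$, then yields
\[
\Pr\!\left(\sss\wedge \bigwedge_{i}\{\{x_i,y_i\}\in L(\gc)\}\right)\leq \exp\!\left[-(1-o(1))\vp(\gc-1)\,c\gL\right].
\]
Substituting the bound $\gL\geq(1-o(1))(1-\eps)^{\kappa-1}(\kappa/(\kappa-1))\log n$ (from $p>(1-\eps)p^*$) produces the claimed $n^{-(c-o(1))(\kappa/(\kappa-1))(1-\eps)^{\kappa-1}\vp(\gc-1)}$.

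I expect the main (mostly bookkeeping) obstacle to be the $\ov{\gD}\sim\mu$ verification: one has to check that overlapping pairs of $(\kappa-1)$-paths—both same-endpoint and cross-endpoint—contribute $o(\mu\asymp\log n)$, which in our near-threshold regime essentially amounts to redoing the path-overlap counting underlying Proposition~\ref{prop:tausig}.
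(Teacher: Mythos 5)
Your proposal is correct and follows essentially the same route as the paper's proof: apply Janson's lower-tail bound (Theorem~\ref{TJanson}) to the $(\kappa-1)$-paths in $K_n$ joining the $c$ prescribed pairs, pass from $\sigma$ to $\tau^{\kappa-1}$ via the event $\mathcal{S}$, verify $\ov{\gD}\sim\mu\sim c\gL$, and substitute the lower bound on $\gL$ from \eqref{eq:pgeq1-eps}. The only difference is cosmetic: the paper computes the off-diagonal contribution to $\ov{\gD}$ directly as $O(\gL^2/(np))=o(1)$ by noting two paths sharing $l$ edges share at least $l$ internal vertices, rather than deferring to the counting underlying Proposition~\ref{prop:tausig}.
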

\nin
(Recall $\varphi(x)$ was defined in \eqref{eq:varphidef}.)
Note the bound here is natural, being, for $p$ at the lower bound in
\eqref{eq:pgeq1-eps} (and up to the $o(1)$), what
Theorem~\ref{thm:Chernoff} would give for the probability that $c$ independent binomials,
each of mean $\gL $,
are all at most $\gamma \gL $.

\begin{proof}
Since $\mathcal{S}$ gives $\tau(x,y) \leq \sigma(x,y)+C <  (1+o(1))\gamma \gL $
for $\{x,y\}\in L(\gamma)$, the event in \eqref{eq:lightedgesJanson} implies
that $X: = \sum_{i \in [c]} \tau(x_i,y_i) < (1+o(1))c\gamma \gL $;
so we just need to bound the probability of this.

%\medskip
In the notation of Theorem \ref{TJanson}, with $A_1\dots A_m$ the edge sets
of the various $x_iy_i$-paths (in $K_n$), we have $\mu\sim c\gL $ and
$\ov{\gD} =\mu+O(\gL ^2/(np))\sim \mu$.
(If two of our paths, say $P$ and $Q$, share $l\in [1,\kappa-2]$ edges, then at least $l$ internal
vertices of $P$ are vertices of $Q$; so the contribution of such pairs to $\ov{\gD}$
is less than
\[
c^2 n^{2(\kappa-2)-l}p^{2(\kappa-1)-l} = O(\gL ^2/(np)) =o(1)
\]
(using the upper bound in \eqref{eq:pgeq1-eps} for the $o(1)$)).
Thus Theorem \ref{TJanson} gives
\[
\Pr(X < (1+o(1))c\gamma \gL ) < \exp\left[-(1-o(1))\varphi(\gamma-1)c\gL \right],
\]
which, since
$\gL  > (1-\eps)^{\kappa-1}(\kappa/(\kappa-1)) \log n$, is less than
the r.h.s. of \eqref{eq:lightedgesJanson}.
\end{proof}

\begin{prop}\label{Q's}
W.h.p.
\beq{Q's1}
\mbox{if
$Q_1 \sim Q_2 \sim Q_3 \sim Q_4$ are $C_\kappa$'s of $G$ then
$|(\cup Q_i)\cap L(\zeta)|\leq 1$}.
\enq
Also, there is a fixed $M$ such that w.h.p.
\beq{Q's2}
|S(e)\cap L(1-\zeta)| < M
~~~\forall \, e \in G.
\enq
\end{prop}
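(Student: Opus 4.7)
The plan is to reduce both \eqref{Q's1} and \eqref{Q's2} to union bounds over small configurations, with each summand controlled by a hybrid of Harris' and Janson's inequalities that is essentially Proposition~\ref{prop:lightedgesJanson} enhanced with a prescribed subgraph of $G$. The generic template: suppose the bad event forces an embedding $\phi: H \to K_n$ of some fixed-size graph $H$ together with $c$ distinguished edges $\{x_i,y_i\}_{i\in [c]}$ belonging to $L(\gc)$. Conditioning on $\{\phi(H)\sub G\}$ (probability $p^{e_H}$) leaves the remaining edges as independent Bernoullis, so Janson applied to the $(\kappa-1)$-paths joining the $\{x_i,y_i\}$'s through $K_n\sm\phi(H)$, with the w.h.p.\ event $\mathcal{S}$ used to swap $\gs$ for $\tau$ exactly as in the proof of Proposition~\ref{prop:lightedgesJanson}, yields
\[
\Pr(\mathcal{S}\wedge\{\phi(H)\sub G\}\wedge\{\{x_i,y_i\}\in L(\gc)~\forall i\})\leq p^{e_H}\cdot n^{-(c-o(1))\Phi(\gc)},
\]
with $\Phi(\gc):=(\kappa/(\kappa-1))(1-\eps)^{\kappa-1}\varphi(\gc-1)$. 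Multiplying by the $O(n^{v_H})$ embedding count, the contribution from $H$ is $O(n^{v_H-\frac{\kappa-2}{\kappa-1}e_H})\cdot \log^{O(1)} n \cdot n^{-(c-o(1))\Phi(\gc)}$.

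For \eqref{Q's1} I apply this with $H=\cup Q_i$, two designated light edges, $c=2$ and $\gc=\gz$. The structural observation is that a single $C_\kappa$ contributes $v_H-\frac{\kappa-2}{\kappa-1}e_H=\kappa/(\kappa-1)$, and appending a successor cycle sharing $j\geq 1$ edges with the chain so far changes this quantity by $(1-j)/(\kappa-1)\leq 0$; so $n^{v_H}p^{e_H}=O(n^{\kappa/(\kappa-1)}\log^{O(1)} n)$ uniformly in the chain. It then suffices that $2(1-\eps)^{\kappa-1}\varphi(\gz-1)>1$, which holds for small enough $\eps$ since $\varphi(-.99)>.9$; this is (one of) my constraint(s) on $\eps$ at the start of the subsection.

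For \eqref{Q's2}, a violation provides an edge $e$, distinct $f_1,\dots,f_M\in S(e)\cap L(1-\gz)$, and, for each $i$, a $\kappa$-cycle $C_i\ni e,f_i$. Taking $H=e\cup \bigcup_i C_i$, the extremal (for us) case is all $C_i$'s sharing only $e$: then $v_H=2+M(\kappa-2)$ and $e_H=1+M(\kappa-1)$, giving $v_H-\frac{\kappa-2}{\kappa-1}e_H=\kappa/(\kappa-1)$ once more, and additional sharing only decreases this (same $(1-j)/(\kappa-1)$ calculation). The template yields total probability $O(n^{\kappa/(\kappa-1)+o(1)})\cdot n^{-(M-o(1))\Phi(-\gz)}$, and since $\varphi(-\gz)>0$ (albeit small for $\gz=.01$, about $5\times 10^{-5}$) a sufficiently large fixed $M$ makes this $o(1)$.

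The main difficulty is the clean combination of $\mathcal{S}$, the increasing event $\{\phi(H)\sub G\}$, and the decreasing event ``specified edges in $L(\gc)$'' without paying $\Pr(\mathcal{S}^c)=o(1)$ inside each summand (which would be fatal when summing over the $n^{v_H}$ embeddings). The cleanest route is to pull $\Pr(\mathcal{S}^c)$ out as a single additive error outside the union bound, and then, for each term, exploit the independence (after conditioning on $\{\phi(H)\sub G\}$) of the $H$-internal and external edges, using $\mathcal{S}\wedge\{\{x_i,y_i\}\in L(\gc)\}\Rightarrow \tau(x_i,y_i)<(1+o(1))\gc\gL$ to reduce to Janson on $P_{\kappa-1}$-counts in the residual random graph on $V\sm V(\phi(H))$. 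The exponent bookkeeping is elementary but needs to be done uniformly across isomorphism types of $H$.
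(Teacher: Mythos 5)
Your proposal is correct and follows the same strategy as the paper: a Janson lower-tail estimate in the spirit of Proposition~\ref{prop:lightedgesJanson}, decoupled from the presence of the fixed subgraph $J=\phi(H)$, followed by a union bound over configurations controlled by the same $n^{v_H}p^{e_H}$ exponent bookkeeping (including the observation that $v_H-\tfrac{\kappa-2}{\kappa-1}e_H\leq\kappa/(\kappa-1)$ for these cycle unions, and the same constraints on $\eps$ and on $M$). The only real divergence is how you obtain the product $\Pr(J\sub G)\cdot(\text{Janson bound})$: the paper applies Harris' inequality, using that $\mathcal{S}\wedge\{\cdot\in L(\gc)\}$ implies the decreasing event $\{\tau(\cdot)<(1+o(1))\gc\gL\}$, whereas you condition on $\{J\sub G\}$ and run Janson on the path counts in the residual graph; these are interchangeable technical devices here (and your conditioning route sidesteps having to note that $\mathcal{S}$ itself is not monotone), so the argument is the same in substance.
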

\nin (Note the $Q_i$'s in \eqref{Q's1} need not be distinct.)

\begin{proof}
Write $\eta_\gc$ for the quantity
$
n^{-(1-o(1)) (\kappa/(\kappa-1)) (1-\eps)^{\kappa-1} \varphi(\gamma-1)}
$
appearing in \eqref{eq:lightedgesJanson} (here without the $c$).

Since $\mathcal{S}$ occurs w.h.p.,
it suffices to show that the probability that it holds while either
\eqref{Q's1} or \eqref{Q's2} fails is $o(1)$.
Thus in the case of \eqref{Q's1} we want to bound the probability
that $
%\R(J):=
\mathcal{S}  \wedge  \{J \subseteq G\}  \wedge  \{|J \cap L(\zeta)| \geq 2\}$
holds for some $J\sub K_n$ of the form $\cup_{i \in [4]} Q_i$, where the $Q_i$'s are $C_\kappa$'s
sharing edges as appropriate.  With $\T(J)=\sss\wedge \{|J\cap L(\gz)|\geq 2\}$,
this probability is at most
\begin{align*}
\mbox{$\sum\Pr(\{J\sub G\}\wedge\T(J))$}~
&\leq~
\mbox{$\sum \Pr(J \subseteq G) \Pr(\T(J))~$}\\
&\leq ~\mbox{$O(n^{4\kappa-6}p^{4\kappa-3}\eta_\gz^2) =o(1).$}
\end{align*}
Here the first inequality is an instance of Theorem~\ref{thm:Harris}
(since $\{J \subseteq G\}$ and
$\T(J)$ are increasing and decreasing respectively),
Proposition~\ref{prop:lightedgesJanson} gives
$\Pr(\T(J)) =O(\eta_\gz^2)$ (for any $J$),
and the $o(1)$ holds (for small enough $\eps$)
since $n^{4\kappa-6}p^{4\kappa-3}=\tilde{\Theta}(n^{\kappa/(\kappa-1)})$.
The argument for
\beq{sumJG}
\sum\Pr(J\sub G) = O(n^{4\kappa-6}p^{4\kappa-3})
\enq
is similar to the proof of
Proposition~\ref{prop:tausig}; briefly:
if $Q_1\dots Q_4$ are $C_\kappa$'s, with
$R_i=\cup_{j\leq i}Q_j$ and, for $i\geq 2$,
$|E(Q_i)\sm E(R_{i-1})|=b_i\leq\kappa-1$ and
$|V(Q_i)\sm V(R_{i-1})|=a_i$,
then
$n^{a_i}p^{b_i}\leq \gL$
for $i\geq 2$
(since $b_i=a_i=0$ or $b_i\geq a_i+1$), which gives
$
n^{|V(R_4)|}p^{|E(R_4)|} \leq n^2p\gL^4
$
and \eqref{sumJG}.

\medskip
Treatment of \eqref{Q's2} is similar. Here $J$ runs over subsets of $K_n$
of
the form $\cup_{i \in [M]} Q_i$, where the $Q_i$'s are $C_\kappa$'s with a common edge,
and, with $\T(J) =\sss\wedge \{|J \cap L(1-\zeta)| \geq M\}$,
the probability that $\sss$ holds while \eqref{Q's2} fails is at most
\[
\mbox{$\sum \Pr(\{J \subseteq G\} \wedge \T(J)) $}
~\leq ~O(n^2p \gL^M\eta_{1-\gz}^M) ~=~o(1).
\]
This is shown as above, with
$n^{|V(J)|}p^{|E(J)|} \leq n^2p\gL^M$
given by the passage following \eqref{sumJG}
(with $M$ in place of 4)
and the $o(1)$ valid for large enough $M$ because
$n^2p \gL ^M < n^{\kappa/(\kappa-1)}O(\log^{M/(\kappa-1)}n)$.
\end{proof}

The next assertion is the only place where we use the condition
$\{G \in \Q\}$ of \eqref{eq:Claim2statement'} (and \eqref{eq:ETS1}).

\begin{prop}
\label{prop:F.01}
W.h.p.
\beq{.01}
% \Pr((G \in \Q) \; \wedge \; (|F(\zeta)| > \gd |F|)) \ra 0.
G \in \Q \; \Longrightarrow \; |F(\zeta)| = o( |F|).
\enq
\end{prop}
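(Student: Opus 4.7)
I would combine Proposition~\ref{Q's} (especially \eqref{Q's1}) with Proposition~\ref{prop:lightedgesJanson} and \eqref{xyF}, exploiting $G\in\Q$ and $F\in\cee_\kappa^\perp(G)$ to pair each $F(\zeta)$-edge with a partner in $F\sm F(\zeta)$.

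\emph{Step 1 (no $C_\kappa$ contains two $L(\zeta)$-edges).} First I would upgrade \eqref{Q's1} to the sharper statement: w.h.p.\ every $C_\kappa$ $Q$ of $G$ has $|Q\cap L(\zeta)|\leq 1$. A direct first-moment calculation (using Proposition~\ref{prop:tausig} for $l=\kappa-1$ to translate $\sigma\leq 1$ into $\tau\leq C+1$, and bounding the expected number of copies of $C_\kappa$ in $G$ in which every edge's endpoints have $\tau\leq C+1$) rules out any $C_\kappa$ all of whose $\kappa$ edges lie only in that one $C_\kappa$; so every $C_\kappa$ $Q$ of $G$ admits some $Q'\sim Q$. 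Taking $Q_1=Q_3=Q$ and $Q_2=Q_4=Q'$ yields a chain $Q_1\sim Q_2\sim Q_3\sim Q_4$, and \eqref{Q's1} forces $|(Q\cup Q')\cap L(\zeta)|\leq 1$, hence $|Q\cap L(\zeta)|\leq 1$.

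\emph{Step 2 (partnering, and a lower bound on $|F|$).} For each $xy\in F(\zeta)$, $\Q$ provides a $C_\kappa$ $Q\ni xy$, and $F\in\cee_\kappa^\perp(G)$ forces some $e\in Q\cap F$ with $e\neq xy$. By Step 1, $e\notin L(\zeta)$, so $e\in F\sm F(\zeta)$. In particular $F\sub L(\zeta)$ is impossible when $F\neq\0$ (the partner $e$ would have to lie in $F\sm L(\zeta)=\0$), so some $xy\in F$ satisfies $\sigma(x,y)\geq\zeta\Lambda$, and \eqref{xyF} gives
\[
|F|\geq \sigma(x,y)+1\geq \zeta\Lambda+1=\Omega(\log n).
\]

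\emph{Step 3 (absolute bound on $|F(\zeta)|$).} Since $F\sub G$, it suffices to show $|G\cap L(\zeta)|=o(\Lambda)$ w.h.p. For this I would apply Proposition~\ref{prop:lightedgesJanson} in a higher-moment / union-bound form: it bounds the probability that a prescribed $c$-tuple of pairs all lie in $L(\zeta)$ by $n^{-cc'}$ with $c'=(\kappa/(\kappa-1))(1-\eps)^{\kappa-1}\varphi(\zeta-1)$, and combined with $\Pr(xy\in G)\leq p$ and summing over $c$-tuples yields an exponentially decaying bound on $\E[\tbinom{|G\cap L(\zeta)|}{c}]$ for $c$ exceeding a threshold. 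Together with the bound $|F|\geq\zeta\Lambda$ from Step 2, this gives $|F(\zeta)|\leq |G\cap L(\zeta)|=o(\Lambda)=o(|F|)$.

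\emph{Main obstacle.} Step 3 is the delicate point. With $\eps$ and $\zeta=.01$ fixed, the per-pair decay in Proposition~\ref{prop:lightedgesJanson} is only a small positive power of $n^{-1}$, so a naive first moment estimate on $|G\cap L(\zeta)|$ is polynomial in $n$, not obviously $o(\log n)$. Overcoming this requires either exploiting the Step 1 structural restriction (so that $L(\zeta)$-edges are genuinely isolated within $F$, giving better than $|S(e)\cap F(\zeta)|\leq M$-type bounds) or taking $\eps$ sufficiently small at the start of Section~\ref{subsec:Lemma6.1}; this is the reason for the early commitment to a small $\eps$.
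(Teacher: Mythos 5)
Your Steps~1 and~2 largely track the paper's opening moves (the paper reads \eqref{Q's1} as allowing degenerate chains $Q_1=Q_2=Q_3=Q_4$, so your ``upgrade'' is really what the cited statement already gives; the partner-existence argument is unnecessary), but Step~3 is where the proposal breaks down, and your own ``main obstacle'' paragraph correctly senses trouble without resolving it. The bound you are after, $|G\cap L(\zeta)|=o(\Lambda)=o(\log n)$, is simply false: with $\zeta=.01$ fixed, $\varphi(\zeta-1)\approx .944<1$, so Proposition~\ref{prop:lightedgesJanson} gives a per-pair probability of roughly $n^{-(\kappa/(\kappa-1))\cdot .944}$, and multiplying by $\binom n2 p\asymp n^{\kappa/(\kappa-1)}\log^{1/(\kappa-1)}n$ leaves $\E|G\cap L(\zeta)|=n^{\Omega(1)}$. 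Taking $\eps$ small cannot fix this, since $(1-\eps)^{\kappa-1}\varphi(\zeta-1)<\varphi(\zeta-1)<1$ for every $\eps>0$. Typically $|G\cap L(\zeta)|$ is polynomially large, vastly exceeding $\Lambda$, so the chain $|F(\zeta)|\leq|G\cap L(\zeta)|=o(\Lambda)\leq o(|F|)$ collapses at the middle inequality.

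The paper avoids this entirely by never bounding $|F(\zeta)|$ absolutely; it bounds it \emph{relative to} $|F|$ via a disjointness argument in $F$ itself. Concretely, for each $e\in F(\zeta)$, the partner $xy\in Q\cap F$ produced in your Step~2 has $\sigma(x,y)\geq\zeta\Lambda$, and each of $\zeta\Lambda$ internally disjoint $xy$-paths closes to a $C_\kappa$ containing a further $F$-edge (even intersection), yielding $|T(e)\cap F|\geq\zeta\Lambda$ where $T(e)=\{g:e\approx g\}$ as in \eqref{esimf2}. A second application of \eqref{Q's1} shows $T(e)\cap T(f)=\0$ for distinct $e,f\in F(\zeta)$ (a shared $g$ would give a 4-chain containing both $e$ and $f$ in $L(\zeta)$). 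Summing, $|F(\zeta)|\cdot\zeta\Lambda\leq|F|$, so $|F(\zeta)|\leq|F|/(\zeta\Lambda)=o(|F|)$ since $\Lambda\to\infty$. You gesture at ``exploiting the Step~1 structural restriction,'' which is the right instinct, but the key missing idea is the introduction of the two-step relation $\approx$ and the packing of disjoint sets $T(e)\cap F$ \emph{inside} $F$, so that the growing factor $\Lambda$ appears as a gain rather than a target.
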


\begin{proof}
By the first part of Proposition~\ref{Q's} it is enough to show that
%the combination of $\{G \in \Q\}$ and \eqref{Q's1} implies $|F(\zeta)|=o(|F|)$
the r.h.s. of \eqref{.01} follows (deterministically) from
the conjunction of $\{G \in \Q\}$ and \eqref{Q's1}.
But these imply that $|T(e) \cap F| \geq \zeta \gL $ for each $e\in F(\gz)$:
$\{G \in \Q\}$ gives at least one $C_\kappa$ containing $e$;
this $C_\kappa$ contains a second edge, $xy$, of $F$ (since $F\in \cee^\perp_{\kappa}$), which
by \eqref{Q's1} is not in $L(\gz)$; and $T(e)$ contains at least $\zeta \gL $
(distinct) $F$-edges lying
on $xy$-paths.
Moreover, again by \eqref{Q's1},
%$T(e) \cap F(\gz)=\{e\}$ $\forall e\in F(\gz)$ and
$T(e)\cap T(f)=\0 $ for distinct $e,f\in F(\zeta)$.
Thus $|F(\zeta)|< |F|/(\zeta \gL )$ ($=o(|F|)$), as desired.
\end{proof}

\begin{prop}
\label{prop:F.99}
W.h.p.
\beq{.99}
|F(1-\zeta)\sm F(\zeta)| = o(|F|).
\enq
\end{prop}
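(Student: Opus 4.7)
The plan is to prove (\ref{.99}) by a double-counting argument that parallels the proof of Proposition~\ref{prop:F.01}, using the second part of Proposition~\ref{Q's} in place of the first. Throughout I would assume the conclusion of Proposition~\ref{Q's} holds, so in particular there is a fixed $M$ with $|S(e)\cap L(1-\zeta)|<M$ for every $e\in G$.

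For each $e=xy\in F(1-\zeta)\setminus F(\zeta)$, the definition forces $\sigma(x,y)\geq \zeta\Lambda$, so $G$ contains $\zeta\Lambda$ internally disjoint $(\kappa-1)$-paths joining $x$ and $y$. Closing each such path with $e$ produces a $C_\kappa$ in $G$, and since $xy\in F\in\cee^\perp_\kappa(G)$ each of these paths must contain an odd (in particular, nonzero) number of $F$-edges. Since $\kappa\geq 5$, two internally disjoint $(\kappa-1)$-paths share no edge at all, so this yields at least $\zeta\Lambda$ distinct $F$-edges lying in $S(e)$. By our assumed instance of Proposition~\ref{Q's}, at most $M$ of them lie in $L(1-\zeta)$, so at least $\zeta\Lambda-M$ of them belong to $F\setminus F(1-\zeta)$.

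I would then double-count pairs $(e,f)$ with $e\in F(1-\zeta)\setminus F(\zeta)$, $f\in F\setminus F(1-\zeta)$, and $f\in S(e)$. The above yields the lower bound $|F(1-\zeta)\setminus F(\zeta)|(\zeta\Lambda-M)$. For the upper bound, for each fixed $f$ the admissible partners $e$ must satisfy $e\in S(f)$ and $e\in L(1-\zeta)$ (the latter because $e\in F(1-\zeta)$), so Proposition~\ref{Q's} applied to $f$ gives fewer than $M$ of them; hence the total is at most $M\,|F\setminus F(1-\zeta)|\leq M|F|$.

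Comparing the two bounds gives
\[
|F(1-\zeta)\setminus F(\zeta)|\;\leq\;\frac{M|F|}{\zeta\Lambda-M},
\]
which is $o(|F|)$ because $\Lambda\geq (1-\eps)^{\kappa-1}(\kappa/(\kappa-1))\log n\to\infty$, so $\zeta\Lambda-M\to\infty$ with $M,\zeta$ fixed. Essentially all of the real work has already been done in Proposition~\ref{Q's}; the only thing to get right here is the incidence structure linking a ``medium-$\sigma$'' edge $e$ to the $F$-edges supplied by its many internally disjoint paths, which I expect to be the most delicate step only in that one must verify the distinctness of those $F$-edges (using internal disjointness and $\kappa-1\geq 4$) so that the lower bound in the double count is legitimate.
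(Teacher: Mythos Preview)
Your argument is correct and is essentially the same as the paper's proof: the paper sets $B=F(1-\zeta)\setminus F(\zeta)$, considers the graph on $F$ with adjacency $\sim$ as in \eqref{esimf}, notes (with less justification than you give) that each $e\in B$ has degree at least $\zeta\Lambda$ while no vertex has more than $M$ neighbors in $B$ by \eqref{Q's2}, and concludes $|B|(\zeta\Lambda-M)\le |F\setminus B|M$. Your double-count of pairs $(e,f)$ is the same inequality, and you have in fact supplied the step the paper leaves implicit, namely that the $\zeta\Lambda$ internally disjoint $xy$-paths yield $\zeta\Lambda$ \emph{distinct} $F$-edges in $S(e)$ (which, incidentally, only needs $\kappa\ge3$, not $\kappa\ge5$, since any edge of a $P_{\kappa-1}$ with $\kappa\ge3$ meets an internal vertex).
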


\begin{proof}
It's enough to show that \eqref{Q's2} implies
\eqref{.99} (since Proposition~\ref{Q's} says \eqref{Q's2} holds w.h.p.).
This is again easy:
Set $B=F(1-\zeta)\sm F(\zeta)$ and consider the graph with vertex set $F$ and
adjacency as in \eqref{esimf}.
Each $e \in B$ has degree at least $\zeta \gL  $ in this graph,
while \eqref{Q's2} says
no vertex has more than $M$ neighbors in $B$.
Thus $|B| (\zeta \gL  - M) \leq |F \sm B|M$, which (since $\gL \gg 1$) gives \eqref{.99}.
\end{proof}

Combining Propositions \ref{prop:F.01} and \ref{prop:F.99}
completes part (a) of the preview following the statement of
Lemma~\ref{prop:Claim1}:
\begin{align}
\label{eq:F.99to3}
\mbox{w.h.p. $~~G \in \Q \; \;\Rightarrow \; \;
|F(1-\gz)| =o(|F|).$}
\end{align}

\mn
The next assertion, an echo of Section~\ref{subsec:Paths},
provides technical support for part (b)
(getting from \eqref{eq:F.99to3} to
Lemma~\ref{prop:Claim1}
by controlling non-$F$-central paths).

\medskip
For $v \in V$ and $S \sub\nabla_G(v)$, let $T_S(v)$
be the set of $C_\kappa$'s using two edges of $S$ and
$\tau_S(v)=|T_S(v)|$.
(We could write simply $T_S, \tau_S$, but keep the $v$ as a reminder).
%Let $h $ be some function of $n$ satifsying $1\ll h\ll \log(e/\gc)$.
%
\begin{prop}
\label{prop:Claim1C}
For each fixed $\theta>0$ there exists $C_\theta$ such that w.h.p.:
%There is a fixed $C$ such that w.h.p.:
for all $v \in V$ and $S \subseteq \nabla_G(v)$, with $|S|=\gc np$ and
$\mu = \gc^2n^{\kappa-1}p^{\kappa}/2$,
\beq{taucases}
\tau_S(v) < \left\{\begin{array}{ll}
(1+\theta)\mu &\mbox{if $\gamma > \gc_{\theta}:= C_\theta \log\log n / \log n$,} \\
o(\mu/\gamma) &\text{in general}.
\end{array}\right.
\enq
\end{prop}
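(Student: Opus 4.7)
The plan is to bound $\tau_S(v)$ uniformly over pairs $(v, S)$ via a union bound on $(v, N')$ with $N' = N_S(v)$, using Janson's upper tail (Lemma~\ref{JUB}) as the key concentration input. Fix $v \in V$ and $N' \subseteq V \setminus \{v\}$ of size $s$, and condition on $N' \subseteq N_G(v)$ (probability $p^s$); under this conditioning,
\[
\tau_S(v) \;=\; \sum_{\{u,w\}\subseteq N'}\tau^{\kappa-2}_{G-v}(u,w)
\]
depends only on the edges of $G-v$ and has conditional mean $\sim \mu = s^2 n^{\kappa-3}p^{\kappa-2}/2$ (matching the statement via $\gamma = s/(np)$).

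For the concentration, I would apply Lemma~\ref{JUB} to the events $A_P = \{P \subseteq G\}$, where $P$ ranges over the $(\kappa-2)$-paths in $K_n - v$ with both endpoints in $N'$; this yields
\[
\Pr\bigl(\sigma^*(v,N') > (1+\theta/2)\mu\bigr) \;\le\; \exp\bigl[-\mu\,\varphi(\theta/2)\bigr],
\]
where $\sigma^*(v,N')$ denotes the maximum number of pairwise edge-disjoint such paths in $G - v$. To pass from $\sigma^*$ to $\tau_S(v)$, bound the gap $\tau_S(v) - \sigma^*(v,N')$ by the number of edge-overlapping pairs of $N'$-paths in $G-v$; a case analysis of overlap patterns in the style of Proposition~\ref{prop:tausig} (which applies since $n^{2\kappa-7}p^{2\kappa-5} = n^{-\Omega(1)}$ for $p \asymp p^*$) shows this count has expected value $O(s^4 n^{-(\kappa+2)/(\kappa-1)}\log^{O(1)}n) = O(\mu \cdot n^{-\Omega(1)})$ for $s \le np$, so the gap is $o(\mu)$ with good probability.

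Summing the failure probability over $(v, N')$, weighted by $\Pr(N' \subseteq N_G(v)) = p^s$, reduces everything to controlling $\sum_{s}n\binom{n-1}{s}p^s\exp[-\Omega(\mu\varphi(\theta))]$. In Case~1 ($\gamma \ge \gamma_\theta := C_\theta\log\log n/\log n$), $\mu \asymp \gamma^2 np\log n$ and the Janson decay exceeds the entropy $\binom{n}{s}p^s \le \exp[\gamma np\log(e/\gamma)]$; at $\gamma = \gamma_\theta$ the ratio of the two exponents is $\sim C_\theta\varphi(\theta)$, so $C_\theta$ large enough (depending on $\theta$) suffices. In Case~2, for $s = O(\log n)$ the trivial bound $\tau_S(v) \le \binom{s}{2}\max_{u,w}\tau^{\kappa-2}(u,w) \le O(s^2) = o(s\log n) = o(\mu/\gamma)$ via Corollary~\ref{ubtau} already works, while for larger $s$ in Case~2 one has $(1+\theta)\mu = o(\mu/\gamma)$ by $\gamma = o(1)$, so the Janson-plus-gap bound yields the claim with slack. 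The main obstacle is the precise calibration of $C_\theta$ together with uniform control of the gap across all $(v, N')$: the Markov-type bound on the overlap count gives only polynomial decay per $(v, N')$, so promoting this to a bound that survives the union bound requires either a refined moment/concentration estimate on the overlap structures or a global appeal to the per-pair path-count bounds of Corollary~\ref{ubtau}; this calibration is precisely what forces the threshold $\gamma_\theta \sim \log\log n/\log n$ appearing in the statement.
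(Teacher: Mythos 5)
Your overall strategy is the right one and coincides with the paper's: condition on $S\sub G$, observe that $\tau_S(v)$ becomes a sum of $P_{\kappa-2}$-counts over pairs in $N_S(v)$, apply the Janson-type upper tail Lemma~\ref{JUB} to the maximum disjointly-occurring subfamily, control the gap between $\tau_S$ and that subfamily, and close with a union bound over $(v,S)$ weighted by $p^{|S|}$. Your Case-1 arithmetic (Janson exponent $\mu\varphi(\theta)$ vs.\ entropy $s\log(e/\gamma)$, with $\mu\asymp\gamma^2np\log n$ and $C_\theta$ large) is correct and does match where $\gc_\theta$ comes from in the paper.

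There are, however, two genuine problems. First, Case~2 has a real hole. Your ``trivial bound'' $\tau_S\leq\binom{s}{2}\max\tau^{\kappa-2}=O(s^2)$ is $o(\mu/\gamma)\asymp s\log n$ only for $s=o(\log n)$, not $s=O(\log n)$; and your ``Janson with slack'' at target $(1+\theta/2)\mu$ fails once $\gamma$ drops near or below $\gc_\theta$, because $\exp[-\mu\varphi(\theta/2)]$ no longer beats $\binom{n}{s}p^s$ (indeed for $s\asymp\log n$ one has $\mu=o(1)$ and the bound is vacuous). There is a whole range $\log n\lesssim s\lesssim\gc_\theta np$ covered by neither. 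The paper's fix is to abandon the fixed target $(1+\theta)\mu$ and apply Lemma~\ref{JUB} at a $\gamma$-\emph{dependent} level $K_\gamma\mu$ with $K_\gamma=(h\gamma)^{-1}$, $h=(\log\log n)^{1/2}$: this still gives $K_\gamma\mu=o(\mu/\gamma)$ (since $h\to\infty$), but the exponent becomes $K_\gamma\mu\log(K_\gamma/e)\asymp s\,\gL\,\log(1/\gamma)/(2h)$, which beats the entropy $s\log(1/\gamma)$ precisely because $\gL/(2h)\gg 1$. Without some such $\gamma$-dependent choice the union bound cannot be made to close in Case~2.

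Second, you correctly identify uniform gap control over all $(v,S)$ as the other serious obstacle, and correctly observe that Markov on the first moment of the overlap count gives only polynomial decay. But the paper does not resolve this by moving $\gc_\theta$; instead, Proposition~\ref{PlikeP316} proves outright that w.h.p.\ $\tau'_S(v)-\gs_S(v)<D\min\{|S|,\log^2 n\}$ for \emph{all} $v$ and all $S\sub\nabla_G(v)$ simultaneously, by (i) bounding component sizes of the overlap graph $\gG_S$ via the isomorphism-type argument of Proposition~\ref{prop:tausig} (it suffices to check $S=\nabla_G(v)$), and (ii) bounding induced-matching sizes after conditioning on $\nabla_G(v)=R$ and then summing over $S\sub R$ with the Erd\H{o}s--Tetali bound \eqref{ET}. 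The $\min\{|S|,\log^2 n\}$ form is exactly what is needed: for small $S$ the gap must be linear in $|S|$ because $\mu\propto|S|^2$ is tiny. So the threshold $\gc_\theta$ is \emph{not} a by-product of gap control, contrary to your concluding sentence; it comes only from the Case-1 Janson-vs-entropy calculation. One further simplification you miss, though it is not essential: the paper first discards the $O(1)$ cycles at $v$ that meet $N(v)$ more than twice, which makes the disjointness structure of the remaining cycles (and hence the definition of $\gs_S$, via vertices outside $\ov N(v)$) cleaner.
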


\begin{proof}
We first observe that there is a fixed $B$ such that w.h.p.
no $v$ lies in more than $B$
$C_\kappa$'s that meet $N(v)$ more than twice
(basically because---here we omit the routine details---the
expected number of such $C_\kappa$'s at a given $v$
is $O(n^{\kappa-1}p^{\kappa+1})=n^{-\gO(1)}$).
It is thus enough to prove Proposition~\ref{prop:Claim1C} with $T$ and $\tau$
replaced by $T'$ and $\tau'$, where
$T'_S(v)= \{Q\in T_S(v):|Q\cap N(v)|=2\}$ and $\tau_S'(v) = |T_S'(v)|$.

Here we use a reduction similar to the one given by Proposition~\ref{prop:tausig}
(though 
we can't expect to do quite as well as
in \eqref{eq:tausig}).
Let $\gs_S(v)$ be the maximum size of a collection of
$C_\kappa$'s from $T_S'(v)$ that are disjoint
outside $\ov{N}(v):=\{v\}\cup N(v)$.  Set
$\psi(S)=\min\{|S|, \log ^2n\}$.
\begin{prop}
\label{PlikeP316}
There exists D such that
w.h.p. for all $v$ and $S\sub \nabla_G(v)$, % with $|S|=\gc np$,
\beq{tausig2}
\tau_S'(v)-\gs_S(v) < D\psi(S).
\enq
\end{prop}

\begin{proof}
For fixed $v$ and $S \subseteq \nabla_G(v)$, let $\gG=\gG_S$ be the graph on $T_S'(v)$ with
$Q\sim R$ if $Q$ and $R$ share a vertex not in $\ov{N}(v)$.
Since
$\tau_S'(v)-\gs_S(v)\leq |E(\gG)|$,
\eqref{tausig2} holds (for a suitable $D$) provided
\begin{itemize}
\item[(i)] the sizes of the components of $\gG$ are $O(1)$ and
\item[(ii)] the sizes of the induced matchings of $\gG$ are $O(\psi(S))$;
\end{itemize}
so we would like to say that w.h.p.\ (i) and (ii) hold for all $v$ and $S$.
Here (and only here) we use $V(Q)$ for the set of vertices of $Q$ not in $\ov{N}(v)$.

Of course (i) holds for all $S$ (at $v$) iff it holds for $S=\nabla_G(v)$, so we just
consider this case.
Here we again (as in Proposition~\ref{prop:tausig})
want, for large enough $M$, (probable) nonexistence of
$Q_1\dots Q_M\in T'_S(v)$ such that, for
$i\geq 2$, $V(Q_i)$ meets, but is not contained in,
$\cup_{j<i}V(Q_j)$.
Arguing as for \eqref{nVRM} we find that the total numbers,
say $a$ and $b$, of vertices (other than $v$) and edges used by such
$Q_1\dots Q_M$ satisfy
\beq{napb'}
n^ap^b\leq  n^{\kappa-1}p^{\kappa}(n^{\kappa-3}p^{\kappa-2})^{M-1}.
\enq
(Note here we do count neighbors of and edges at $v$.
The bound says $n^ap^b$ is largest when each new $Q_i$ meets
what preceded it in a $P_2$ starting at $v$.)
Since $n^{\kappa-1}p^{\kappa} =\Theta(np\log n)$ and
$n^{\kappa-3}p^{\kappa-2} =\tilde{\Theta}(n^{-1/(\kappa-1)})$,
the bound in \eqref{napb'} is $o(1/n)$ for slightly large $M$, as is
the probability of seeing such $Q_i$'s at $v$.

For (ii), it will help to condition on $\nabla_G(v)$.
Using $\nu'$ for the maximum size of an induced matching
and invoking Proposition~\ref{prop:routine}, we find that
it's enough to show
that, for a given $v$, $R \sub \nabla(v)$ of size
less than $2np$ (say) and large enough $D$,
\beq{Prtau'v}
\Pr(\exists S\sub R, \nu'(\gG_{S}) >D \psi(S) \; | \; \nabla_G(v)=R) =o(1/n).
\enq
So assume we have conditioned on $\{\nabla_G(v)=R\}$, with $R$ as above.
An easy verification (again similar to those in the proof of Proposition~\ref{prop:tausig})
gives, for any $S\sub R$ and, again, $\gc_S=\gc$ and $\gG_S=\gG$),
\begin{align}
\notag
\tilde{\mu}= \tilde{\mu}_S :=\mathbb{E} |E(\gG)| &=
O(\Cc{|S|}{2}n^{\kappa-3}p^{\kappa-2}|S|n^{\kappa-4}p^{\kappa-3}) \\
&= %O(n^{\kappa-1}p^{\kappa}n^{\kappa-3}p^{\kappa-2}) =
O(\gamma^3 n^{2(\kappa-2)}p^{2(\kappa-1)})
= O(\gamma^3 \log^2n); \label{newlabel}
\end{align}
say $\tilde{\mu} < C\gamma^3 \log^2n$ (with $C$ fixed).
On the other hand,
with $\{Q_i,R_i\}$ the possible edges of $\gG$ and $A_i=\{Q_i\cup R_i\sub G\}$,
$\nu'(\gG)\geq l$ implies occurrence of some $l$ independent $A_i$'s,
an event whose probability \eqref{ET} bounds by
$(e\tilde{\mu}/l)^l$. 

This leaves us with the union bound arithmetic.
Here we first note that for $\nu'(\gG_S) < D\log^2n$ $\forall S$ we just need to check $S=R$,
for which,
in view of \eqref{newlabel}, we have $(e\tilde{\mu}/l)^l =o(1/n)$ for $l=D\log^2n$
with a suitable $D$ ($D> Ce$ is enough).
We then need to say (again, for suitable $D$) that with probability $1-o(1/n)$,
\beq{smallS}
\nu'(\gG_S) < D |S|  ~~\mbox{for all $S$ with $|S|< \log ^2n$.}
\enq
But with $s=\gc np$,
$\tilde{\mu} =\tilde{\mu_s} < C\gc^3\log ^2 n$
and
sums over $s\in [1,\log^2n]$,
the probability that \eqref{smallS} fails is at most
\[
\sum  \Cc{|R|}{s}\left(\tfrac{e\tilde{\mu}}{Ds}\right)^{Ds}
< \sum \exp\left[\gamma np\left\{\log(2e/\gamma) + D \log\left(\tfrac{Ce\gc^3\log^2n}
{D\gc np}\right)\right\}\right],
\]
which, since we are in the range $\gc np \in [1,\log^2n]$,
is easily $o(1/n)$.
%since for $s$ in this range, the argument of the second
%log is essentially $(\gc^2/np)=-\tilde{\gO}(1/\gc)$
\qedhere

\end{proof}

\medskip
We continue with the proof of Proposition~\ref{prop:Claim1C}, which,
by Proposition~\ref{PlikeP316}, we now need only prove with $\tau_S(v)$ replaced by
$\gs_S(v)$.
Here it will help to have a concrete $o(\cdot)$ in \eqref{taucases}.
Set $h=h(n)=(\log\log n)^{1/2}$
(we need $1\ll h\ll \log\log n$) and,
with $C_\theta$ (and thus $\gc_{\theta}$) TBA, set
\[
K_\gc =\left\{\begin{array}{ll}
1+\theta &\mbox{if $\gc >\gc_{\theta}$,}\\
(h\gc)^{-1}&\mbox{otherwise.}
\end{array}\right.
\]

\medskip
Given $v$ and $S\sub \nabla_{G}(v)$ of size $\gc np$
(so we condition on $\{S\sub G\}$), and writing $K$ for $K_\gc$,
we may apply Lemma~\ref{JUB} to obtain
\beq{Xbd}
\Pr(\gs_S(v)> K\mu) < \left\{\begin{array}{ll}
\exp[-\theta^2\mu/3]&\mbox{if $\gc >\gc_{\theta}$,}\\
\exp[- K\mu \log (K/e)] &\mbox{otherwise.}
\end{array}\right.
\enq         %\enq
Thus,
with $\xi_\gc$ denoting the appropriate bound in \eqref{Xbd},
the probability of violating the $\gs_S$-version of \eqref{taucases}
with an $S$ of size $\gc np$ is less than
\beq{gSbd}
n\Cc{n}{\gc np}p^{\gc np}\xi_\gc
< \exp[\log n + \gc np\log (e/\gc)]\cdot\xi_\gc
\enq
(where the terms preceding $\xi_\gc$ correspond to summing $\Pr(S\sub G)$
over $v\in V$ and $S\sub \nabla (v)$
of size $\gc np$).

Finally, we should make sure the bound in \eqref{gSbd} is small.
Recalling \eqref{eq:pgeq1-eps},
we have (for slightly small $\eps$)
$\gL > (1-\eps)^{\kappa-1}\kappa/(\kappa-1)\log n> \log n$ and
\beq{mu}
\mu ~~(= (\gc^2np/2)\gL)~
%> (1-\eps)^{\kappa-1}\kappa/(\kappa-1)\log n
> (\gc^2np/2) \log n.
\enq
Thus for $\gc>\gc_{\theta}$ the bound in \eqref{gSbd} is less than
\[
\exp[\gc np\cdot \{\log (e/\gc) - \theta^2\gc \log n/6\} +\log n],
\]
which is tiny ($\exp[-n^{\gO(1)}]$)
for fixed $C_\theta> 6\theta^{-2}$.

For $\gc\leq \gc_{\theta}$, noting that
$(\gc K_{\gc}/2)\log (K_{\gc}/e)\sim \log(1/\gc)/(2h) =\go(1)$
(and $\gc np\geq 1$), and again using \eqref{mu},
we find that
the r.h.s. of \eqref{gSbd} is less than
\[
\exp[\gc np \cdot\{\log (e/\gc) - (\gc K_{\gc}/2)\log (K_{\gc}/e) \log n\} +\log n]
~=~ n^{-\go(1)}.
\]
And of course summing these bounds over $\gc$ gives what we want.\qedhere

\end{proof}

\begin{proof}[Proof of Lemma~\ref{prop:Claim1}.] Fix $\theta = .005$
and let $C=C_\theta$ and $\gc_{\theta}$ be as in Proposition~\ref{prop:Claim1C}.
Set $\gamma_v = d_F(v)/(np)$, and let $\varphi_v$ be the number of $C_\kappa$'s of $G$ using
two $F$-edges at $v$.
Let $\sigma^*(x,y)$ be the number of $xy$-paths having $F$-edges at one or both of $x,y$.
Write $\sum'$ and $\sum''$ for sums over $v$ with $\gc_v>\gc_{\theta}$ 
and $\gc_v\leq\gc_{\theta}$ respectively.
We have, w.h.p.,
\begin{align}
\mbox{$\sum_{xy \in F} \sigma^*(x,y)$} &\leq \mbox{$2\sum_{v \in V} \varphi_v $}\nonumber\\
\label{eq:Claim1eq2}
&\leq \mbox{$n^{\kappa-1}p^{\kappa} \cdot [(1+\theta)\sum'\gamma_v^2
+ \sum''o(\gamma_v)],$}
\end{align}
where the first inequality comes from considering how many times each side counts the various
$C_\kappa$'s of $G$,
and the second is given by Proposition \ref{prop:Claim1C}.

Since $\sum \gamma_v = \alpha n$, the second sum in \eqref{eq:Claim1eq2} is $o(\ga n)$.
For the first,
let $B=\{ v \in V : \gamma_v > \theta\}$.
If we now assume
$\alpha = o(1)$ (as given by $\pee$), then we have $|B|=o(n)$; so
Proposition~\ref{density} (parts (a) and (b)) gives (w.h.p.)
\[
\mbox{$|G[B]| \ll |B|\theta np
<
\sum_{v \in B} d_F(v)  \leq \ga n^2p$},
\]
whence
$
\mbox{$\sum_{v \in B} \gamma_v np \leq 2|G[B]| +|\nabla_F(B)|
< (1+o(1))\ga n^2p/2$,}
$
\[
\mbox{$ \sum_{v \in B} \gamma_v<(1+o(1))\alpha n/2$}
\]
and
(recalling $d_F(v)\leq d_G(v)/2 ~\forall v$; see \eqref{dFdG})
\beq{ga/4}
\mbox{$ \sum_{v \in B} \gamma_v^2 \leq\max_v \gc_v \sum_{v\in B}\gc_v
< (1+o(1))\ga n/4.$}
\enq
Thus (since also $\sum_{v \in V \setminus B} \gamma_v^2\leq \theta \sum_v\gc_v =\theta\ga n$)
we find that the expression in square brackets in \eqref{eq:Claim1eq2} is less than
$ (1/4 + 2\theta )\alpha n,$
whence
\begin{align}
\label{eq:Claim1eq6}
\mbox{$\sum_{xy \in F} \sigma^*(x,y) \leq (1/4 + 2\theta )\ga n^{\kappa}p^{\kappa}
=.26\alpha n^\kappa p^\kappa.$}
\end{align}
(To avoid confusion we note that
the .26 here, which is more or less forced by the essentially tight bound
in \eqref{ga/4}, is unrelated to the one in \eqref{eq:Claim1statement}.)

\medskip
Now let $F^* = \{xy \in F : \sigma(x,y) \geq (1-\gz)\gL \}$
 ($= F\sm F(1-\gz)$).
By
\eqref{eq:F.99to3}, $|F^*| \sim \alpha n^2p/2$, w.h.p.\ provided $\Q$ holds.
Note that (recall $\gz =.01$)
$xy\in F^*$ has
$\sigma(x,y;F) > .26\gL$ (as in \eqref{eq:Claim1statement})
unless $\sigma^*(x,y) > .73\gL$.
(As noted earlier, $xy$-paths necessarily have odd intersection with $F$,
so the only real requirement for such a path to be central is that it have an internal edge in $F$.)
It thus follows from \eqref{eq:Claim1eq6} that for
$\tilde{F} := \{xy \in F^* :\sigma(x,y;F) \leq .26\gL\}$, we have
\[
|\tilde{F}| \leq \frac{.26\alpha n^\kappa p^\kappa}{.73\gL} \leq .36\alpha n^2p,
\]
whence $|F^* \setminus \tilde{F}| \geq .13\alpha n^2p$, implying \eqref{eq:Claim1statement}.
\end{proof}

\begin{proof}[Proof of Lemma~\ref{lem:Claim2}]
As mentioned earlier, Lemma~\ref{lem:Claim2} follows easily from Lemma~\ref{prop:Claim1}
{\em via} ``coupling down" (viewpoint (A) of Section~\ref{subsec:Coupling}):
it is enough to show that if $G$ satisfies the r.h.s. of \eqref{eq:Claim1statement} then
w.h.p.\ it also satisfies $\R$; that is,
$|F \cap R(F_0)| \geq .12\alpha n^2p$.

\medskip
For
$xy\in F':= \{xy\in F:\sigma(x,y;F) > .26\gL\}$
(see \eqref{eq:Claim1statement}),
Theorem~\ref{thm:Chernoff} gives
\[
\Pr(\sigma_0(x,y;F_0) \leq .25n^{\kappa-2}q^{\kappa-1}) < \exp[-\Omega(n^{\kappa-2}q^{\kappa-1})]
= n^{-\gO(1)},
\]
since members of a set of $\sigma(x,y;F)$ internally disjoint, $F$-central
$xy$-paths survive in $G_0$ (and become $F_0$-central) independently, each with probability
$\vt^{\kappa-1}$.
So by Markov's Inequality,
w.h.p.
\[
|\{xy\in F': \sigma_0(x,y;F_0) \leq .25n^{\kappa-2}q^{\kappa-1}\}|=o(|F'|).
\]
The lemma follows.
\end{proof}

\subsection{Proof of Lemma \ref{lem:InterruptedPathsBound}}
\label{subsec:Lemma6.2}

This is a simple consequence of Proposition~\ref{prop:spectrum}, but
for perspective a brief comment on the bounds may helpful.
The first bound---corresponding to a $\gb^2$-fraction of all
$P_t$'s having their ends in $S$---is the generic value,
and will be the truth if $\q$ is large enough that (w.h.p.) all $\tau^{t-2}(x,y)$'s
are about the same.  For smaller $\q$ one can sometimes do better
by, e.g.\ (for even $t$), taking $S$ to consist of all edges at distance $t/2-1$ from
some small set of ``centers," producing something like the second bound.

\begin{proof}
Let $\lambda_1 \geq \lambda_2 \geq \ldots \geq \lambda_n$ be the eigenvalues of the
adjacency matrix, $A$, of $G_0$, with associated
orthonormal eigenvectors $v_1, v_2, \ldots, v_n$, say with $\max_j v_{1,j} > 0$. Let $M=A^{t-2}$
(so $M$ has eigenvalues $\gl_i^{t-2}$ ($i\in [n]$), with eigenvectors $v_i$),
and $f=( d_S(x) : x\in V) =\sum\gb_iv_i$.

The number of $(S,t)$-ropes may w.h.p. be bounded by
\begin{align}
fMf^T &= \sum \gl_i^{t-2}\gb_i^2  \notag \\
&\leq \gl_1^{t-2} \gb_1^2 + \max\{\gl_2,|\gl_n|\}^{t-2} \|f\|_2^2 \notag \\
\label{eq:Stropecount}
&< (1+o(1))[(n\q)^{t-2} \gb_1^2 + (4n\q)^{(t-2)/2} \|f\|_2^2],
% &= & (1+o(1))a^2n^{t+1}\q^t + O(a n^{t/2+2} \q^{t/2+1}).
\end{align}
where we used $\sum \gb_i^2=\|f\|_2^2$ and
the second inequality is given by \eqref{eq:eigvals}.
We then need bounds on $\beta_1^2$ and $\|f\|_2^2$,
both of which are easy:  w.h.p.
\[
\mbox{$\gb_1=\langle f,v_1\rangle \sim n^{-1/2} \sum d_S(v) = 2n^{-1/2}|S|
= \gb n^{3/2}\q$}
\]
(using \eqref{eq:eigvecv1}) and
\[
\|f\|_2^2 = \sum d_S^2(x) \leq \gD_S \sum_xd_S(x)
< (1+o(1))nq\cdot 2|S|
 \sim \gb n^3\q^2.
\]
The lemma follows.\end{proof}

\section{Proof of Theorem~\ref{thm:Hspace}}
\label{SEC:ThmHspace}

In what follows we set $\eee(K_n)=\eee$, $\cee_H(K_n)=\cee_H$ and so on.
We prove (sketchily) Theorem~\ref{thm:Hspace} for $n\geq v_H+2$---which is best possible
e.g.\ if $H=K_\kappa$ with $\kappa\geq 4$ (e.g. since for $n\leq \kappa +1$,
$\cee_H^\perp\supseteq \cee\cap \D$)---and
add a note at the end to cover $H=C_\kappa$ and $n\geq \kappa$.

We first note that $\cee_H =\eee$ if $|H|=1$ (trivially) and
$\cee_H =\D$ if $|H|=2$.
(Since each of $P_2$, $2K_2$ (a 2-edge matching) is the sum of two copies of the other,
the copies of an $H$ of size 2 span all 2-edge subgraphs, and so all even subgraphs, of $K_n$.)
Moreover, if $H$ is a matching then $\cee_H$ is easily seen to contain (all copies of)
$K_2$ if $|H|$ is odd or $2K_2$ if $|H|$ is even, so is equal to $\eee$ or $\D$ as appropriate.

We may thus restrict attention to $H$ containing a vertex $x$  of degree at least 2,
and observe that in this case $\cee_H\supseteq\cee_4$.
(The sum of two copies of $H$ that differ only in the copy of $x$ is a $K:=K_{2,d(x)}$,
and repeating this with $K$ and one of its divalent vertices produces a $C_4$.)

Since $\cee_4 = \cee \cap \D$, we're done if $H$ is even Eulerian.
Otherwise let $\tilde{H}$ be a copy of $H$ in $K_n$ and $F$ a smallest element 
of $\tilde{H}+\cee_4$. Then $F$ clearly belongs to the same case of \eqref{eq:Hspace} as $H$ and 
we claim it is either a triangle or 
the disjoint union of a matching and star (so possibly just a matching or just a star). 
Note this is enough, as the copies of 
$F$ are then easily seen to generate the desired subspace of $\eee$: 
if $H$ is Eulerian then $F=K_3$; 
otherwise we may add two copies of $F$ to produce a $P_2$, so the generated space contains $\D$.  
(Minor note:  $|V(F)|\leq |V(H)|+1$ since all odd vertices of $F$
must also be odd in $\tilde{H}$.)

For the claim we observe that $F$ cannot contain
a $P_3$ 
(since adding a $C_4$ containing such a $P_3$ reduces $|F|$); 
disjoint $P_2$'s (reduce by adding a $C_6$); or $K_3+K_2$ (convert to $P_4$, then
reduce to $P_2$).

\mn

Finally, for $H=C_\kappa$ and $n\geq \kappa\geq 4$
(for $\kappa =3$ there is nothing to show),
it is enough to observe that the sum of two copies of $H$ on the same vertex set
and sharing a $P_{\kappa -3}$ is a $C_4$;
so $\cee_H=\cee\cap \D$ if $\kappa$ is even, while for odd $\kappa$,
$\cee\cap \D\subset \cee_H\sub \cee$ implies $\cee_H= \cee$.

% BIBLIOGRAPHY

% https://verbosus.com/bibtex-style-examples.html -- EXAMPLE BIBTEX ENTRIES, or see
% https://en.wikibooks.org/wiki/LaTeX/Bibliography_Management#Standard_templates
% to compile: pdflatex bibtex pdflatex pdflatex

% \nocite{*}  % to include all sources in References list--DO NOT DO THIS IN FINAL VERSION
\bibliographystyle{plain}
\bibliography{Allrefs.bib}

\iffalse

\fi

\end{document}